\numberwithin{equation}{section}
 \newtheorem{assumption}{Assumption}[section]
\newtheorem{lemma}{Lemma}[section]
\newtheorem{theorem}{Theorem}[section]
\newtheorem{coro}{Corollary}[section]
\newtheorem{prop}{Proposition}[section]
\newtheorem{remark}{Remark}[section]
\newlength{\defbaselineskip}
\newcommand{\setlinespacing}[1]%
           {\setlength{\baselineskip}{#1 \defbaselineskip}}
\newcommand{\RR}{{\mathbb R}}
\newcommand{\ZZ}{{\mathbb Z}}
\newcommand{\NN}{{\mathbb N}}
\newcommand{\mfa}{\mathfrak{a}} 
\newcommand{\mfi}{\mathfrak{i}} 
\def\E{\mathbb{E}}
\def\P{\mathbb{P}}
\newcommand{\cT}{{\mathcal{T}}}
\newcommand{\sT}{{\mathfrak{T}}}
\newcommand{\sF}{{\mathcal{F}}}
\newcommand{\cI}{{\mathcal{I}}}
\newcommand{\sI}{{\mathfrak{I}}}
\newcommand{\sS}{{\mathcal{S}}}
\newcommand{\beql}[1]{\begin{equation}\label{#1}}
\newcommand{\eeq}{\end{equation}}
\newcommand{\beqal}[1]{\begin{eqnarray}\label{#1}}
\newcommand{\eeqa}{\end{eqnarray}}
\newcommand{\beq}{\begin{displaymath}}
\newcommand{\eeqno}{\end{displaymath}}
\newcommand{\bali}[1]{\begin{align}\label{#1}}
\newcommand{\eali}{\begin{align}}
\newcommand{\balino}{\begin{align*}}
\newcommand{\ealino}{\begin{align*}}
\newcommand{\ep}{\epsilon}
\newcommand{\Var}{\text{\rm Var}}
\newcommand{\R}  {\mathbb{R}}
\newcommand{\bD}{{\mathbf D}}
\newcommand{\bC}{{\mathbf C}}
\newcommand{\bone}{{\mathbf 1}}
\newcommand{\qandq}{\quad\mbox{and}\quad}
\newcommand{\qforq}{\quad\mbox{for}\quad}
\newcommand{\qasq}{\quad\mbox{as}\quad}
\newcommand{\non}{\nonumber}
\newcommand{\baa}{\begin{eqnarray*}}
\newcommand{\eaa}{\end{eqnarray*}}
\newcommand{\ttl}{\Large  
Spatially dense stochastic epidemic models  \\ [5pt] with infection-age dependent infectivity
}
\begin{document}

\title[FLLN for spatially dense  non-Markovian epidemic models ]{\ttl}

\author[Guodong \ Pang]{Guodong Pang$^*$}
\address{$^*$Department of Computational Applied Mathematics and Operations Research,
George R. Brown School of Engineering,
Rice University,
Houston, TX 77005}
\email{gdpang@rice.edu}

\author[{\'E}tienne \ Pardoux]{{\'E}tienne Pardoux$^\dag$}
\address{$^\dag$Aix Marseille Univ, CNRS,  I2M, Marseille, France}
\email{etienne.pardoux@univ.amu.fr}

\date{\today}

\begin{abstract} 
We study an  individual-based stochastic spatial epidemic model where the number of locations and the number of individuals at each location both grow to infinity.
Each individual is associated with a random  infection-age dependent infectivity function.
Individuals are infected through interactions across the locations  with heterogeneous effects. 
The epidemic dynamics can be described using a time-space representation for the the total force of infection, the number of susceptible individuals, the number of  infected individuals that are infected at each time and have been infected for a certain amount of time, as well as the number of recovered individuals.
We prove a functional law of large numbers for these time-space processes, and in the limit, we obtain a set of time-space integral equations. 
We then derive the PDE models from the limiting  time-space integral equations, in particular, the density (with respect to the infection age) of the time-age-space integral equation for the number of infected individuals tracking the age of infection satisfies a linear PDE in time and age with an integral boundary condition. These integral equation and PDE limits can be regarded as dynamics on graphon under certain conditions.

\end{abstract}

\keywords{spatially dense epidemic model, infection-age dependent infectivity,  functional law of large numbers, time-space integral equations, PDE model}

\maketitle



\allowdisplaybreaks

 \section{Introduction}

In order to capture the geographic heterogeneity, spatial epidemic models have been well developed,  both in  discrete and continuous spaces.
In discrete space, multi-patch epidemic models have been studied in \cite{sattenspiel1995structured,arrigoni2002limits,allen2007asymptotic,xiao2014transmission,bichara2018multi,nzipardouxyeo} and recently by the authors \cite{PP-2020b},  where each patch represents a geographic location, and infection may occur within each patch and from the distance (for example, due to short travels). See also the multi-patch multi-type epidemic models in \cite{bichara2018multi,FPP2021-MPMG}, as well as relevant models in  \cite{ball2008network,magal2016final,magal2018final}. 
Some of these studies assume migration of individuals among different patches \cite{sattenspiel1995structured,allen2007asymptotic,nzipardouxyeo,PP-2020b,FPP2021-MPMG}, while others do not but assume interactions between patches to induce infection \cite{arrigoni2002limits,bichara2018multi,xiao2014transmission,magal2016final,magal2018final}.
In continuous space,  various PDE models have been developed (see the monographs \cite{RR2003book,martcheva2015,BCF-2019} and a survey \cite{ruan2007spatial}). 
There are two well--known models without spatial movement: Kendall's spatial model  \cite{kendall1957, kendall1965} and Diekmann-Thieme's PDE model   \cite{diekmann1978thresholds,diekmann1979run,thieme1977asymptotic,thieme1977model}. 
Kendall's spatial model is a system of ODEs with a spatial parameter (without spatial partial derivative).
 It was proved to be the {functional law of large numbers (FLLN) limit of the multitype Markovian SIR model by Andersson and Djehiche  \cite{andersson1995limit}, where both the number of types and the population size go to infinity, and being ``Markovian" refers to the case of exponentially distributed infection durations.
Diekmann-Thieme's spatial PDE model (with partial derivatives with respect to time and infection-age) has the infection rate depending on the age of infection, as in the PDE model first proposed by Kermack and McKendrick in their 1932 paper \cite{KM32}. 
Similar to Kendall's spatial model, there is no partial derivative with respect to the spatial parameter, since there is no movement in space.
The Diekmann-Thieme PDE model 
was not yet proved  to be the FLLN limit of a non-Markovian stochastic epidemic model (in which the infectious durations have a general distribution), and can be seen as 
is a special case of our FLLN limit, which is new. 
We should also mention the spatial models in continuous space in  \cite{bowongemakouapardoux} and  \cite{vuong2021conditional}, where the stochastic model starts with a continuous process for the movement of individuals, in particular, it is assumed that individual movements follow an It{\^o} diffusion process, and the epidemic models are Markovian.

In this paper, we start with an individual-based stochastic SIR epidemic model at a finite number of locations. 
The individuals in the each location are grouped into ``Susceptible", ``Infected" and ``Recovered" compartments.
Each susceptible individual at every location may be infected from his or her own location or from other locations (see the infection rate function in equation \eqref{eqn-upsilon}).
Note that individuals do not migrate from one location to another in our model. 
Each individual is associated with a random infectivity function/process, independent from any other individual but having the same law as all the other individuals.
This random infectivity function also determines the law of the infectious duration of each individual.
Those random functions are i.i.d. for all individuals. 
For each individual, we track the age of infection, that is, the elapsed time since the individual was infected (for the initially infected individuals, this means we also know their infection times before time zero). 
To describe the epidemic dynamics at each location, we use the aggregate infectivity process of the population and a two-parameter (equivalently, measure-valued) process tracking the number of individuals that have been infected for less than or equal to a certain amount of time as well as the numbers of susceptible and recovered individuals. Such an individual-based stochastic model with only one location  has been studied by the authors in \cite{PP-2021}, where an FLLN is established and the associated PDE model for the limit is derived.  In our previous works of large population scaling limits for stochastic epidemic models (see the survey \cite{FPP-survey}), most models consider a homogeneous population with the two exceptions of a multi-patch (discrete space) model \cite{PP-2020b,FPP2021-MPMG}. Our model in this paper starts from a dense discrete space model, while the limit as both the size of the population and the number of patches/locations tend to infinity simultaneously is a deterministic spatial model in continuous space. In particular, the PDE model includes the Diekmann-Thieme spatial model as a special case (see Remarks \ref{rem-special} and \ref{rem-Diekman-Kendall}). 

We consider this stochastic epidemic model in a spatially dense setting, where the number of locations increases to infinity while the number of individuals in each location (and the total population) also goes to infinity. This has the same flavor as  the asymptotic regime in   \cite{andersson1995limit} for the 
multitype Markovian SIR model where the number of types goes to infinity while the population in each type also go to infinity.
It is worth mentioning the paper \cite{arrigoni2002limits} in which a measure-valued limit is proved for a multi-patch Markovian SIS epidemic model without migration in the asymptotic regime with both the number of patches and the number of individuals in each patch going to infinity. 
This is also in a similar fashion as the asymptotic regime of the Markovian SIR epidemic model with migration on a refining spatial grid in $\R^d$ ($d=1,2,3$), recently studied in \cite{nzipardouxyeo}, where the mesh of the grid goes to zero and the population size at each site also goes to infinity. In the limit of that model, a Laplace operator describes the spatial movement in the  time-space dynamics. 
 Unlike these works under Markovian assumptions, our model is non-Markovian and has an infection process with the infection-age dependent infectivity, which brings new mathematical challenges. 


For this model, it is convenient to describe the epidemic dynamics at all locations using a time-space representation of the vector-valued processes (for the number of infected individuals tracking the age of infection, this in fact becomes a time-age-space process). 
We prove an FLLN (Theorem \ref{thm-FLLN}) for the scaled time-space processes under a set of regularity conditions on the initial conditions, infection contact rates and random infectivity functions (Assumptions \ref{AS-LLN-1}, \ref{AS-LLN-2} and \ref{AS-lambda}). 
The limits in the FLLN are described by a set of time-space integral equations. 
It is worth highlighting that the heterogeneity of interaction effects between different locations is represented by a  function $\beta(x,y)$ for $x,y\in [0,1]$ (which resembles the kernel function of a graphon, see further discussions below).

For the weak convergence of the time-space processes, we introduce new weak convergence criteria for these time-space processes (Theorems \ref{thm-D-conv-x} and \ref{thm-DD-conv-x}), which involves the $L_1$ norm for the spatial component. 
To verify these criteria, we establish moment estimates for the increments of these processes, which is challenging due to the interactions among the individuals at the different locations. In particular, the interactions introduce nontrivial dependence in various components of the time-space processes. 
We first study the joint time-space dynamics of the susceptible population and total force of infection (Section \ref{sec-proof-conv-S-F}). 
This involves the existence and uniqueness of solution to a set of time-space Volterra-type integral equations (see equations \eqref{eq:SF}-\eqref{eqn-bar-mfF-0}), and the moment estimates associated with the increments involving the varying infectivity functions together with their interactions (in order to use Theorem \ref{thm-D-conv-x}). Given this convergence, we then establish the convergence of the time-age-space process tracking the infection ages of individuals (Section \ref{sec-proof-conv-I}). In order to employ Theorem \ref{thm-DD-conv-x}, we need to establish the moment estimates for the increments with respect to both time and infection-age parameters, for which the dependence due to interactions also brings additional challenges.

From the limit tracking the  rescaled number of infected individuals with a  given age of infection, we derive a PDE model with partial derivatives with respect to time and the age of infection (not with respect to the  spatial variable, since there is no migration among locations). It is a linear PDE model with an integral boundary condition. 
It may be seen as an extension of the PDE models in  \cite{PP-2021}, with the addition of a spatial component. 
We then discuss how the PDE model is related to the well-known Diekmann-Thieme PDE model and how it reduces to Kendall's PDE model in the Markovian case (see Remarks \ref{rem-special} and \ref{rem-Diekman-Kendall}). Note that our PDE model is more general since we do not require any condition on the distribution function of the infectious periods.


Our work also contributes to the recent studies of stochastic dynamics on graphon. 
Keliger et al. \cite{keliger2022local} consider a finite-state Markov chain with local density-dependence on a discretized graph of a graphon, and then prove an FLLN for the Markovian time-space dynamics. Their model includes a Markovian SIS model on graphon, 
and since each individual is a node on the sampled graph and naturally there is no spatial movement, the limit is in fact a system of ODEs without spatial partial derivative.   There is some resemblance between that limit and our PDE model for the Markovian SIS model, see further discussions in Remark \ref{rem-SIS-PDE}, although it is important to note that in our stochastic multi-patch model, the number of individuals in each patch also goes to infinity while in the stochastic model on the sampled graph from a graphon in \cite{keliger2022local}, there is only one individual in each node of the graph.
Petit et al. \cite{petit2021random} consider a random walk on graphon and prove an LLN limit for the Markovian time-space dynamics, which is again a system of ODEs  without spatial partial derivative. 
However, we start with a non-Markovian multi-patch epidemic dynamics, and the limiting integral equations in Theorem \ref{thm-FLLN} and the PDE models in Proposition \ref{prop-PDE-g}
and Corollaries \ref{coro-PDE-ac} and \ref{coro-PDE-det} can be regarded as dynamics on graphon, when the kernel function $\beta(x,y)$ is symmetric and takes values in $[0,1]$ (see further discussions in Remark \ref{rem-graphon}).


\subsection{Organization of the paper}
The paper is organized as follows. In Section \ref{sec-model}, we provide the detailed model description. We then present the scaled processes and assumptions and state the FLLN result in Section \ref{sec-FLLN}. 
We derive the PDE models from the FLLN limits and discuss how they are related to the already known spatial PDE models in Section \ref{sec-PDE}. 
The proofs of the FLLN are given in Sections 
\ref{sec-proof-conv-S-F} and \ref{sec-proof-conv-I} after some technical preliminaries in Section \ref{sec-technical}.

\subsection{Notation}
 All random variables and processes are defined on a common complete probability space $(\Omega, \sF, \P)$. Throughout the paper, $\NN$ denotes the set of natural numbers, and $\RR^k (\RR^k_+)$ denotes the space of $k$-dimensional vectors
with  real (nonnegative) coordinates, with $\RR (\RR_+)$ for $k=1$.  
Let $\bD=\bD(\RR_+;\RR)$ denote the space of $\RR$--valued c{\`a}dl{\`a}g functions defined on $\RR_+$. Here, convergence in $\bD$ means convergence in the  Skorohod $J_1$ topology, see Chapter 3 of \cite{billingsley1999convergence}. 
 Let $\bC$ be the subset of $\bD$ consisting of continuous functions.   
  Let $\bD_\bD= \bD(\RR_+; \bD(\RR_+;\RR))$ be the $\bD$-valued $\bD$ space, and the convergence in the space $\bD_\bD$ means that both $\bD$ spaces are endowed with the Skorohod $J_1$ topology. 
For any nondecreasing and bounded  c{\`a}dl{\`a}g function $G(\cdot): \R_+\to \R_+$, abusing notation, we write $G(dx)$ by treating $G(\cdot)$ as the positive (finite) measure on 
$\R_+$ whose distribution function is $G$. 
For any $\RR$--valued c{\`a}dl{\`a}g function $\phi(\cdot)$ on $\R_+$, the integral $\int_{a}^b \phi(x)G(dx)$ represents $\int_{(a,b]} \phi(x) G(dx)$ for $a<b$.  
We use $\bone_{\{\cdot\}}$ for the indicator function.
For $x,y \in\RR$, we denote $x\wedge y = \min\{x,y\}$ and $x\vee y = \max\{x,y\}$.

We use $\|\cdot\|_1$ to denote the $L^1([0,1])$ norm. 
For time-space processes $Z(t,x)$ and $Z(t,s,x)$,  for each $x$, we regard them in the spaces $\bD$ and $\bD_\bD$, respectively.  
For the weak convergence of the time-space processes $Z^N(t,x)$ to $Z(t,x)$ as $N\to\infty$, we use the Skorohod topology for the processes in $\bD$ with  the  $L^1([0,1])$ norm with respect to $x$.   Similarly, for the weak convergence of the time-space processes $Z^N(t,s,x)$ to $Z(t,s,x)$ as $N\to\infty$, we use the Skorohod topology for the processes in $\bD_\bD$ with  the  $L^1([0,1])$ norm with respect to $x$.
We write these spaces as $\bD(\R_+, L^1([0,1]))$ and  $\bD(\R_+, \bD(\R_+, L^1([0,1]))$, or  $\bD(\R_+, L^1)$ and  $\bD(\R_+, \bD(\R_+, L^1))$ for short. 
See the weak convergence criteria in Theorems \ref{thm-D-conv-x} and \ref{thm-DD-conv-x}.

\bigskip

\section{Model and FLLN} \label{sec-model-FLLN}

\subsection{Model Description} \label{sec-model}

We consider a population of fixed size $N$ distributed in $K$ locations 
in some bounded domain $\sS$ in $\RR^d$ ($d\ge1$). To be specific, we choose $\sS=[0,1]$. 
The arguments in the paper would remain the same for any such a domain $\sS$ since we do not consider migration among locations. 
 Also let $K$ depend on $N$, denoted as $K^N$. 
Let the $K^N$ locations  be at $x^N_k,\, k=1,\dots,K^N$ in $[0,1]$ such that $0 \le x^N_1< x^N_2<\cdots<x^N_{K^N} \le 1$. For notational convenience, let $\mathtt{I}^N_k, k=1,\dots,K^N$
be a partition of $[0,1]$ such that $x^N_k\in \mathtt{I}^N_k$ and   $|\mathtt{I}^N_k|=(K^N)^{-1}$ for all $1\le k\le K^N$. 
In each location, individuals are categorized into three groups: susceptible, infected (possibly including both exposed and infectious) and recovered. 
We assume that individuals do not move among the different locations, and 
susceptible individuals in each location can be infected from their own location as well as from other locations (as explained below). 
Suppose that there are $B^N_k$ individuals at location $x_k^N$, such that $B^N_1+\cdots+B^N_{K^N}=N$. 
(For example, there is an equal number of individuals in each path, that is, $B^N_k = N/K^N$ for all $k$.)
We assume that
\begin{align} \label{eqn-KB-condition} 
\text{both } K^N\to\infty& \qandq \frac{N}{K^N}\to\infty, \qasq N\to\infty\,. 
\end{align}

Notation: Whenever not causing any confusion, we drop the superscript $N$ in $x^N_k$, $\mathtt{I}^N_k$, $K^N$ and $B^N_k$.  
For any vector ${\bf z}=(z_1,\dots,z_K)$, we write $z(x)=\sum_{k=1}^K z_k {\bf 1}_{\mathtt{I}^N_k}(x)$ where ${\bf 1}_{\mathtt{I}^N_k}(\cdot)$ denotes the indicator function of the set $\mathtt{I}^N_k$. For a process ${\bf Z}(t)=(Z_1(t),\dots,Z_K(t))$, we write $Z(t,x) = \sum_{k=1}^K Z_k(t) {\bf 1}_{\mathtt{I}^N_k}(x)$ for $t \ge 0, x \in [0,1]$. 

Let $S^N_k(t)$, $I^N_k(t)$ and $R^N_k(t)$ be the numbers of susceptible, infected and recovered individuals in location $x_k$ at time $t\ge 0$. We clearly have $B^N_k = S^N_k(t)+I^N_k(t)+R^N_k(t)$ for each $t\ge 0$. 
We can also write the vectors ${\bf S}^N(t)=(S^N_1(t),\dots,S^N_K(t))$, ${\bf I}^N(t)=(I^N_1(t),\dots,I^N_K(t))$ and ${\bf R}^N(t)=(R^N_1(t),\dots,R^N_K(t))$, as the following time-space processes $S^N(t,x) = \sum_{k=1}^K S^N_k(t){\bf 1}_{\mathtt{I}_k}(x)$, 
$I^N(t,x) = \sum_{k=1}^K I^N_k(t){\bf 1}_{\mathtt{I}_k}(x)$ and $R^N(t,x) = \sum_{k=1}^K R^N_k(t){\bf 1}_{\mathtt{I}_k}(x)$, respectively. Note that $S^N_k(t) = S^N(t,x_k)$, and so on.

To each infected individual is attached  a random infectivity function. 
Individual $j$ in location $x_k$ has a random infectivity function $\lambda_{j,k}(\cdot)$.
We  assume that 
 $ \lambda_{j, k}(t) =0$ a.s. for $t<0$, 
 for all $j\in\ZZ \backslash\{0\}$, $ k=1,\dots, K$,  and that each $\lambda_{j,k}$ has paths in $\bD$. 
 We assume that the sequence $\{\lambda_{j,k}: j \in \ZZ \backslash \{0\},k =1,\dots,K \}$ is i.i.d.,
with $j \ge 1$ indexing newly infected individuals after time 0 and $j \le -1$ indexing the initially infected ones at time 0 (that is, those infected before time 0). 
 We use $\lambda(\cdot)$ as a generic function to denote them. 
 Let $\bar{\lambda}(t) = \E[\lambda(t)]$ and $v(t) =\Var(\lambda(t)) = \E\big[\big(\lambda(t) - \bar\lambda(t)\big)^2\big]$  for $t\ge 0$. 

Define $\eta_{j,k} = \sup\{t>0: \lambda_{j,k}(t)>0\}$, which represents the duration of the infected period for individual $j$. 
Note that this may include both the exposed and infectious periods for which the function $\lambda_{j,k}(t)$ start with being zero in the exposed period. Under the above assumption on $\{\lambda_{j,k}\}$, the variables $\{\eta_{j,k} \}$ are also i.i.d. 
Let $F(t) = \P(\eta_{j,k} \le t)$ for $j \in \ZZ \backslash \{0\}$ and $k =1,\dots, K$, representing the cumulative distribution function (c.d.f.) for the duration of the infected period. Define $F^c :=1-F$. 

Let $\{\tau^N_{j,k}: j \in \ZZ\setminus\{0\}\}$ be the associated infection times for each individual. 
Evidently, $\tau^N_{j,k}\ge 0$ for $j \ge 1$ and $\tau^N_{j,k}<0$ for $j \le -1$. Let $\tilde{\tau}^N_{j,k}=-\tau^N_{j,k}>0$ for $j \le -1$,  which denotes the time elapsed since infection, i.e. the infection age at time 0 for the initially infected individuals. 
The counting process $A^N_k(t) =\max\{ j\ge 1:  \tau^N_{j,k} \le t \}$ represents the number of newly infected individuals in location $k$ over $(0,t]$.
Some initially infected individuals may have recovered by time 0, that is, if $\eta_{j,k} \le \tilde{\tau}^N_{j,k}$ for $j \le -1$, then the individual $j$ is in the compartment of the removed, $\mathcal{R}^N_k(0)$}. However, if  $\eta_{j,k} > \tilde{\tau}^N_{j,k}$ for $j \le -1$, then the individual $j$ remains infected at time 0 and belongs to the compartment of the infected, $\mathcal{I}^N_k(0)$.  Then we have $R^N_k(0) = |\mathcal{R}^N_k(0)|$, $I^N_k(0) = |\mathcal{I}^N_k(0)|$ and $S^N_k(0) = B^N_k - I^N_k(0)  -  R^N_k(0)$ for each $k$.  Let $\cT^N_k(0) = \mathcal{I}^N_k(0) \cup \mathcal{R}^N_k(0)$.
The  process \begin{equation} \label{eqn: sT-Nk}
\sT^N_k(0,\mfa) = \sum_{j:-j \in  \cT^N_k(0) } {\bf1}_{ \tilde{\tau}^N_{-j,k} \le \mfa } \end{equation} 
represents the number of initially infected individuals 
with an infection age  less than or equal to $\mfa$ at time 0,  which include those that remain infected and those that are recovered. We assume that for each $k=1,\dots,K$, the sequence $\{\tau^N_{-j,k}: - j\in \cT^N_k\}$ is independent of the sequence 
$\{\lambda_{-j,k}: - j\in \cT^N_k \}$. We remark that this independence assumption may not be natural, since the future event times may depend on the value of $\lambda_{-j,k}$; however, this assumption is essential for the proofs, and the sources of initial infections may differ from the new infections (such as migration). 
Let 
$\eta^0_{j,k}=\sup\{t>0: \, \lambda_{j,k}(\tilde{\tau}_{j,k}+t)>0\}$
 be the associated remaining infected period.  It is clear that $\eta^0_{j,k} = \eta_{j,k} - \tilde{\tau}_{j,k}^N$. Then, for $j \le -1$,
 the conditional distribution of $\eta^0_{j,k}$ given that $ \eta_{j,k} > \tilde{\tau}_{j,k}^N=s>0$ is 
\begin{align} \label{enq-eta0-age}
\P\big(\eta^0_{j,k}> t \big| \eta_{j,k}>\tilde{\tau}_{j,k}^N=s\big) =\P\big(\eta_{j,k} - \tilde{\tau}_{j,k}^N > t \big| \eta_{j,k}>\tilde{\tau}_{j,k}^N=s\big) = \frac{F^c(t+s)}{F^c(s)}, \qforq t, s >0. 
\end{align}
Note that conditional on $\{\tilde{\tau}_{j,k}\}$, 
 the $\eta^0_{j,k}$'s are independent but not identically distributed.

The total force of infection of the infected individuals in location $k$ is given by 
\begin{equation} \label{eqn-mfk}
\mathfrak{F}_k^N(t) = \sum_{j: -j \in \cT^N_k(0)} \lambda_{-j,k}(\tilde{\tau}^N_{-j,k}+ t) + \sum_{j=1}^{S^N_k(0)} \lambda_{j,k} (t-\tau^N_{j,k}), \quad t \ge 0. 
\end{equation}
Note that since $\lambda_{j,k} (t) =0$ for $t> \eta_{j,k}$,   those that are initially infected but recovered do not contribute to the total force of infection. Then the first summation is equal to the summation over  $j$ such that $-j \in  \mathcal{I}^N_k(0)$. 
We similarly write the time-space process for the total force of infection in the population: 
$$\mathfrak{F}^N(t,x) = \sum_{k=1}^{K^N} \mathfrak{F}_k^N(t){\bf 1}_{\mathtt{I}_k}(x).$$

The rate of infection for individuals in location $k$ is given by
\begin{equation}  \label{eqn-upsilon}
\Upsilon^N_k(t) = \frac{S^N_k(t)}{B^N_k} \frac{1}{K^N} \sum_{k'=1}^{K^N}\beta^N_{k,k'} \mathfrak{F}_{k'}^N(t), \quad t \ge 0. 
\end{equation}
Here the factor $\beta^N_{k,k'}$ reflects the effect of infection of individuals from location $k'$ upon those in location $k$.
It also represents the heterogeneity of the effects of the interactions among different locations. 

The number of newly infected individuals in location $k$ by time $t$, $A^N_k(t)$, can be expressed as 
\begin{equation}\label{eqn-An-k-rep}
A^N_k(t) = \int_0^t \int_0^\infty {\bf 1}_{u \le \Upsilon^N_k(s) } Q_{k}(ds, d u),
\end{equation} 
where  $\{Q_k(ds,du),\ 1\le k\le K\}$ are mutually independent  standard  (i.e., with mean measure the Lebesgue measure) Poisson random measures (PRMs) on $\RR^2_+$.
Recall that $\{A^N_k(t): t\ge 0\}$ has the event times $\{\tau^N_{j,k}, j \ge 1\}$.

Let $\sI^N_k(t,\mfa)$ be the number of infected individuals in location $k$ that are infected at time $t$ and have been infected for less than or equal to $\mfa$. Then we can write
\begin{align} \label{eqn-In-k-rep}
\sI^N_k(t,\mfa) = \sum_{j: -j \in \cI^N_k(0)} {\bf1}_{\eta^0_{-j,k} >t} {\bf 1}_{ \tilde{\tau}^N_{-j,k} \le (\mfa-t)^+}  + \sum_{j=A^N_k((t-\mfa)^+)+1}^{A^N_k(t)} {\bf1}_{\tau^N_{j,k} + \eta_{j,k} >t}\,\,. 
\end{align} 
Note that $\sI^N_k(0,\mfa)$ (the first term on the right hand side) differs from $\sT^N_k(0,\mfa)$ since it only counts the initially infected individuals that remain infected at time 0. 
It is also clear that  for all $t\ge0$, 
$$
I^N_k(t)  =  \sI^N_k(t, \infty). 
$$

To account for the location, we also write the time-age-space process
$$
\sI^N(t,\mfa,x) =  \sum_{k=1}^{K^N} \sI^N_k(t,\mfa) {\bf 1}_{\mathtt{I}_k}(x). 
$$
Note that for each $x$, the process $\sI^N(t,\mfa,x) $ has paths in $\bD_\bD$. 
The quantity $\sI^N(0,\mfa,x) $,  corresponding to $\sI^N_k(0,\mfa)$, represents the infection-age distribution of the initially infected individuals at the different locations. It is given as input data for our model, satisfying the condition in Assumption \ref{AS-LLN-1} below.

The dynamics of $S^N_k(t)$, $I^N_k(t)$ and $R^N_k(t)$ can be expressed as 
\begin{align*}
S^N_k(t) &= S^N_k(0) -  A^N_k(t), \\
I^N_k(t) &= \sum_{j: -j \in \cI^N_k(0)} {\bf 1}_{\eta^0_{-j,k} >t}  + \sum_{j=1}^{A^N_k(t)} {\bf 1}_{\tau^N_{j,k} + \eta_{j,k} >t}\,, \\
R^N_k(t) &= R^N_k(0)+  \sum_{j: -j \in \cI^N_k(0)} {\bf 1}_{\eta^0_{-j,k} \le t}  + \sum_{j=1}^{A^N_k(t)} {\bf 1}_{\tau^N_{j,k} + \eta_{j,k} \le t}\,. 
\end{align*}

\subsection{FLLN} \label{sec-FLLN}
We recall that 
\begin{equation} \label{eqn-N-B-1}
N = \sum_{k=1}^{K^N} (S^N_k(t)+ I^N_k(t) + R^N_k(t) ) = \sum_{k=1}^{K^N} B^N_k \,,
\end{equation}
and observe that 
\begin{equation} \label{eqn-N-B-2}
\int_0^1 (S^N(t,x)+ I^N(t,x) + R^N(t,x) ) dx =  \frac{1}{K^N}  \sum_{k=1}^{K^N} (S^N_k(t)+ I^N_k(t) + R^N_k(t) )= \frac{N}{K^N} \,. 
\end{equation}
It is then reasonable to introduce the scaling of the processes by $N/K^N$, that is, for any process $Z^N_k = \mathfrak{F}_k^N, \sI^N_k, \Upsilon^N_k,  A^N_k,  S^N_k,  I^N_k,  R^N_k$, we define $\bar{Z}^N_k= (N/K^N)^{-1} Z^N_k$.
We then define the scaled time-space processes  
\[
 \bar{Z}^N(t,x)=  \sum_{k=1}^{K^N} \bar{Z}^N_k(t) {\bf 1}_{\mathtt{I}_k}(x), \quad Z^N_k = \mathfrak{F}_k^N, \Upsilon^N_k,  A^N_k,  S^N_k,  I^N_k,  R^N_k
\]
and
\[
 \bar{\sI}^N(t,\mfa,x)= \sum_{k=1}^{K^N}\bar{\sI}_k^N (t,\mfa){\bf 1}_{\mathtt{I}_k}(x)\,. 
\]
In addition, define the scaled population size at each location \[\bar{B}^N(x) = \sum_{k=1}^{K^N}\bar{B}_k^N {\bf 1}_{\mathtt{I}_k}(x), \quad \text{with} \quad \bar{B}_k^N =  (N/K^N)^{-1} B^N_k.\]
Hence, from \eqref{eqn-N-B-2} and the scaling, we obtain
\[
\int_0^1 (\bar{S}^N(t,x)+ \bar{I}^N(t,x) + \bar{R}^N(t,x) ) dx= \int_0^1 \bar{B}^N(x) dx=1\,.
\]



We make the following assumption on the initial condition.


\begin{assumption} \label{AS-LLN-1}
There exist nonnegative deterministic functions $(\bar{S}(0,x), \bar{\sT}(0,\mfa, x), \bar{R}(0,x))$ such that 
for each $x$, $\bar{\sT}(0,\cdot, x)$ is in $\bC$, and
 for each $\mfa \in [0, \infty]$, 
\begin{align}\label{eqn-initial-L1conv}
\|\bar{S}^N(0,\cdot) - \bar{S}(0,\cdot)\|_{1} \to 0, \quad \ \|\bar{\sT}^N(0,\mfa, \cdot) - \bar{\sT}(0,\mfa, \cdot)\|_{1} \to 0, \quad \|\bar{R}^N(0,\cdot) - \bar{R}(0,\cdot)\|_{1} \to 0 
\end{align}
in probability as $N\to\infty$.
This implies that $\|\bar{\sI}^N(0,\mfa, \cdot) - \bar{\sI}(0,\mfa, \cdot)\|_{1} \to 0$ in probability as $N\to\infty$, where $ \bar{\sI}(0, d \mfa, x) =  \bar{\sT}(0,d \mfa, x) F^c(\mfa)$; see Lemma \ref{lem:sIsT} below. 
In addition, letting $\bar{I}(0,x) = \bar{\sI}(0,\infty, x)$, we have
\begin{equation} \label{eqn-initial-integral}
\int_0^1 (\bar{S}(0,x)+\bar{I}(0,x)+\bar{R}(0,x)) dx =1. 
\end{equation}
There exists $\bar{B}(x)$ such that 
\begin{equation}\label{cvinfty}
\|\bar{B}^N(\cdot) - \bar{B}(\cdot)\|_{\infty} = \sup_{x\in [0,1]} |\bar{B}^N(x) - \bar{B}(x) | \to 0,
\end{equation}
where for some constants $0<c_B<C_B<\infty$, 
\begin{equation} \label{eqn-barB-condition} 
\bar{B}(x) \in  [c_B, C_B] \quad \forall x\in[0,1], \end{equation}
and 
\[
 \int_0^1 \bar{B}(x) dx =1\,. \]
Moreover, the following identity holds:
 \begin{equation} \label{eqn-barB-identity} 
 \bar{B}(x) = \bar{S}(0,x)+\bar{I}(0,x)+\bar{R}(0,x), \quad \forall x\in[0,1]. 
 \end{equation}
 
\end{assumption}
Note that, thanks to \eqref{cvinfty} and \eqref{eqn-barB-condition},  we may and do assume that $c_B$ and $C_B$ have been chosen in such a way that for some $N_0$,
\begin{equation}\label{eqn-CB}
\bar{B}^N(x) \in  [c_B, C_B] \quad \forall N\ge N_0,  \, x\in[0,1]. 
\end{equation} 


Under the assumption in \eqref{eqn-initial-L1conv}, it follows that 
\begin{align*}
\|\bar{I}^N (0,\cdot) - \bar{I}(0,\cdot) \|_{1} \to 0 
\end{align*}
in probability as $N\to \infty$. 

\begin{lemma} \label{lem:sIsT}
Under the assumption in \eqref{eqn-initial-L1conv}, 
 $\|\bar{\sI}^N(0,\mfa, \cdot) - \bar{\sI}(0,\mfa, \cdot)\|_{1} \to 0$ in probability as $N\to\infty$, where $ \bar{\sI}(0, d \mfa, x) =  \bar{\sT}(0,d \mfa, x) F^c(\mfa)$.
\end{lemma}
\begin{proof}
By \eqref{eqn: sT-Nk}, 
\[
\bar{\sT}^N(0,d\mfa,x) = \frac{K^N}{N}\sum_{k=1}^{K^N}\sum_{j:-j \in  \cT^N_k(0) } \delta_{ \tilde{\tau}^N_{-j,k}}  (d \mfa ) {\bf 1}_{\mathtt{I}_k}(x) \,.
\]
By \eqref{eqn-In-k-rep}, 
\begin{align*}
 \bar{\sI}^N(0,d \mfa, x) & =  \frac{K^N}{N}\sum_{k=1}^{K^N} \sum_{j: -j \in \cT^N_k(0)} {\bf1}_{\eta_{-j,k} >  \tilde{\tau}^N_{-j,k}} \delta_{ \tilde{\tau}^N_{-j,k}}(d \mfa){\bf 1}_{\mathtt{I}_k}(x) \\
 &=  \frac{K^N}{N}\sum_{k=1}^{K^N} \sum_{j: -j \in \cT^N_k(0)} \big({\bf1}_{\eta_{-j,k} > \mfa} - F^c( \mfa) \big)\delta_{ \tilde{\tau}^N_{-j,k}}(d \mfa){\bf 1}_{\mathtt{I}_k}(x) \\
 & \quad + \frac{K^N}{N}\sum_{k=1}^{K^N} \sum_{j: -j \in \cT^N_k(0)}  F^c( \mfa) \delta_{ \tilde{\tau}^N_{-j,k}}(d \mfa){\bf 1}_{\mathtt{I}_k}(x) \\
 & :=  \bar{\sI}^N_0(0,d \mfa, x) +  \bar{\sI}^N_1(0,d \mfa, x)\,. 
 \end{align*}
 By the convergence $ \|\bar{\sT}^N(0,\mfa, \cdot) - \bar{\sT}(0,\mfa, \cdot)\|_{1} \to 0$ in probability, and for each $x$, $\bar{\sT}(0,\cdot, x)$ is in $\bC$, we immediately have the convergence  $\|\bar{\sI}^N_1(0,\mfa, \cdot) - \bar{\sI}(0,\mfa, \cdot)\|_{1} \to 0$ in probability as $N\to\infty$. 
It is easy to show that  $\|\bar{\sI}^N_0(0, \mfa, \cdot) \|_1\to 0 $ in probability by the independence of the sequences $\{ \tilde{\tau}^N_{-j,k}\}_{j}$ and $\{\lambda_{-j,k}\}_{j}$ for each $k$ (see a similar argument in the proof of Lemma \ref{lem-mfN0-conv} below). 
\end{proof}



%


We introduce for each $x,x'\in [0,1]$,
\begin{equation} \label{eqn-betaN-xx'} 
\beta^N(x,x') = \sum_{k,k'} \beta^N_{k,k'} {\bf 1}_{\mathtt{I}_k}(x)  {\bf 1}_{\mathtt{I}_{k'}}(x')\,. 
\end{equation}

%

\begin{assumption} \label{AS-LLN-2}
There exists a constant $C_\beta>0$ such that for all $N\ge1$, $x \in[0,1]$,
\begin{equation}\label{eqn-C-beta}
\int_0^1\beta^N(x,y)dy \vee\int_0^1\beta^N(y,x)dy\le C_\beta\, .
\end{equation} 
There exists a function $\beta: [0,1]\times [0,1]\mapsto \R_+$ such that
for any bounded measurable function $\phi: [0,1]\mapsto \RR$, 
\begin{align}\label{conv-beta}
\left\| \int_0^1[\beta^N(\cdot,y)-\beta(\cdot,y)]\phi(y)dy\right\|_1\to0\,.
\end{align}
\end{assumption}

\begin{remark}
Concerning condition \eqref{eqn-C-beta}, let us first note that,  if $\beta^N_{k,k'}=\beta^N_{k',k}$ (symmetric) 
for all $N\ge1,\ 1\le k,k'\le K$, the boundedness of $\int_0^1\beta^N(x,y)dy$ is equivalent to that of
$\int_0^1\beta^N(y,x)dy$. Clearly \eqref{conv-beta} implies that \eqref{eqn-C-beta} is satisfied with $\beta^N$ replaced by $\beta$. We note that this assumption allows in particular $\beta(x,y)$ to explode on the diagonal $x=y$,  for example, $\beta(x,y) = \frac{c}{\sqrt{|x-y|}}$ for some $c>0$, meaning that infectious interactions between ``close by" individuals are much more frequent than between 
distant ones. See further discussions in Remark \ref{rem-graphon}. 
\end{remark}

We make the following assumption on the random function $\lambda$.

\begin{assumption} \label{AS-lambda}
  Let $\lambda(\cdot)$ be a process having the same law of $\{\lambda_j(\cdot)\}_j$. 
Assume that there exists a constant $\lambda^*$ such that $\sup_{t\in \R_+} \lambda(t) \le \lambda^*$ almost surely.
Assume that there exist an integer $\kappa$, a random sequence  $0=\zeta^0 \le \zeta^1 \le \cdots \le \zeta^\kappa <\infty$ and associated random functions $\lambda^\ell \in \bC(\RR_+;[0,\lambda^\ast])$, $1\le\ell \le \kappa$, such that 
\begin{align} \label{eqn-lambda-assump}
\lambda(t) = \sum_{\ell=1}^\kappa \lambda^\ell(t) \bone_{[\zeta^{\ell-1},\zeta^\ell)}(t).
\end{align}
We write $F_\ell$ for the c.d.f. of $\zeta^\ell$, $\ell =1, \dots,\kappa$. 
In addition, 
we assume that there exists a deterministic nondecreasing function $\varphi \in \bC(\RR_+;\RR_+)$ with $\varphi(0)=0$ such that $|\lambda^\ell(t) - \lambda^\ell(s)| \le \varphi(t-s)$ almost surely for all $t,s \ge 0$ and for all $\ell\ge 1$. 

\end{assumption}

We next state the main result of the paper. In several formulas, whenever $F^c(\mfa)$ appears in the denominator, 
 it is only possibly equal to zero if the corresponding numerator is also equal to zero, and in that case, we use the convention that $\frac{0}{0} =0$.

\begin{theorem} \label{thm-FLLN} 
Under Assumptions \ref{AS-LLN-1}, \ref{AS-LLN-2} and \ref{AS-lambda},  
\begin{align} \label{eqn-LLN-conv}
&\|\bar{\mathfrak{F}}^N(t,\cdot) - \bar{\mathfrak{F}}(t,\cdot)\|_{1} \to 0, \quad \| \bar{S}^N(t,\cdot) -\bar{S}(t,\cdot)\|_{1} \to 0, \quad \quad \| \bar{R}^N(t,\cdot) -\bar{R}(t,\cdot)\|_{1} \to 0,  \non \\
&  \|  \bar{\sI}^N(t,\mfa, \cdot) -  \bar{\sI}(t,\mfa, \cdot) \|_{1} \to 0
\end{align}
 in probability as $N\to \infty$, locally uniformly in $t$ and $\mfa$, 
where the limits are given by the unique solution to the following set of integral equations. The limit  $(\bar{S}(t,x),\bar{\mathfrak{F}}(t,x))$ is a unique solution to the system of integral equations:  for $t\ge 0$ and $x\in [0,1]$, 
\begin{align}
\bar{S}(t,x) &= \bar{S}(0,x) - \int_0^t \bar\Upsilon(s,x) ds\,, \label{eqn-barS-tx} \\
\bar{\mathfrak{F}}(t,x) &=\int_0^\infty \frac{\bar{\lambda}(\mfa+t)}{F^c(\mfa)}\bar{\sI}(0,d \mfa, x) + \int_0^t  \bar{\lambda}(t-s) \bar\Upsilon(s,x) ds \,, \label{eqn-barmfF-tx}
\end{align}
where
\begin{align} \label{eqn-barUpsilon-tx}
\bar\Upsilon(t,x) =\frac{ \bar{S}(t,x)}{\bar{B}(x)} \int_0^1 \beta(x,x') \bar{\mathfrak{F}}(t,x')dx' =  \bar{\sI}_\mfa(t,0, x) \,,
\end{align}
with $\bar\sI_\mfa(t,\mfa,x)$ being the derivative of $\bar{\sI}(t,\mfa, x)$ with respect to the infection age $\mfa$.
Given $\bar{S}(t,x)$ and $\bar{\mathfrak{F}}(t,x)$, the limits $\bar{\sI}(t,\mfa, x) $ and $\bar{R}(t,x) $ are given by
\begin{align}
\bar{\sI}(t,\mfa, x) &=\int_0^{(\mfa-t)^+} \frac{F^c(\mfa'+t)}{F^c(\mfa')} \bar{\sI}(0,d \mfa', x)  + \int_{(t-\mfa)^+}^t F^c(t-s) \bar\Upsilon(s,x)  ds \,, \label{eqn-barsI-tax}\\
\bar{R}(t,x) &=  \bar{R}(0,x) + \int_0^{\infty}\Big(1- \frac{F^c(\mfa'+t)}{F^c(\mfa')} \Big) \bar{\sI}(0,d \mfa', x)  + \int_0^t F(t-s) \bar\Upsilon(s,x)  ds \,. \label{eqn-barR-tx}
\end{align}
In addition, 
\begin{align*}
 \| \bar{I}^N(t,\cdot) -\bar{I}(t,\cdot)\|_{1} \to 0
\end{align*}
locally uniformly in $t$ in probability as $N\to \infty$,  
where 
\begin{align} \label{eqn-barI-tx}
\bar{I}(t,x) =\int_0^{\infty} \frac{F^c(\mfa'+t)}{F^c(\mfa')} \bar{\sI}(0,d \mfa', x)  + \int_{0}^t F^c(t-s) \bar\Upsilon(s,x)  ds\,. 
\end{align}
For each $x$, the limits $\bar{S}(t,x)$, $\bar{\mathfrak{F}}(t,x)$,  $\bar{\sI}(t,\mfa, x)$, $\bar{I}(t,x)$ and $\bar{R}(t,x) $ are continuous in $t$ and $\mfa$. 


\end{theorem}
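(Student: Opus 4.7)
The plan is to decompose the convergence problem according to the causal structure of the dynamics: the pair $(\bar{S}^N, \bar{\mathfrak{F}}^N)$ is autonomous given the initial data and the driving Poisson random measures, while $(\bar{\sI}^N, \bar{I}^N, \bar{R}^N)$ are explicit functionals of it. Before tackling convergence, I would first establish existence and uniqueness of the solution to the limiting coupled system \eqref{eqn-barS-tx}--\eqref{eqn-barUpsilon-tx} by Picard iteration on $\bC([0,T]; L^1([0,1])) \times \bC([0,T]; L^1([0,1]))$, using the bound $\bar{\lambda}(t) \le \lambda^*$ from Assumption \ref{AS-lambda}, the row/column bound \eqref{eqn-C-beta} on $\beta$, and the lower bound $\bar{B}(x) \ge c_B$ from \eqref{eqn-barB-condition}; these ingredients yield Lipschitz estimates on the right-hand sides that propagate through a Gr\"onwall argument and extend to all $t \ge 0$. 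Uniqueness of $\bar{\sI}$, $\bar{I}$, $\bar{R}$ then follows because \eqref{eqn-barsI-tax}--\eqref{eqn-barR-tx}, \eqref{eqn-barI-tx} express them as explicit linear functionals of $\bar{\Upsilon}$ and $\bar{\sI}(0,\cdot,x)$.

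For the convergence of $(\bar{S}^N, \bar{\mathfrak{F}}^N)$, I would write the Doob--Meyer decomposition of \eqref{eqn-An-k-rep}, giving
\begin{equation*}
\bar{S}^N(t,x) = \bar{S}^N(0,x) - \int_0^t \bar{\Upsilon}^N(s,x)\, ds - \bar{M}^N_S(t,x),
\end{equation*}
where $\bar{M}^N_S$ is a locally square-integrable martingale with predictable quadratic variation of order $(N/K^N)^{-1}$ in each location. Doob's inequality combined with the $L^1$-in-$x$ averaging (which behaves like an empirical mean over the $K^N$ locations) yields $\|\bar{M}^N_S(t,\cdot)\|_1 \to 0$ in probability. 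A parallel identity for $\bar{\mathfrak{F}}^N$ is built from \eqref{eqn-mfk}: Assumption \ref{AS-lambda} decomposes $\lambda = \sum_\ell \lambda^\ell \mathbf{1}_{[\zeta^{\ell-1},\zeta^\ell)}$ into a continuous part (controlled by the modulus $\varphi$) and a jump part (controlled by the c.d.f.s $F_\ell$), which together with \eqref{enq-eta0-age} permits moment estimates on time-increments of $\bar{\mathfrak{F}}^N$. Tightness in $\bD(\R_+; L^1([0,1]))$ then follows from Theorem \ref{thm-D-conv-x}, and the limit is identified by passing term-by-term in the prelimit equations, using \eqref{conv-beta} for the interaction kernel and the uniform bound \eqref{eqn-CB} on $\bar{B}^N$.

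For $\bar{\sI}^N(t,\mfa,x)$, I would work conditionally on $(\bar{S}^N, \bar{\mathfrak{F}}^N)$ via the representation \eqref{eqn-In-k-rep}. The first sum corresponds to initially infected individuals whose residual infectious periods satisfy \eqref{enq-eta0-age}, while the second sum counts new infections that remain infectious at time $t$; each is handled via its own martingale decomposition, with the thinning probabilities determined by $F^c$. To apply Theorem \ref{thm-DD-conv-x} I need moment bounds on the increments of $\bar{\sI}^N$ in \emph{both} the time variable $t$ and the infection-age variable $\mfa$; the $t$-increment bound parallels the estimates for $\bar{S}^N$ and $\bar{\mathfrak{F}}^N$, while the $\mfa$-increment bound uses the assumed continuity of $\bar{\sI}(0,\cdot,x)$ in Assumption \ref{AS-LLN-1} together with continuity of $F^c$ on compact intervals. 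Once convergence of $\bar{\sI}^N$ is established, $\bar{I}^N = \bar{\sI}^N(\cdot,\infty,\cdot)$ and $\bar{R}^N$ follow as continuous functionals of the already identified limits, and the continuity of the limits in $t$ and $\mfa$ is read off from the integral equations using dominated convergence and the boundedness of $\bar{\Upsilon}$.

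The principal obstacle is the nonlocal coupling through the infection kernel $\beta^N$: the term $\int \beta^N(x,y)\bar{\mathfrak{F}}^N(s,y)\, dy$ introduces dependence between individuals at \emph{all} locations, which prevents straightforward factorization in the moment estimates for increments. The uniform row/column bound \eqref{eqn-C-beta} is essential here, and combined with the $L^1$-in-$x$ framework it yields moment estimates that are uniform in $N$; the weak-type convergence \eqref{conv-beta} is then used only at the identification step, not at the tightness step. A secondary difficulty, which the piecewise-continuous decomposition of $\lambda$ in Assumption \ref{AS-lambda} is designed to circumvent, is the lack of path-continuity of the random infectivity: jumps of $\lambda$ must be controlled through the regularity of the distributions $F_\ell$ to ensure no mass concentrates at bad times in the limit.
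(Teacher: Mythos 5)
Your strategy matches the paper's closely: existence/uniqueness of the limiting Volterra system, martingale decomposition plus small-increment estimates for the error terms, verification of the criteria in Theorems \ref{thm-D-conv-x} and \ref{thm-DD-conv-x}, and explicit formulas for $\bar{\sI}$, $\bar{I}$, $\bar{R}$ as functionals of $\bar\Upsilon$. One point where your description is off, and worth correcting: Theorem \ref{thm-D-conv-x} is \emph{not} a tightness criterion --- it is a direct sufficient condition for $\|X^N(t,\cdot)\|_1 \to 0$ in probability, locally uniformly. The paper does not argue by tightness, subsequence extraction and identification; instead it applies Theorem \ref{thm-D-conv-x} only to the error/fluctuation terms ($\bar M^N_A$, $\Delta^N_{1,1}$, $\Delta^N_{1,2}$, and $\bar{\mathfrak F}^N_0 - \bar{\mathfrak F}_0$), and then a separate Gronwall-type estimate (Proposition \ref{prop-conv-S-mfF}) propagates the vanishing of those terms into $\|\bar S^N(t,\cdot)-\bar S(t,\cdot)\|_1 + \|\bar{\mathfrak F}^N(t,\cdot)-\bar{\mathfrak F}(t,\cdot)\|_1 \to 0$. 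Your phrase ``the limit is identified by passing term-by-term in the prelimit equations'' elides exactly this step: the interaction term $\int \beta^N(x,y)\bar{\mathfrak F}^N(s,y)\,dy$ cannot be passed to the limit term-by-term without already knowing $\bar{\mathfrak F}^N \to \bar{\mathfrak F}$, which is the quantity being proved, so a bootstrapping/Gronwall argument is unavoidable there. A second smaller difference: the paper runs Picard iteration and a priori bounds in $L^\infty([0,1])$ rather than $L^1$, because the natural a priori bounds coming from Lemma \ref{apriori} ($\bar S \le C_B$, $\bar{\mathfrak F} \le C_T$ pointwise) are sup-norm bounds that also control the interaction integral via \eqref{eqn-C-beta}; working only in $L^1$ would make it harder to control $\int \beta(x,y)\bar{\mathfrak F}(s,y)\,dy$. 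Neither issue is fatal --- with a correct tightness criterion and the uniqueness you establish, the subsequence-extraction route would also work --- but as written your proposal leans on Theorem \ref{thm-D-conv-x} for something it does not provide and omits the key Gronwall propagation step that substitutes for it.
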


\begin{remark} \label{rem-graphon}
Our model can be regarded in some sense as non-Markovian epidemics dynamics on graphon. In particular, the 
function $\beta(x,x')$ can be regarded as the graphon kernel function,  representing the inhomogeneity in the connectivity.  However, the kernel function is often assumed to take values in $[0,1]$ and to be symmetric in the graphon literature. In our model, $\beta(x,x')$ does not necessarily take values in $[0,1]$ although it can be rescaled to $[0,1]$ in case it is bounded, and the function $\beta(x,x')$ may not be necessarily symmetric.  In the prelimit (the $N^{\rm th}$ system), the locations $\{\mathtt{I}^N_k\}_k$ can be regarded as a discretization of the unit interval $[0,1]$ and the infection rate functions between different locations $\beta^N_{k,k'}$ in \eqref{eqn-betaN-xx'} can then be regarded as the corresponding discretization of the function $\beta(x,x')$. We refer the readers to  \cite{keliger2022local} and \cite{petit2021random} for Markov dynamics on graphon  and the corresponding ODE approximations with a spatial parameter (no spatial partial derivative). See also Remark \ref{rem-SIS-PDE} for further discussions on how our PDE model relates to the ODE limit with a spatial parameter  for the Markovian SIS model on graphon in \cite{keliger2022local}. 
\end{remark}

\begin{remark} \label{rem-SIS}
One could adapt the methods we use for a spatial SIS model, in which infected individuals becomes susceptible right after recovery. 
For the spatial SIS model,  we have the identity
 $B^N_k=S^N_k(t)+ I^N_k(t)$ and 
 $\sum_{k=1}^{K^N} B^N_k =\sum_{k=1}^{K^N} (S^N_k(t)+ I^N_k(t))=N$ for each $t\ge0$.
  In the limit,  $\bar{B}(x) = \bar{S}(t,x) + \bar{I}(t,x)$ for each $t\ge 0, x\in [0,1]$ and
$\int_0^1 \bar{B}(x) dx = \int_0^1 (\bar{S}(t,x) + \bar{I}(t,x)) dx=1$ for each $t\ge 0$. 
We use two processes $\bar{\mathfrak{F}}^N(t,x)$ and $\bar{\sI}^N(t,\mfa, x) $ to describe the epidemic dynamics, and can show that $\|\bar{\mathfrak{F}}^N(t,\cdot)-\bar{\mathfrak{F}}(t,\cdot)\|_1 \to 0$ and $ \|\bar{\sI}^N(t,\mfa, \cdot)- \bar{\sI}(t,\mfa, \cdot)\|_1 \to 0$ in probability locally uniformly in $t$ and $\mfa$ as $N\to\infty$, where 
\begin{align}\label{eqn-bar-mfF-SIS}
\bar{\mathfrak{F}}(t,x) &=\int_0^\infty \bar{\lambda}(\mfa+t) \bar{\sT}(0,d \mfa, x) + \int_0^t  \bar{\lambda}(t-s)  \frac{ \bar{S}(s,x) }{\bar{B}(x)}\int_0^1 \beta(x,x') \bar{\mathfrak{F}}(s,x')dx'  ds\,,
\end{align}
and
\begin{align} \label{eqn-bar-sI-SIS}
\bar{\sI}(t,\mfa, x) &=\int_0^{(\mfa-t)^+} \frac{F^c(\mfa'+t)}{F^c(\mfa')} \bar{\sI}(0,d \mfa', x)  + \int_{(t-\mfa)^+}^t F^c(t-s) \frac{\bar{S}(s,x)}{\bar{B}(x)} \int_0^1 \beta(x,x') \bar{\mathfrak{F}}(s,x')dx'   ds \,, 
\end{align} 
with $ \bar{S}(t,x)$ satisfying 
\begin{equation}\label{eqn-bar-SI-sum}
\int_0^1 (\bar{S}(t,x) + \bar{\sI}(t,\infty, x) ) dx=1\,.
\end{equation}
Using $\bar{I}(t,x)=\bar{\sI}(t,\infty, x)$, 
we can write the last  equation as $\int_0^1 (\bar{S}(t,x) + \bar{I}(t,x)) dx=1$, and
 the limit $\bar{I}(t,x)$ is given by
\begin{align*}
\bar{I}(t,x) =\int_0^{\infty} \frac{F^c(\mfa'+t)}{F^c(\mfa')} \bar{\sI}(0,d \mfa', x)  + \int_{0}^t F^c(t-s) \frac{\bar{S}(s,x)}{\bar{B}(x)} \int_0^1 \beta(x,x') \bar{\mathfrak{F}}(s,x')dx'   ds\,. 
\end{align*}
\end{remark}

\bigskip
\section{PDE Models} \label{sec-PDE}

In this section we derive the PDE models associated with the limits from the FLLN. 
For each $t$, the limits $\bar{S}(t,x), \bar{\mathfrak{F}}(t,x), \bar{I}(t,x), \bar{R}(t,x)$ can be regarded as the densities of the quantities, susceptibles, aggregate infectivity, infected and recovered,  distributed over the location $x \in [0,1]$, and for each $t$ and $\mfa$, the function $\bar\sI(t,\mfa,x)$ can be also regarded as the density of the proportion of infected individuals at time $t$ with infection age less than or equal to $\mfa$, over the location $x\in [0,1]$. In addition, for each fixed $t$ and $x$,  $\bar\sI(t,\mfa,x)$ is increasing in $\mfa$, and can be regarded as a ``distribution" over the infection ages. If  $\bar\sI(t,\mfa,x)$ is absolutely continuous in $\mfa$, we let $\bar\mfi(t,\mfa,x) = \bar\sI_\mfa(t,\mfa,x)$ be the density function of $\bar{\sI}(t,\mfa, x)$ with respect to the infection age $\mfa$. 

In the following we will consider the dynamics of $\bar{S}(t,x), \bar{\mathfrak{F}}(t,x), \bar{I}(t,x), \bar{R}(t,x), \bar{\sI}(t,\mfa, x)$ in $t$ and $\mfa$, as a PDE model. 
Since there is no movement of individuals between locations, no derivative with respect to $x$ will  appear. However, the interaction among individuals in different locations will be captured in these dynamics, in particular, in the expression of $\bar\Upsilon(t,x)$ in \eqref{eqn-barUpsilon-tx}.

In this section, we restrict ourselves to $F$ being a continuous distribution.


\begin{prop} \label{prop-PDE-g}
Suppose that for each $x$, $\bar\sI(0,\mfa,x)$ is absolutely continuous with respect to $\mfa$ with density $\bar\mfi(0,\mfa,x) =\bar\sI_\mfa(0,\mfa,x)$. 
Then for $t,\mfa>0$ and $x\in [0,1]$, the function $\bar\sI(t,\mfa,x)$ is absolutely continuous in $t$ and $\mfa$, and its density $\bar{\mfi}(t,\mfa,x) =\bar\sI_\mfa(t,\mfa,x) $ with respect to $\mfa$ satisfies 
\begin{equation} \label{eqn-mfi-PDE-g}
\frac{\partial \bar{\mfi}(t,\mfa,x)}{\partial t} + \frac{\partial \bar{\mfi}(t,\mfa,x)}{\partial \mfa} = -\bar{\mfi}(t,\mfa,x) \, \frac{ F(d\mfa)}{F^c(\mfa)}  \,,
\end{equation}
$(t,\mfa,x)$ in $(0,\infty)^2\times [0,1]$, with the initial condition  $\bar\mfi(0,\mfa,x)=\bar\sI_\mfa(0,\mfa,x)$ for $(\mfa,x)\in (0,\infty) \times [0,1]$, and the boundary condition 
\begin{align} \label{eqn-mfi-PDE-BC1-g}
\bar\mfi(t,0,x) = \frac{\bar{S}(t,x)}{\bar{B}(x)} \int_0^1 \beta(x,x') \Bigg(\int_0^{\infty} \frac{\bar\lambda(\mfa')}{F^c(\mfa')} \, \bar\mfi(t,\mfa',x') d \mfa' \Bigg) d x' \,. 
\end{align}

The function $\bar{S}(t,x)$ satisfies
\begin{equation}\label{eqn-barS-PDE}
 \frac{\partial \bar{S}(t,x)}{ \partial t} = -\bar{\mfi}(t,0,x)\,, 
\end{equation}
with $\bar{S}(0,x)$ satisfying \eqref{eqn-barB-identity}. 

Moreover, the PDE \eqref{eqn-mfi-PDE-g}-\eqref{eqn-mfi-PDE-BC1-g}  has a unique  non-negative solution which is given as follows: 
for $\mfa\ge t$ and $x \in [0,1]$, 
\begin{equation} \label{eqn-bar-mfi-s1-g}
\bar\mfi(t,\mfa,x) = \frac{F^c(\mfa)}{F^c(\mfa-t)} \, \bar\mfi(0, \mfa-t, x), 
\end{equation}
and for $t>\mfa$ and $x\in [0,1]$,
\begin{equation}\label{eqn-bar-mfi-s2-g}
\bar\mfi(t,\mfa,x) = F^c(\mfa) \, \bar\mfi(t-\mfa,0, x), 
\end{equation}
and the boundary function is the unique  non-negative solution to the integral equation
\begin{align}\label{eqn-mfi-PDE-BC2-g}
\bar{\mfi}(t,0,x) & = (\bar{B}(x))^{-1} \Big(  \bar{S}(0,x) - \int_0^t \bar{\mfi}(s,0,x) ds \Big)  \non \\
& \quad \times \int_0^1 \beta(x,x') 
\left( \int_0^\infty \bar{\lambda}(\mfa+t)\, \frac{\bar{\mfi}(0,\mfa, x')}{F^c(\mfa)}d \mfa + \int_0^t  \bar{\lambda}(t-s) \, \bar{\mfi}(s,0,x') ds \right)  dx' \,. 
\end{align}

\end{prop}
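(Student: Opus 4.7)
The plan is, first, to differentiate the integral equation \eqref{eqn-barsI-tax} in $\mfa$ to obtain the explicit representations \eqref{eqn-bar-mfi-s1-g}--\eqref{eqn-bar-mfi-s2-g}; second, to verify the PDE \eqref{eqn-mfi-PDE-g} and the nonlocal boundary condition \eqref{eqn-mfi-PDE-BC1-g} directly from these representations together with \eqref{eqn-barUpsilon-tx}--\eqref{eqn-barmfF-tx}; and finally to close the loop by showing uniqueness of the scalar fixed-point equation \eqref{eqn-mfi-PDE-BC2-g} for the boundary trace.

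\textbf{Representation and PDE.} With $\bar{\sI}(0,\cdot,x)$ absolutely continuous with density $\bar{\mfi}(0,\cdot,x)$, equation \eqref{eqn-barsI-tax} reads
\begin{equation*}
\bar{\sI}(t,\mfa, x) = \int_0^{(\mfa-t)^+} \frac{G^c(\mfa'+t)}{G^c(\mfa')}\, \bar{\mfi}(0,\mfa', x)\, d\mfa' + \int_{(t-\mfa)^+}^t G^c(t-s)\, \bar\Upsilon(s,x)\, ds.
\end{equation*}
Differentiating in $\mfa$ gives \eqref{eqn-bar-mfi-s1-g} when $\mfa\ge t$ (only the upper limit of the first integral varies) and, using $\bar\Upsilon(t-\mfa,x)=\bar{\mfi}(t-\mfa,0,x)$ from \eqref{eqn-barUpsilon-tx}, formula \eqref{eqn-bar-mfi-s2-g} when $t>\mfa$; this also yields the absolute continuity claim. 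Computing $\partial_t\bar{\mfi} + \partial_\mfa\bar{\mfi}$ from each formula, the derivatives involving $\bar{\mfi}_\mfa(0,\mfa-t,x)$ cancel in the first regime and those of $\bar{\mfi}(t-\mfa,0,x)$ cancel in the second, leaving in both cases $-\bar{\mfi}(t,\mfa,x)\,\nu(d\mfa)/G^c(\mfa)$, which is the PDE \eqref{eqn-mfi-PDE-g} (read in the Stieltjes sense along the characteristics $\mfa-t=\mathrm{const}$ when $\nu$ is singular, using the left-continuous $G^c$ as emphasised in Remark \ref{GvsF}). Equation \eqref{eqn-barS-PDE} is then immediate by differentiating \eqref{eqn-barS-tx} and invoking $\bar\Upsilon(t,x)=\bar{\mfi}(t,0,x)$.

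\textbf{Boundary condition.} Starting from $\bar{\mfi}(t,0,x)=\bar\Upsilon(t,x)=\bar{B}(x)^{-1}\bar{S}(t,x)\int_0^1\beta(x,x')\bar{\mathfrak{F}}(t,x')\,dx'$, substitute \eqref{eqn-barmfF-tx} for $\bar{\mathfrak{F}}(t,x')$ and invert \eqref{eqn-bar-mfi-s1-g}, \eqref{eqn-bar-mfi-s2-g} to rewrite its two terms as integrals against $\bar{\mfi}(t,\cdot,x')$: in $\int_0^{\bar\mfa}\bar{\lambda}(\mfa+t)\bar{\mfi}(0,\mfa,x')\,d\mfa$ change variable $\mfa'=\mfa+t$ (using \eqref{eqn-bar-mfi-s1-g}), and in $\int_0^t\bar{\lambda}(t-s)\bar{\mfi}(s,0,x')\,ds$ change variable $\mfa'=t-s$ (using \eqref{eqn-bar-mfi-s2-g}). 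The factors $G^c(\mfa')/G^c(\mfa'-t)$ arising in the substitutions precisely match the reciprocal of the prefactor in \eqref{eqn-mfi-PDE-BC1-g}, and the two contributions recombine into the single integral over $[0,t+\bar\mfa]$ appearing there, with the convention $G^c\equiv 1$ on $\R_{-}$ handling the sub-range $\mfa'<t$.

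\textbf{Uniqueness.} Any non-negative solution of the PDE \eqref{eqn-mfi-PDE-g} with the prescribed initial data must coincide with \eqref{eqn-bar-mfi-s1-g}--\eqref{eqn-bar-mfi-s2-g} by the method of characteristics: along $\mfa=t+c$ the PDE reduces to the scalar Stieltjes equation $d\bar{\mfi}/\bar{\mfi}=-\nu(d\mfa)/G^c(\mfa)$, whose integration yields the multiplicative factor $G^c(\mfa)/G^c(\mfa-t)$, and the axis $\mfa=0$ supplies the data for \eqref{eqn-bar-mfi-s2-g}. Inserting these formulas back into \eqref{eqn-mfi-PDE-BC1-g} (together with \eqref{eqn-barS-PDE}) produces the closed equation \eqref{eqn-mfi-PDE-BC2-g} for the boundary trace $u(t,x):=\bar{\mfi}(t,0,x)$. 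Under Assumptions \ref{AS-LLN-1}--\ref{AS-lambda} the right-hand side of \eqref{eqn-mfi-PDE-BC2-g} is Lipschitz in $u\in L^\infty([0,T]; L^1([0,1]))$ via the bounds $\bar{\lambda}\le\lambda^*$, $c_B\le\bar{B}\le C_B$ and $\int_0^1\beta(x,y)\,dy\le C_\beta$, so a standard Picard/Gronwall iteration yields a unique non-negative solution on every finite interval $[0,T]$; existence is guaranteed because $u(t,x)=\bar\Upsilon(t,x)$ from Theorem \ref{thm-FLLN} is one. The main obstacle is the measure-theoretic bookkeeping: one must consistently use the left-continuous survival function $G^c$ in writing $\nu(d\mfa)/G^c(\mfa)$ and in integrating along characteristics, so that the ratio $G^c(\mfa)/G^c(\mfa-t)$ is well defined (with the convention that the quotient is set to $0$ whenever $G^c(\mfa-t)$ vanishes) and the PDE \eqref{eqn-mfi-PDE-g} stays consistent with \eqref{eqn-barsI-tax} even for singular $\nu$ such as $\nu=\delta_{t_0}$.
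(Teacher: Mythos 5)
Your proposal follows essentially the same route as the paper: obtain the characteristic representations \eqref{eqn-bar-mfi-s1-g}--\eqref{eqn-bar-mfi-s2-g}, verify the transport equation \eqref{eqn-mfi-PDE-g} along the characteristics $\mfa-t=\mathrm{const}$, rewrite the boundary term through \eqref{eqn-barmfF-tx} and \eqref{eqn-barUpsilon-tx} using change of variables, and then argue uniqueness for the fixed point \eqref{eqn-mfi-PDE-BC2-g}. The small organisational difference is that you differentiate the integral equation \eqref{eqn-barsI-tax-g} directly in $\mfa$ to get the representations and then verify the PDE from them, whereas the paper first computes $\partial_t\bar\sI + \partial_\mfa\bar\sI$, then differentiates in $\mfa$ to arrive at the intermediate equation \eqref{eqn-mfi-PDE-1-g}, and only afterwards integrates along characteristics; your ordering is arguably cleaner and equally valid.

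The one step that is genuinely underargued is uniqueness for \eqref{eqn-mfi-PDE-BC2-g}. The right-hand side is a product of two affine functionals of $u=\bar{\mfi}(\cdot,0,\cdot)$ and hence quadratic, so it is not Lipschitz on all of $L^\infty([0,T];L^1([0,1]))$ as you assert; a Gronwall/Picard argument only works after one restricts to a bounded set. The paper supplies the missing ingredient: an a priori $L^\infty$ bound on any nonnegative solution, obtained by noting that (with $u\ge0$ and $\bar S(0,x)\le\bar B(x)$) the first factor is at most $1$, whence
\begin{equation*}
\|u(t,\cdot)\|_\infty \le C_\beta\lambda^\ast\Big(1+\int_0^t\|u(s,\cdot)\|_\infty ds\Big)\,,
\end{equation*}
and Gronwall gives $\|u(t,\cdot)\|_\infty\le C_\beta\lambda^\ast e^{C_\beta\lambda^\ast t}$. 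Only then does the paper run a second Gronwall argument in $L^1(dx)$ for the difference of two solutions, using both inequalities of \eqref{eqn-C-beta}. You should insert this a priori estimate before invoking any Lipschitz/contraction argument. Finally, you appeal to Theorem \ref{thm-FLLN} for existence of a nonnegative solution; this is logically admissible, but the paper obtains existence independently via Picard iteration, which keeps the Proposition self-contained as a PDE statement.
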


Given  the PDE solution $\bar{\mfi}(t,\mfa,x)$ and $\bar\Upsilon(t,x)=\bar\mfi(t,0,x)$, the functions $\bar{I}(t,x)$ and $\bar{R}(t,x)$ are given by 
\begin{align*}
\bar{I}(t,x) &=\int_0^{\infty} \frac{F^c(\mfa'+t)}{F^c(\mfa')} \, \bar{\mfi}(0, \mfa', x)d \mfa'  + \int_{0}^t F^c(t-s) \, \bar\mfi(s,0,x) ds\,,\\
\bar{R}(t,x) &=  \bar{R}(0,x) + \int_0^{\infty}\Big(1- \frac{F^c(\mfa'+t)}{F^c(\mfa')} \Big) \, \bar{\mfi}(0, \mfa', x)  d \mfa' + \int_0^t F(t-s)\, \bar\mfi(s,0,x)  ds \,.
\end{align*}
Also, by definition,
\[
\bar{I}(t,x) = \bar{\sI}(t,\infty,x) = \int_0^\infty \bar\mfi(t,\mfa,x)d\mfa. 
\]

\begin{proof}
Using the expression of $ \bar\Upsilon(s,x)= \bar{\sI}_\mfa(s,0, x)$ in \eqref{eqn-barUpsilon-tx} and with $F^c$, we can equivalently rewrite \eqref{eqn-barsI-tax} as
\begin{align}\label{eqn-barsI-tax-g} 
\bar{\sI}(t,\mfa, x) &=\int_0^{(\mfa-t)^+} \frac{F^c(\mfa'+t)}{F^c(\mfa')} \bar{\sI}_\mfa(0, \mfa', x) d \mfa' + \int_{(t-\mfa)^+}^t F^c(t-s)  \bar{\sI}_\mfa(s,0, x)   ds \,. 
\end{align}

%

Exploiting the fact that $\frac{\partial}{\partial t}+\frac{\partial}{\partial \mfa}$ of a function of $t-\mfa$ vanishes,  we deduce from \eqref{eqn-barsI-tax-g} that 
\begin{align*}
\bar{\sI}_t(t,\mfa, x) +\bar{\sI}_\mfa(t,\mfa, x)  &=  - \int_0^{(\mfa-t)^+} \frac{1}{F^c(\mfa')} \bar{\sI}_\mfa(0,\mfa', x)F( t+d\mfa')    \\
& \quad + \bar{\sI}_\mfa(t,0, x) - \int_{(t-\mfa)^+}^t   \bar{\sI}_\mfa(s,0, x)  F(t-ds) \\
&=  - \int_t^{\mfa\vee t} \frac{1}{F^c(\mfa'-t)} \bar{\sI}_\mfa(0,\mfa'-t, x)F(d\mfa')    \\
& \quad + \bar{\sI}_\mfa(t,0, x) - \int_{0}^{\mfa\wedge t}   \bar{\sI}_\mfa(t-s,0, x)  F(ds)\,. 
\end{align*}
We then take derivative with respect to $\mfa$ on both sides of this equation (denoting $\bar{\sI}_{t,\mfa}(t,\mfa, x)$  and $\bar{\sI}_{\mfa,\mfa}(t,\mfa, x)$ as the derivatives of $\bar{\sI}_t(t,\mfa, x)$ and $\bar{\sI}_\mfa(t,\mfa, x) $ with respect to $\mfa$) and obtain the following: 
\begin{align*}
\bar{\sI}_{t,\mfa}(t,\mfa, x) +\bar{\sI}_{\mfa,\mfa}(t,\mfa, x)  &= -  {\bf1}_{\mfa \ge t}  \frac{F(d\mfa)}{F^c(\mfa-t)} \bar{\sI}_\mfa(0,\mfa-t, x)   - {\bf1}_{t>\mfa} F(d\mfa)  \bar{\sI}_\mfa(t-\mfa,0, x) \,. 
\end{align*}
Rewriting $\frac{\partial \bar{\mfi}(t,\mfa,x)}{\partial t}  =\bar{\sI}_{\mfa,t}(t,\mfa, x) = \bar{\sI}_{t,\mfa}(t,\mfa, x)$ and $\frac{\partial \bar{\mfi}(t,\mfa,x)}{\partial \mfa} = \bar{\sI}_{\mfa,\mfa}(t,\mfa, x)$, we obtain the PDE: 
\begin{align} \label{eqn-mfi-PDE-1-g}
\frac{\partial \bar{\mfi}(t,\mfa,x)}{\partial t} +\frac{\partial \bar{\mfi}(t,\mfa,x)}{\partial \mfa} 
 &= -  {\bf1}_{\mfa \ge t}  \frac{F(d\mfa)}{F^c(\mfa-t)} \, \bar{\mfi}(0,\mfa-t, x)   - {\bf1}_{t>\mfa} F(d\mfa)  \,\bar{\mfi}(t-\mfa,0, x) \,.
\end{align}
 In order to see that the right hand side coincides with that in \eqref{eqn-mfi-PDE-g}, we
first establish \eqref{eqn-bar-mfi-s1-g} and \eqref{eqn-bar-mfi-s2-g}. 
For $\mfa \ge t$, $0 \le s \le t$ and $x\in [0,1]$, 
\begin{align*}
\frac{d \bar{\mfi}(s,\mfa-t+s,x)}{d s}  = - \frac{F(\mfa-t+ds)}{F^c(\mfa-t)}\, \bar\mfi(0,\mfa-t,x)\,,
\end{align*}
and for $t>\mfa$, $0 \le s \le \mfa$ and $x \in [0,1]$,
\begin{align*}
\frac{d \bar{\mfi}(t-\mfa+s,s,x)}{d s}  = -F(ds) \,  \bar\mfi(t-\mfa,0,x)\,. 
\end{align*}
From these, by integration and simple calculations, we obtain 
 \eqref{eqn-bar-mfi-s1-g} and \eqref{eqn-bar-mfi-s2-g}. Now \eqref{eqn-mfi-PDE-g}  follows from \eqref{eqn-mfi-PDE-1-g}, \eqref{eqn-bar-mfi-s1-g}  and \eqref{eqn-bar-mfi-s2-g}. 

Then using \eqref{eqn-bar-mfi-s1-g} and \eqref{eqn-bar-mfi-s2-g}, by \eqref{eqn-barmfF-tx} and the second equality in \eqref{eqn-barUpsilon-tx}, we obtain
\begin{align} \label{eqn-mfF-tx-i}
\bar{\mathfrak{F}}(t,x) &=\int_0^\infty  \bar{\lambda}(\mfa+t)  \, \frac{ \bar{\mfi}(0,\mfa, x)}{F^c(a)}d \mfa + \int_0^t  \bar{\lambda}(t-s) \, \bar{\mfi}(s,0,x) ds\,.  
\end{align} 
The expression for the boundary condition in \eqref{eqn-mfi-PDE-BC2-g} then follows directly from 
 \eqref{eqn-barUpsilon-tx} using this expression of $\bar{\mathfrak{F}}(t,x)$. Again, using \eqref{eqn-bar-mfi-s1-g} and \eqref{eqn-bar-mfi-s2-g}, we see that the boundary condition  \eqref{eqn-mfi-PDE-BC2-g} is equivalent to  \eqref{eqn-mfi-PDE-BC1-g}. 
 
We now sketch the proof of existence and uniqueness of  a non-negative solution to \eqref{eqn-mfi-PDE-BC2-g}. Note that, thanks to 
\eqref{eqn-bar-mfi-s1-g}  and \eqref{eqn-bar-mfi-s2-g}, existence and uniqueness of a non-negative solution to the PDE \eqref{eqn-mfi-PDE-g}-\eqref{eqn-mfi-PDE-BC1-g} will follow from that result. 
First of all, let us rewrite that equation as
\[ u(t,x)=(\bar{B}(x))^{-1}\left(f(x)-\int_0^t u(s,x)ds\right)\times\int_0^1\beta(x,x')\left(g(t,x')+\int_0^t\bar{\lambda}(t-s)u(s,x')ds\right)dx',\]
where $0\le f(x)\le\bar{B}(x)$ and $0\le g(t,x)\le \lambda^\ast \bar{B}(x)$ are given from the initial conditions. Any nonnegative solution satisfies
\begin{align*}
u(t,x)&\le\int_0^1\beta(x,x')\left(g(t,x')+\int_0^t\bar{\lambda}(t-s)u(s,x')ds\right)dx',\ \text{hence}\\
\|u(t,\cdot)\|_\infty&\le C_\beta\lambda^\ast\left(C_B+\int_0^t \|u(s,\cdot)\|_\infty ds\right)\\
&\le C_\beta\lambda^\ast C_Be^{C_\beta\lambda^\ast t}\,.
\end{align*}
Here $\|u(t,\cdot)\|_\infty = \sup_{x\in [0,1]} |u(t,x)|$. 
Let now $u$ and $v$ be two non negative solutions. Then, 
\begin{align*}
|u(t,x)-v(t,x)|&\le  (\bar{B}(x))^{-1}\left(\int_0^1\beta(x,x')\left[g(t,x')+\int_0^t\bar{\lambda}(t-s) u(s,x')ds\right]dx' \right)\int_0^t|u(s,x)-v(s,x)|ds\\&\quad
+ (\bar{B}(x))^{-1} \left(f(x)+\int_0^tv(s,x)ds\right)\int_0^1\beta(x,x')\int_0^t\bar{\lambda}(t-s) |u(s,x')-v(s,x')|dsdx'\,.
\end{align*}
Integrating over $dx$, exploiting the previous a priori estimate and \eqref{eqn-C-beta}, we deduce the uniqueness from Gronwall's Lemma. Finally, the existence of a nonnegative $L^1([0,1])$-valued solution  can be established using a Picard iteration argument. Note that in the previous lines we have used the two distinct inequalities contained in \eqref{eqn-C-beta}.
\end{proof}

If $F$ is absolutely continuous, with density $f$, we denote by $h(\mfa)$ the hazard rate function, i.e., $h(\mfa) = \frac{f(\mfa)}{F^c(\mfa)}$ for all $\mfa\ge 0$.  We obtain the following corollary in this case.

\begin{coro} \label{coro-PDE-ac}
Under the assumptions of Proposition \ref{prop-PDE-g}, if $F$ is absolutely continuous with density $f$, then the PDE in \eqref{eqn-mfi-PDE-g} becomes 
\begin{equation} \label{eqn-mfi-PDE}
\frac{\partial \bar{\mfi}(t,\mfa,x)}{\partial t} + \frac{\partial \bar{\mfi}(t,\mfa,x)}{\partial \mfa} = - h(\mfa) \, \bar{\mfi}(t,\mfa,x) \,,
\end{equation}
with the initial condition $\bar\mfi(0,\mfa,x)=\bar\sI_\mfa(0,\mfa,x)$ for $(\mfa,x)\in (0,\infty) \times [0,1]$
and  the boundary condition  \eqref{eqn-mfi-PDE-BC1-g}.
The function $\bar{S}(t,x)$ satisfies \eqref{eqn-barS-PDE}, and 
the PDE \eqref{eqn-mfi-PDE} has a unique solution which is given 
 by \eqref{eqn-bar-mfi-s1-g}  and \eqref{eqn-bar-mfi-s2-g},
and the boundary function is the unique solution of  \eqref{eqn-mfi-PDE-BC2-g}.
\end{coro}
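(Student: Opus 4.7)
The plan is to obtain Corollary \ref{coro-PDE-ac} as a direct specialization of Proposition \ref{prop-PDE-g}, so the strategy is to identify how the objects in \eqref{eqn-mfi-PDE-g}--\eqref{eqn-mfi-PDE-BC1-g} simplify under the extra hypothesis that $F$ is absolutely continuous with density $f$. The key observation is that such an $F$ is continuous, so the left-continuous version $G$ introduced before Remark \ref{GvsF} coincides with $F$; in particular $G^c(\mfa)=F^c(\mfa)$ and the law $\nu$ of $\eta$ is absolutely continuous with $\nu(d\mfa)=f(\mfa)\,d\mfa$. Consequently the ``generalized hazard rate'' appearing on the right-hand side of \eqref{eqn-mfi-PDE-g} is nothing but the classical hazard function times Lebesgue measure,
\[
\frac{\nu(d\mfa)}{G^c(\mfa)} \;=\; \frac{f(\mfa)}{F^c(\mfa)}\,d\mfa \;=\; \mu(\mfa)\,d\mfa,
\]
so that what was a distributional identity in Proposition \ref{prop-PDE-g} becomes a pointwise identity.

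Substituting this into \eqref{eqn-mfi-PDE-g}, the right-hand side becomes $-\mu(\mfa)\,\bar{\mfi}(t,\mfa,x)$ and one recovers exactly the transport equation \eqref{eqn-mfi-PDE}. The initial condition is unchanged, and the boundary condition \eqref{eqn-mfi-PDE-BC1-g} involves only $G^c$, which equals $F^c$ here, so it is already of the stated form. Equation \eqref{eqn-barS-PDE} for $\bar{S}(t,x)$ is inherited verbatim, as it was derived from the first equality in \eqref{eqn-barUpsilon-tx} and does not depend on regularity of $F$.

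For the solution formulas \eqref{eqn-bar-mfi-s1-g}--\eqref{eqn-bar-mfi-s2-g}, I would observe that they are unchanged (since $G^c=F^c$) and, for transparency, sanity-check them by the method of characteristics: along the line $\mfa-t=c>0$, equation \eqref{eqn-mfi-PDE} reduces to the ODE $\tfrac{d}{dt}\bar\mfi(t,t+c,x)=-\mu(t+c)\bar\mfi(t,t+c,x)$, whose solution is $\exp\bigl(-\int_{c}^{c+t}\mu(s)\,ds\bigr)\bar\mfi(0,c,x) = \tfrac{F^c(t+c)}{F^c(c)}\bar\mfi(0,c,x)$, matching \eqref{eqn-bar-mfi-s1-g}; a symmetric computation along $t-\mfa=c>0$, starting from the boundary at $\mfa=0$, yields \eqref{eqn-bar-mfi-s2-g}. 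Finally the boundary integral equation \eqref{eqn-mfi-PDE-BC2-g} is word-for-word the same, and its existence and uniqueness of a nonnegative $L^1([0,1])$-valued solution follow from exactly the a priori estimate, Gronwall and Picard iteration argument already carried out at the end of the proof of Proposition \ref{prop-PDE-g}; no modification is required because that argument used only \eqref{eqn-C-beta}, boundedness of $\bar\lambda$ by $\lambda^\ast$, and $\bar B\ge c_B>0$. There is no real obstacle in this corollary: the only substantive point is the identification $\nu(d\mfa)/G^c(\mfa)=\mu(\mfa)\,d\mfa$ under absolute continuity; everything else is inherited directly from Proposition \ref{prop-PDE-g}.
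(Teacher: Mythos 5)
Your proof is correct and takes precisely the route the paper intends: the paper states this corollary without proof, treating it as an immediate specialization of Proposition \ref{prop-PDE-g}, and your identification $G^c=F^c$ and $\nu(d\mfa)/G^c(\mfa)=\mu(\mfa)\,d\mfa$ under absolute continuity is exactly the substantive point, with the initial/boundary conditions, solution formulas, and the Gronwall--Picard argument inherited verbatim. The characteristics sanity-check is a nice addition but not needed for the claim.
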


When the infectious periods are deterministic, using an approximation of the Dirac measure by continuous distributions,  we obtain the following corollary.

\begin{coro} \label{coro-PDE-det}
Suppose that the infectious periods are deterministic and equal to $t_i$, that is, $F(t) ={\bf1}_{t\ge t_i}$. Then
the PDE  in \eqref{eqn-mfi-PDE-g} becomes 
\begin{equation} \label{eqn-mfi-PDE-det}
\frac{\partial \bar{\mfi}(t,\mfa,x)}{\partial t} + \frac{\partial \bar{\mfi}(t,\mfa,x)}{\partial \mfa} = - \delta_{t_i}(\mfa) \, \bar{\mfi}(t,\mfa,x) \,,
\end{equation}
with $\delta_{t_i}(d\mfa)$ being the Dirac measure at $t_i$, 
with the initial condition  $\bar\mfi(0,\mfa,x)=\bar\sI_\mfa(0,\mfa,x)$ for $(\mfa,x)\in (0, t_i) \times [0,1]$, and the boundary condition 
\begin{align} \label{eqn-mfi-PDE-BC1-det}
\bar\mfi(t,0,x) =  \frac{\bar{S}(t,x)}{\bar{B}(x)} \int_0^1 \beta(x,x') \Bigg(\int_0^{t_i} \bar\lambda(\mfa') \, \bar\mfi(t,\mfa',x') d \mfa' \Bigg) d x' \,.
\end{align}
The PDE \eqref{eqn-mfi-PDE-det} has a unique solution which is given as follows: 
for $t \le \mfa < t_i$ and $x \in [0,1]$, 
\begin{equation} \label{eqn-bar-mfi-s1-det}
\bar\mfi(t,\mfa,x) = \bar\mfi(0, \mfa-t, x), 
\end{equation}
and for $\mfa< t \wedge t_i$ and $x\in [0,1]$,
\begin{equation}\label{eqn-bar-mfi-s2-det}
\bar\mfi(t,\mfa,x) =  \bar\mfi(t-\mfa,0, x), 
\end{equation}
and for $\mfa \ge t_i$ and $t\ge 0$, $\bar\mfi(t,\mfa,x)=0$.  
The boundary function is the unique solution to the integral equation: for $0 <t<t_i$,
\begin{align}\label{eqn-mfi-PDE-BC2-det1}
\bar{\mfi}(t,0,x) & = \bar{B}(x)^{-1}\Big(  \bar{S}(0,x) - \int_0^t \bar{\mfi}(s,0,x) ds \Big)  \non \\
& \quad \times \int_0^1 \beta(x,x') 
\left( \int_t^{t_i} \bar{\lambda}(\mfa) \, \bar{\mfi}(0,\mfa-t, x')d \mfa + \int_0^t  \bar{\lambda}(t-s) \,\bar{\mfi}(s,0,x') ds \right)  dx' \,, 
\end{align}
and for $t\ge t_i$,
\begin{align}\label{eqn-mfi-PDE-BC2-det2}
\bar{\mfi}(t,0,x) & =  \bar{B}(x)^{-1} \Big(  \bar{S}(0,x) - \int_0^t \bar{\mfi}(s,0,x) ds \Big)  \times \int_0^1 \beta(x,x') 
\int_0^{t_i}  \bar{\lambda}(t-s)\,  \bar{\mfi}(s,0,x') ds dx' \,. 
\end{align}

\end{coro}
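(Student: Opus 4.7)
The plan is to derive Corollary \ref{coro-PDE-det} as a direct specialization of Proposition \ref{prop-PDE-g}, with $F(t) = \mathbf{1}_{t \ge t_i}$. First I would identify the left-continuous version $G(t) = F(t^-) = \mathbf{1}_{t > t_i}$, so that $G^c(\mfa) = \mathbf{1}_{\mfa \le t_i}$ and the associated measure $\nu$ equals $\delta_{t_i}$. Note that $G^c(t_i) = 1$, which is precisely why Remark \ref{GvsF} insisted on working with $G^c$ rather than $F^c$. Substituting these into the right-hand side of \eqref{eqn-mfi-PDE-g} collapses it to $-\bar\mfi(t,\mfa,x)\,\delta_{t_i}(\mfa)$, yielding \eqref{eqn-mfi-PDE-det}. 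For the boundary condition \eqref{eqn-mfi-PDE-BC1-det}, I would inspect the integrand in \eqref{eqn-mfi-PDE-BC1-g}: the ratio $G^c(\mfa')/G^c(\mfa'-t)$ equals $1$ on $\{\mfa' \le t_i\}$, and the convention forces the integrand to zero on $\{\mfa' > t_i\}$; combined with $\bar\lambda$ being supported on $[0,t_i]$ since $\eta = t_i$ almost surely, the upper limit $t+\bar\mfa$ collapses to $t_i$.

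Next, I would plug the explicit values of $G^c$ into the solution formulas \eqref{eqn-bar-mfi-s1-g} and \eqref{eqn-bar-mfi-s2-g}. All $G^c$ factors either equal $1$ (on $\{\mfa < t_i\}$) or force $\bar\mfi(t,\mfa,x) = 0$ (on $\{\mfa > t_i\}$), which directly yields \eqref{eqn-bar-mfi-s1-det}, \eqref{eqn-bar-mfi-s2-det}, and the vanishing of $\bar\mfi$ for $\mfa \ge t_i$. The boundary integral equations \eqref{eqn-mfi-PDE-BC2-det1} and \eqref{eqn-mfi-PDE-BC2-det2} would be obtained from \eqref{eqn-mfi-PDE-BC2-g} by the change of variables $\mfa' = \mfa + t$ in the initial-condition integral together with the support constraint $\mfa' < t_i$: for $0 < t < t_i$ this produces $\int_t^{t_i} \bar\lambda(\mfa)\,\bar\mfi(0,\mfa-t,x')\,d\mfa$, while for $t \ge t_i$ this integral vanishes because all initial infectees have recovered, leaving only the convolution term. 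Existence and uniqueness of a non-negative solution are inherited verbatim from the Gronwall and Picard iteration argument in Proposition \ref{prop-PDE-g}, which depends only on $\bar\lambda \le \lambda^*$ and Assumption \ref{AS-LLN-2}.

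The main subtlety, and the one place the proof requires genuine care rather than substitution, is the interpretation of the singular source term $-\bar\mfi(t,\mfa,x)\,\delta_{t_i}(\mfa)$ in the PDE together with the convention ``integrand set to zero whenever $G^c(\mfa)=0$'' in the boundary condition. Verifying that these conventions yield a well-posed problem at the single atom $\mfa = t_i$ is essentially what Remark \ref{GvsF} prepares us for, and it is what makes the purely deterministic case fit smoothly within the framework of Proposition \ref{prop-PDE-g} rather than needing to be treated ad hoc.
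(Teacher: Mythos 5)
The paper states Corollary \ref{coro-PDE-det} without proof, treating it as a direct specialization of Proposition \ref{prop-PDE-g}, and your proposal carries out exactly that specialization correctly: identifying $G^c(\mfa) = \mathbf{1}_{\mfa \le t_i}$, $\nu = \delta_{t_i}$, $G^c(t_i) = 1$, substituting into \eqref{eqn-mfi-PDE-g}, \eqref{eqn-mfi-PDE-BC1-g}, \eqref{eqn-bar-mfi-s1-g}--\eqref{eqn-bar-mfi-s2-g} and \eqref{eqn-mfi-PDE-BC2-g}, and invoking the convention/Remark \ref{GvsF} to resolve the $0/0$ at $\mfa = t_i$. One small point worth flagging: your change-of-variables argument for the convolution term at $t \ge t_i$ actually yields $\int_0^{t_i}\bar\lambda(s)\,\bar\mfi(t-s,0,x')\,ds$ (equivalently $\int_{t-t_i}^t\bar\lambda(t-s)\,\bar\mfi(s,0,x')\,ds$), whereas \eqref{eqn-mfi-PDE-BC2-det2} as printed has the arguments of $\bar\lambda$ and $\bar\mfi$ swapped relative to that; your derivation is the internally consistent one, so this appears to be a typo in the corollary's statement rather than a gap in your argument.
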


\begin{remark} \label{rem-special} 
In the special case when $\lambda_i(t) =\tilde\lambda(t){\bf1}_{t<\eta_i}$ for a deterministic function $\tilde\lambda(t)$, the boundary condition \eqref{eqn-mfi-PDE-BC1-g} \ becomes
\begin{align} \label{eqn-mfi-PDE-BC1-special}
\bar\mfi(t,0,x) = \frac{\bar{S}(t,x)}{\bar{B}(x)} \int_0^1 \beta(x,x') \Bigg(\int_0^{\infty} \tilde\lambda(\mfa') \, \bar\mfi(t,\mfa',x') d \mfa' \Bigg) d x' \,.
\end{align}
This is because 
$\bar\lambda(t) =  \tilde{\lambda}(t) F^c(t)$.
This boundary condition resembles that given in the Diekmann  PDE model \cite{diekmann1978thresholds} (without $\bar{B}(x)$ in the denominator). See further discussions in Remark \ref{rem-Diekman-Kendall}.  
We remark that the PDE model first proposed by Kermack and McKendrick in \cite{KM32} also corresponds to this special infectivity function $\lambda_i(t) =\tilde\lambda(t){\bf1}_{t<\eta_i}$; see further discussions on the PDE models with infection-age dependent infectivity in Remarks 3.3 and 3.4 of \cite{PP-2021}. 
\end{remark}

\begin{remark}
By using the solution expressions in \eqref{eqn-bar-mfi-s1-g} and \eqref{eqn-bar-mfi-s2-g} together with the second identity $\bar\Upsilon(t,x) = \bar{\sI}_\mfa(t,0, x) $  in \eqref{eqn-barUpsilon-tx}, we can rewrite $\bar{\mathfrak{F}}(t,x)$ in \eqref{eqn-barmfF-tx} as 
\begin{align} \label{eqn-barmfF-tx-2}
\bar{\mathfrak{F}}(t,x) &=\int_0^\infty \bar{\lambda}(\mfa+t)  \, \frac{ \bar{\mfi}(0, \mfa, x)}{F^c(\mfa)} d \mfa+ \int_0^t  \bar{\lambda}(t-s)  \, \bar{\mfi}(s,0, x) ds \non\\
&= \int_0^\infty   \bar{\lambda}(\mfa+t)     \frac{1}{F^c(t+\mfa)}  \, \bar{\mfi}(t, t+ \mfa, x) d \mfa+ \int_0^t  \bar{\lambda}(\mfa) \frac{1}{F^c(\mfa)}\, \bar{\mfi}(t,\mfa, x) d\mfa \non \\
&= \int_0^{\infty} \frac{1}{F^c(\mfa)} \bar\lambda(\mfa) \, \bar{\mfi}(t,\mfa, x) d\mfa\,. 
\end{align}
In the special case when $\lambda_i(t) =\tilde\lambda(t){\bf1}_{t<\eta_i}$ as described in the previous remark, 
we obtain 
\begin{align} \label{eqn-barmfF-tx-2-special}
\bar{\mathfrak{F}}(t,x) = \int_0^{\infty} \tilde\lambda(\mfa) \, \bar{\mfi}(t,\mfa, x) d\mfa\,, 
\end{align}
which further gives 
\begin{align} \label{eqn-barUpsilon-tx-2-special}
\bar\Upsilon(t,x) & = \frac{\bar{S}(t,x)}{\bar{B}(x)} \int_0^1 \beta(x,x') \int_0^{\infty} \tilde\lambda(\mfa) \, \bar{\mfi}(t,\mfa, x') d\mfa dx' \non \\
&=\frac{\bar{S}(t,x)}{\bar{B}(x)}   \int_0^{\infty} \int_0^1 \beta(x,x') \tilde\lambda(\mfa) \, \bar{\mfi}(t,\mfa, x')  dx'd\mfa \,.
\end{align}
\end{remark} 

\begin{remark} \label{rem-Diekman-Kendall}
In Diekmann  \cite{diekmann1978thresholds}, the spatial-temporal deterministic model is specified as follows.
The function $\bar{I}(t,x)$ is written as an integral of the function $\bar{\mfi}(t, \mfa,x)$: 
\[
\bar{I}(t,x) = \int_0^\infty \bar{\mfi}(t, \mfa,x) d\mfa\,.
\]
The infectivity function   is given by 
\begin{equation}  \label{eqn-barUpsilon-Diekman}
\bar\Upsilon(t,x) = \bar{S}(t,x)\int_0^\infty \int_0^1 \bar{\mfi}(t, \mfa,x') A(\mfa, x, x') dx' d\mfa\,,
\end{equation}
where $A(\mfa, x, x')$ is the infectivity at $x$ due to the infected individual with the infection age $\mfa$ at $x'$. (Note the difference of $\bar\Upsilon(t,x)$ in \eqref{eqn-barUpsilon-Diekman} from our limit $\bar\Upsilon(t,x)$ in \eqref{eqn-barUpsilon-tx-2-special} with $\bar{B}(x)$ in the denominator, and abusing notation we use the same symbols in this remark).
Therefore, in order to match the model by Diekmann  \cite{diekmann1978thresholds}, we can take 
\begin{equation} \label{eqn-A-beta}
A(\mfa, x,x') = \beta(x,x')  \bar\lambda(\mfa). 
\end{equation}

By \eqref{eqn-barS-PDE} and \eqref{eqn-mfi-PDE-BC2-g}, we obtain
\begin{align} \label{eqn-barS-tx-D1}
 \frac{\partial \bar{S}(t,x)}{ \partial t}
 & =  - \frac{\bar{S}(t,x) }{\bar{B}(x)} \int_0^\infty  \int_0^1 \beta(x,x') 
\bar{\lambda}(\mfa+t)\, \bar{\mfi}(0,\mfa, x') dx' d \mfa   \non\\
& \quad  - \frac{\bar{S}(t,x)}{\bar{B}(x)}   \int_0^t \int_0^1 \beta(x,x') 
   \bar{\lambda}(t-s)  \, \bar{\mfi}(s,0,x')   dx'  ds  \non  \\
   &  =   \frac{\bar{S}(t,x) }{\bar{B}(x)}    \bigg( \int_0^t \int_0^1 \beta(x,x') 
   \bar{\lambda}(\mfa)  \frac{\partial \bar{S}(t-\mfa,x')}{ \partial t}   dx'  d\mfa - h(t,x)\bigg)\,,
\end{align} 
where
\[
h(t,x) =  \int_0^\infty  \int_0^1 \beta(x,x') 
\bar{\lambda}(\mfa+t)  \, \bar{\mfi}(0,\mfa, x') dx' d \mfa\,. 
\]
Then integrating \eqref{eqn-barS-tx-D1} with respect to $t$, we can calculate $u(t,x) = - \ln \frac{\bar{S}(t,x)}{\bar{S}(0,x)}$.
  If one were to assume $\bar{B}(x)$=1 in the denominator of \eqref{eqn-barS-tx-D1}, then we would obtain 
\[
u(t,x) = - \ln \frac{\bar{S}(t,x)}{\bar{S}(0,x)} = \int_0^t \int_0^1 (1- e^{-u(t-\mfa,x')}) \bar{S}(0, x')  \beta(x,x') 
   \bar{\lambda}(\mfa)   dx' d\mfa + \int_0^t h(s,x) ds,
\]
and next using \eqref{eqn-A-beta}, we would then obtain the specification of $u(t,x)$ in  \cite{diekmann1978thresholds}.


Moreover, if the infection rate is the constant $\lambda$ and the infectious periods are exponential of rate $\mu$, 
we have $\bar{\mathfrak{F}}(t,x) = \lambda \bar{I}(t,x) $, and as a result, the infectivity function of Diekmann in \eqref{eqn-barUpsilon-Diekman} becomes 
\begin{align} \label{eqn-barUpsilon-Kendall}
\bar\Upsilon(t,x) = \bar{S}(t,x)  \int_0^1 \beta(x,x') \lambda \bar{I}(t,x')dx'\,.
\end{align}
Because of the memoryless property of exponential periods, it is adequate to use the process $I(t,x)$ to describe the dynamics instead of $\sI(t,\mfa,x)$. In this case, we obtain
Kendall's spatial model \cite{kendall1957,kendall1965}, in which given the limit $\bar\Upsilon(t,x) $ in \eqref{eqn-barUpsilon-Kendall}, 
 \begin{align} \label{eqn-Kendall-ODE-model}
 \frac{\partial \bar{S}(t,x)}{\partial t} = -\bar\Upsilon(t,x), \quad     \frac{\partial \bar{I}(t,x)}{\partial t} =\bar\Upsilon(t,x) - \mu \bar{I}(t,x), \quad     \frac{\partial \bar{R}(t,x)}{\partial t}  = \mu \bar{I}(t,x)\,. 
 \end{align}

\end{remark}

\begin{remark} \label{rem-SIS-PDE}

Recall the spatial SIS model in Remark \ref{rem-SIS}. We obtain the same PDE in \eqref{eqn-mfi-PDE-g} with the boundary condition in \eqref{eqn-mfi-PDE-BC1-g}, in which $\bar{S}(t,x)$ is the solution to \eqref{eqn-barS-PDE} with $\bar{S}(0,x)$ satisfying $\int_0^1 (\bar{S}(0,x)+\bar{I}(0,x)) dx =1$ and $\bar{S}(0,x) = \bar{B}(x)-\bar{I}(0,x)$. 
The solution to the PDE is also given by \eqref{eqn-bar-mfi-s1-g}--\eqref{eqn-bar-mfi-s2-g} with the boundary condition in \eqref{eqn-mfi-PDE-BC1-g},
with $\bar{S}(0,x)$  mentioned above.
Similarly, we also obtain the expression of $\bar{\mathfrak{F}}(t,x)$ in \eqref{eqn-barmfF-tx-2}.

In the Markovian case with a constant infection rate $\lambda$ and recovery rate $\mu$, our model reduces to the following ODE with a spatial parameter:  
\begin{align} \label{eqn-SIS-Markov-ODE}
  \frac{\partial \bar{I}(t,x)}{\partial t} =\lambda \bar{S}(t,x)  \int_0^1 \beta(x,x') \bar{I}(t,x')dx' - \mu \bar{I}(t,x)
\end{align}
with $ \bar{S}(t,x)$ satisfying $\int_0^1 (\bar{S}(t,x) + \bar{I}(t,x)) dx=1$ $\bar{B}(x) = \bar{S}(t,x) + \bar{I}(t,x)$ for each $t\ge 0$. (This can be also seen from \eqref{eqn-Kendall-ODE-model} and \eqref{eqn-barUpsilon-Kendall}.)
This resembles  the ODE limit with a spatial parameter as established by Keliger et al. \cite{keliger2022local} for the finite-state Markov SIS model on a sampled graph from graphon (since there is only one individual at each node of the graph, $\bar{S}(t,x)$ in \eqref{eqn-SIS-Markov-ODE} is replaced by $1-\bar{I}(t,x)$, see equation (10) in that paper).

Returning to the integral limit for the spatial SIS model in Remark \ref{rem-SIS}, we assume that $\lim_{t\to\infty}\bar{\sI}(t,\mfa, x)$ exists and the limit is denoted as $\bar{\sI}^*(\mfa, x)$, and let $\bar{I}^*(x) =\lim_{t\to\infty} \bar{I}(t,x) =\bar{\sI}^*(\infty, x)$. Also let $\bar{S}^*(x) =\lim_{t\to\infty} \bar{S}(t,x)$. Note that 
\begin{equation} \label{eqn-SI-eqlm}
\int_0^1 (\bar{S}^*(x) + \bar{I}^*(x) ) dx=1.
\end{equation}
 Let $\beta^{-1}= \int_0^\infty F^c(\mfa) d \mfa$ and $F_e(\mfa)= \beta  \int_0^\mfa F^c(s) ds$. 

By \eqref{eqn-bar-sI-SIS} and \eqref{eqn-barmfF-tx-2}, we obtain 
\begin{align} \label{eqn-bar-sI-SIS-eqlm}
\bar{\sI}^*(\mfa, x) 
&= \int_0^\mfa F^c(s)  ds\,  \bar{S}^*(x)  \int_0^1 \beta(x,x') \int_0^{\infty} \frac{1}{F^c(\mfa')} \bar\lambda(\mfa') \bar{\mathfrak{I}}^*(d\mfa', x')   dx'   \non\\
&= \beta^{-1} F_e(\mfa) \bar{S}^*(x)  \int_0^1 \beta(x,x') \int_0^{\infty} \frac{1}{F^c(\mfa')} \bar\lambda(\mfa') \bar{\mathfrak{I}}^*(d\mfa', x')   dx'\,. 
\end{align} 
By letting $\mfa\to\infty$ on the both sides, we obtain
\begin{align} 
\bar{I}^*(x) &= \beta^{-1} \bar{S}^*(x)  \int_0^1 \beta(x,x') \int_0^{\infty} \frac{1}{F^c(\mfa')} \bar\lambda(\mfa') \bar{\mathfrak{I}}^*(d\mfa', x')   dx' \,. 
\end{align} 
This implies 
\[
\bar{\sI}^*(\mfa, x) 
= F_e(\mfa) \bar{I}^*(x)\,,
\]
which then gives 
\[
\frac{\partial}{\partial \mfa}\bar{\sI}^*(\mfa, x) 
= \beta F^c(\mfa) \bar{I}^*(x) \,. 
\]
Thus,
\begin{align*}
\bar{\sI}^*(\mfa, x) 
&= \beta^{-1} F_e(\mfa) \bar{S}^*(x)  \int_0^1 \beta(x,x') \int_0^{\infty} \frac{1}{F^c(\mfa')} \bar\lambda(\mfa') \beta F^c(\mfa') \bar{I}^*(x') d \mfa'   dx'   \\
&=  F_e(\mfa)\Big( \int_0^{\infty}  \bar\lambda(\mfa') d \mfa'   \Big) \bar{S}^*(x)  \int_0^1 \beta(x,x')   \bar{I}^*(x')   dx'  \,. 
\end{align*} 
Define $\bar\Lambda= \int_0^{\infty}  \bar\lambda(t) d t$.  
By letting $\mfa\to\infty$ again on both sides, we obtain that the equilibrium $\bar{I}^*(x)$ must satisfy
\begin{align*}
\bar{I}^*(x) 
&=  \bar\Lambda (\bar{B}(x) -  \bar{I}^*(x)) \int_0^1 \beta(x,x')   \bar{I}^*(x')   dx' \,, \quad x \in [0,1]. 
\end{align*} 
This equation has a solution $\bar{I}^*(x) \equiv 0$, which is the disease-free equilibrium. Let us now discuss the existence of an endemic equilibrium. 
First, observe that if $\bar{B}(x)\equiv \bar{B}$ and $\beta(x,y) \equiv \beta$ are constant, then the equation reduces to $\bar{I}^* = \bar\Lambda (\bar{B} - \bar{I}^*) \beta \bar{I}^*$, which has a positive solution if and only if $\bar\Lambda\bar{B} \beta>1$.  
Next, we give a sufficient condition under which the integral equation (below $k(x,y)=\bar\Lambda \beta(x,y)$)
\[ u(x)=(\bar{B}(x)-u(x))\int_0^1k(x,y)u(y)dy\]
has a non zero solution. The condition reads:
\begin{equation}\label{conditendemic}
\inf_{x\in[0,1]}\bar{B}(x)\int_0^1k(x,y)dy>1\,.
\end{equation}

We first note that condition \eqref{conditendemic}  implies that there exists $\delta>0$ such that 
\[ \bar{B}(x)\int_0^1 k(x,y)dy\ge 1+\delta,\ \forall x\in[0,1]\,.\]
From this and the assumption that $\sup_x\int_0^1 k(x,y)dy<\infty$, we deduce that there exists $\theta>0$ such that
\[ ( \bar{B}(x)-\theta)\int_0^1 k(x,y) dy\ge1,\ \forall x\in [0,1]\,.\]

Let us now remark that $u:[0,1]\mapsto\R_+$ is a solution of our integral equation iff
\[ u=\Phi(u),\
\text{where }\Phi(u)(x)= \bar{B}(x)\frac{\int_0^1k(x,y) u(y)dy}{1+\int_0^1k(x,y) u(y)dy}\,.\]
The mapping $\Phi$ has the three following properties (below $\theta(x):\equiv\theta$):
\[ \Phi(\theta)\ge\theta,\quad \Phi( \bar{B})\le  \bar{B}\quad \text{ and }u\le v\Rightarrow \Phi(u)\le \Phi(v)\,.\]
We deduce readily from those properties that
\[ \theta\le \Phi(\theta)\le \Phi( \bar{B})\le  \bar{B}\,.\]
Exploiting the third property of $\Phi$, we can iterate our argument, and deduce that
\[ \theta\le \Phi(\theta)\le\cdots\le\Phi^{\circ(n-1)}(\theta)\le\Phi^{\circ n}(\theta)\le\Phi^{\circ n}( \bar{B})\le \Phi^{\circ(n-1)}( \bar{B})\le\cdots\le \Phi( \bar{B})\le  \bar{B}\,.\]
Hence both sequences $\Phi^{\circ n}(\theta)$ and $\Phi^{\circ n}( \bar{B})$ have a limit, which are positive solutions of our integral equations.
We conjecture that one should be able to establish uniqueness of a non zero solution, possible under slightly different conditions.


%

\end{remark}

\section{Some technical preliminaries} \label{sec-technical}

We will use the following convergence criteria for the processes: a) $X^N(t,x)$ in $\bD(\R_+, L^1)$  and b) $X^N(t,s,x)$ in $\bD(\R_+,\bD(\R_+, L^1))$. 
 They extend the convergence criterion for the processes in $\bD$ (the Corollary on page 83 of \cite{billingsley1999convergence})  and in $\bD_\bD$ (\cite[Theorem 4.1]{PP-2021}). 
 The proof is a straightforward extension of those results (in \cite{billingsley1999convergence} it is noted that with very little change, the theory can be extended to functions taking values in metric spaces that are separable and complete).  
We remark that one may also replace the $L_1$ norm $\|\cdot \|_1$ by the $L_2$ norm in the following results.

\begin{theorem} \label{thm-D-conv-x}
Let $\{X^N(t,x): N \ge 1\}$ be a sequence of random elements such that $X^N$ is in $\bD(\R_+, L^1)$. 
If the following two conditions are satisfied: for any $T>0$, 
\begin{itemize}
\item[(i)] for any $\ep>0$, $  \sup_{t \in [0,T]} \P \big(  \|X^N(t, \cdot)\|_{1}> \ep \big) \to 0$ as $N\to\infty$, and
\item[(ii)] for any $\ep>0$, as $\delta\to0$,
\begin{align*} 
& \limsup_N  \sup_{t\in [0,T]} \frac{1}{\delta}  \P \bigg(  \sup_{u \in [0,\delta]}\|X^N(t+u,\cdot) - X^N(t,\cdot)\|_1 > \ep\bigg) \to 0, 
\end{align*}
\end{itemize}
then  $\|X^N(t,\cdot) \|_1 \to 0 $ in probability, locally uniformly in $t$, as $N\to \infty$. 
\end{theorem}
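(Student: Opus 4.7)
The plan is to mimic the classical Billingsley argument for tightness and uniform convergence of c\`adl\`ag paths, now applied to the $L^1([0,1])$-valued setting. The only structural features I would use are the triangle inequality for $\|\cdot\|_1$ and right-continuity of $t\mapsto X^N(t,\cdot)$, so the proof is formally identical to the scalar case. I would fix $T>0$ and $\ep>0$, and for $\delta\in(0,1)$ set $N_\delta:=\lceil T/\delta\rceil$ together with $t_i:=i\delta$ for $i=0,\dots,N_\delta$. For any $t\in[0,T]$, picking the unique $i$ with $t_i\le t< t_{i+1}$ and applying the triangle inequality gives
$$
\|X^N(t,\cdot)\|_1\le \|X^N(t_i,\cdot)\|_1+\sup_{u\in[0,\delta]}\|X^N(t_i+u,\cdot)-X^N(t_i,\cdot)\|_1.
$$
Taking the supremum in $t$ on the left and a union bound on the right then yields
\begin{align*}
\P\Bigl(\sup_{t\in[0,T]}\|X^N(t,\cdot)\|_1>2\ep\Bigr)
&\le (N_\delta+1)\sup_{t\in[0,T]}\P\bigl(\|X^N(t,\cdot)\|_1>\ep\bigr)\\
&\quad+(N_\delta+1)\sup_{t\in[0,T]}\P\Bigl(\sup_{u\in[0,\delta]}\|X^N(t+u,\cdot)-X^N(t,\cdot)\|_1>\ep\Bigr).
\end{align*}

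I would then handle the two summands with the limits taken in opposite orders. Writing $\eta_N(\delta)$ for the quantity inside the $\limsup_N\sup_t$ in condition (ii) with threshold $\ep$, the second summand is at most $(N_\delta+1)\delta\,\eta_N(\delta)\le (T+\delta)\eta_N(\delta)$; taking first $\limsup_N$ and then $\delta\downarrow 0$ makes it vanish by (ii). With $\delta$ held fixed, the first summand involves only $N_\delta+1$ deterministic time points, so it tends to $0$ as $N\to\infty$ by condition (i). Combining these two steps yields $\limsup_{N\to\infty}\P\bigl(\sup_{t\in[0,T]}\|X^N(t,\cdot)\|_1>2\ep\bigr)=0$ for every $\ep>0$, which is exactly the asserted local uniform convergence in probability.

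I do not expect any real obstacle here; the one point requiring care is the order of the two limits. The grid size $\delta$ must be sent to $0$ \emph{after} letting $N\to\infty$ (so that the $\limsup_N$ in condition (ii) can be invoked), while at the same time $\delta$ must be held fixed \emph{before} $N\to\infty$ so that condition (i) can be applied to the finitely many time points $t_0,\dots,t_{N_\delta}$. Once this ordering is respected, the argument is essentially a line-by-line transcription of the classical result in \cite[Ch.~3]{billingsley1999convergence}, with the usual modulus of continuity $w''$ replaced by the $L^1$-valued modulus $\sup_{u\in[0,\delta]}\|X^N(t+u,\cdot)-X^N(t,\cdot)\|_1$; no separability or completeness issues arise because the norm $\|\cdot\|_1$ is measurable and the two conditions are stated directly in terms of it.
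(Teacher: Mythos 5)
Your proof is correct and is essentially the same argument the paper intends: the paper simply cites the Corollary on page 83 of \cite{billingsley1999convergence} and notes the extension to metric-space-valued c\`adl\`ag processes is routine, and your write-up is exactly that transcription, with the grid/union-bound decomposition, the $(N_\delta+1)\delta\le T+O(\delta)$ cancellation against the $1/\delta$ in condition (ii), and the correct ordering $N\to\infty$ before $\delta\to0$.
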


\begin{theorem} \label{thm-DD-conv-x}
Let $\{X^N: N \ge 1\}$ be a sequence of random elements such that $X^N$ is in
 $\bD(\R_+,\bD(\R_+, L^1))$. 
If the following two conditions are satisfied: for any $T, S>0$, 
\begin{itemize}
\item[(i)] for any $\ep>0$, $  \sup_{t \in [0,T]}\sup_{s\in [0,S]} \P \big(  \|X^N(t, s,\cdot)\|_1> \ep \big) \to 0$ as $N\to\infty$, and
\item[(ii)] for any $\ep>0$, as $\delta\to0$,
\begin{align*} 
& \limsup_N  \sup_{t\in [0,T]} \frac{1}{\delta}  \P \bigg(  \sup_{u \in [0,\delta]}\sup_{s \in [0,S]} \|X^N(t+u,s,\cdot) - X^N(t,s,\cdot)\|_1 > \ep\bigg) \to 0, \\
& \limsup_N  \sup_{s\in [0,S]} \frac{1}{\delta}  \P \bigg(  \sup_{v \in [0,\delta]}\sup_{t \in [0,T]} \|X^N(t,s+v,\cdot) - X^N(t,s,\cdot)\|_1 > \ep\bigg) \to 0, 
\end{align*}
\end{itemize}
then  $\|X^N(t,s,\cdot)\|_1 \to 0 $ in probability, locally uniformly in $t$ and $s$, as $N\to\infty$. 
\end{theorem}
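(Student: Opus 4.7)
My plan is to mirror the argument behind the one-parameter criterion in Theorem~\ref{thm-D-conv-x} (itself adapted from the Corollary on page~83 of \cite{billingsley1999convergence}) and extend it to handle the second time variable. Since the conclusion is merely convergence in probability to zero, locally uniformly in $(t,s)$, I need not identify any limit process; it suffices to show, for arbitrary $T, S, \eta > 0$, that $\P(\sup_{t \le T,\, s \le S}\|X^N(t,s,\cdot)\|_1 > \eta) \to 0$ as $N \to \infty$.

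The central device is a two-dimensional grid reduction. I introduce regular partitions $\{t_i = i\delta\}_{i=0}^{\lceil T/\delta\rceil}$ and $\{s_j = j\delta\}_{j=0}^{\lceil S/\delta\rceil}$ of mesh $\delta > 0$. For any $(t,s)$ in the cell $[t_i, t_{i+1}]\times [s_j, s_{j+1}]$, a double use of the triangle inequality gives
\[
\|X^N(t,s,\cdot)\|_1 \le \|X^N(t_i, s_j, \cdot)\|_1 + \|X^N(t,s,\cdot) - X^N(t_i, s, \cdot)\|_1 + \|X^N(t_i, s, \cdot) - X^N(t_i, s_j, \cdot)\|_1.
\]
Taking the supremum over cells and applying the union bound decomposes $\P(\sup_{t,s}\|X^N(t,s,\cdot)\|_1 > 3\eta)$ into three probabilities. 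The grid-point term is bounded via condition (i) by $\lceil T/\delta\rceil \lceil S/\delta\rceil \sup_{t,s} \P(\|X^N(t,s,\cdot)\|_1 > \eta)$, which vanishes as $N \to \infty$ for each fixed $\delta$. The two oscillation terms contribute factors of $\lceil T/\delta\rceil$ and $\lceil S/\delta\rceil$ respectively, which are precisely absorbed by the $\delta^{-1}\P(\cdot)$ form of the two lines in (ii), yielding a bound of the form $T\cdot o_\delta(1)$ and $S\cdot o_\delta(1)$ after taking $\limsup_N$. Sending $\delta \to 0$ then completes the argument.

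The main obstacle I anticipate is in handling the coupled oscillation $\|X^N(t,s,\cdot) - X^N(t_i, s_j, \cdot)\|_1$. Splitting it into two single-variable increments is immediate, but the second increment is evaluated at an off-grid value $s \in [s_j, s_{j+1}]$, not at $s_j$ itself. Here the precise form of condition (ii) becomes essential: the inner $\sup_{s \in [0,S]}$ and dually $\sup_{t \in [0,T]}$ built into each of the two bounds ensures that the single-variable oscillations are controlled \emph{uniformly} in the other variable, so that the union bound only needs to run over the $t_i$-grid (respectively the $s_j$-grid) and not over the full $n \times m$ product grid. Without this uniformity built into (ii), one would require a much stronger decay in $\delta$ to absorb the product of grid sizes, and the extension from the one-parameter to the two-parameter criterion would fail.
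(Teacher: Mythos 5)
Your proof is correct and is precisely the ``straightforward extension'' of the Billingsley corollary and of \cite[Theorem 4.1]{PP-2021} that the paper alludes to without spelling out; the key observation you isolate---that the inner $\sup_{s\in[0,S]}$ (respectively $\sup_{t\in[0,T]}$) built into each line of condition (ii) lets the union bound run over a single $O(1/\delta)$-point grid rather than the $O(1/\delta^2)$-point product grid, so that the $\delta^{-1}$ normalization in (ii) exactly cancels the grid count---is the essential point, and your handling of the three terms (grid-point term via (i), two single-variable oscillation terms via the two lines of (ii)) matches the intended argument. No gaps.
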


\medskip

We shall also need the following Lemma.
\begin{lemma}\label{convPort}
For each $N\ge1$, let $f_N:\R_+\times[0,1]\mapsto\R_+$ be measurable and such that $t\mapsto f_N(t,x)$ is non--decreasing for each $x\in[0,1]$. 
Assume that there exists $f:\R_+\times[0,1]\mapsto\R_+$ such that 
 $t\mapsto f(t,x)$ is continuous for each $x\in[0,1]$, and for all $t\ge0$,
 as $N\to\infty$,
\begin{equation}\label{conv}
\|f_N(t,\cdot)-f(t,\cdot)\|_{1}\to0\, .
\end{equation}
Let $g\in \bD(\R;\R_+)$ be such that there exists $C>0$ with $g(t)\le C$ for all $t\ge0$. Define
\[ h_N(t,x)=\int_0^tg(s)f_N(ds,x),\quad h(t,x)=\int_0^tg(s)f(ds,x)\,.\]
Then for any $t>0$, $\|h_N(t,\cdot)- h(t,\cdot)\|_{1}\to 0$ as $N\to\infty$.
In addition, $\int_0^1h_N(t,x)dx\to\int_0^1h(t,x)dx$ locally uniformly in $t$, as $N\to\infty$.

Moreover, if for each $N\ge1$, $f_N$ is random and the convergence \eqref{conv} holds in probability, then 
the conclusion holds in probability as well.
\end{lemma}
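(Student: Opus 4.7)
\emph{Strategy.} The plan is to approximate $g$ uniformly on $[0,t]$ by a step function $g_\eta$ tied to a partition $0=t_0<t_1<\cdots<t_n=t$, so that both $\int_{(0,t]}g_\eta(s)f_N(ds,x)$ and $\int_{(0,t]}g_\eta(s)f(ds,x)$ become finite linear combinations of values of $f_N$ and $f$ at the partition points, where the hypothesis applies directly. One then controls the $L^1([0,1])$-error between $h_N(t,\cdot)$ and $h(t,\cdot)$ via a three-term triangle split against $\tilde h_N(t,x):=\int_{(0,t]}g_\eta(s)f_N(ds,x)$ and $\tilde h(t,x):=\int_{(0,t]}g_\eta(s)f(ds,x)$.

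\emph{Pointwise-in-$t$ convergence.} Fix $t>0$ and $\eta>0$. Using the standard fact that a bounded c\`adl\`ag function admits a uniform step-function approximation on compacts, together with the density in $[0,t]$ of the continuity points of $g$, pick the partition with $t_1,\ldots,t_{n-1}$ continuity points of $g$ and an associated step function $g_\eta$ such that $\sup_{s\in[0,t]}|g(s)-g_\eta(s)|\le\eta$. The first and third terms in
\[
\|h_N(t,\cdot)-h(t,\cdot)\|_1 \le \|h_N-\tilde h_N\|_1 + \|\tilde h_N-\tilde h\|_1 + \|\tilde h-h\|_1
\]
are bounded respectively by $\eta\|f_N(t,\cdot)\|_1$ and $\eta\|f(t,\cdot)\|_1$, using $|g-g_\eta|\le\eta$ and $\int_{(0,t]}f_N(ds,x)\le f_N(t,x)$. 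The middle term is a finite linear combination, with coefficients bounded by $C$, of norms of the form $\|f_N(t_i,\cdot)-f(t_i,\cdot)\|_1$ (tending to zero by hypothesis) and $\|f_N(t-,\cdot)-f(t,\cdot)\|_1$ (the appearance of the left limit is due to possible atoms of $f_N(ds,x)$ at $t$). The latter tends to zero by combining
\[
\|f_N(t-,\cdot)-f(t,\cdot)\|_1 \le \bigl(\|f_N(t,\cdot)\|_1-\|f_N(t-,\cdot)\|_1\bigr)+\|f_N(t,\cdot)-f(t,\cdot)\|_1
\]
with the monotone sandwich $\|f_N(s,\cdot)\|_1\le\|f_N(t-,\cdot)\|_1\le\|f_N(t,\cdot)\|_1$ for $s<t$ and the continuity of $s\mapsto\|f(s,\cdot)\|_1$ (a consequence of the pointwise continuity of $f(\cdot,x)$ and dominated convergence with dominant $f(T,x)$), which together force $\|f_N(t-,\cdot)\|_1\to\|f(t,\cdot)\|_1$. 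Letting $N\to\infty$ and then $\eta\to 0$ yields $\|h_N(t,\cdot)-h(t,\cdot)\|_1\to 0$.

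\emph{Locally uniform integrals, random case, and main obstacle.} Set $H_N(t):=\int_0^1 h_N(t,x)dx$ and $H(t):=\int_0^1 h(t,x)dx$; both are non-decreasing in $t$ (since $g\ge 0$), and $H$ is continuous on every $[0,T]$ because the continuity of $f(\cdot,x)$ implies that $f(ds,x)$ has no atoms, so $h(\cdot,x)$ is continuous, and dominated convergence with dominant $Cf(T,x)\in L^1([0,1])$ transfers continuity to $H$. Pointwise convergence $H_N(t)\to H(t)$ follows from $|H_N(t)-H(t)|\le\|h_N(t,\cdot)-h(t,\cdot)\|_1$, and monotone pointwise convergence to a continuous limit on a compact interval is automatically locally uniform (classical argument). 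In the random case, invoke the subsequence principle: from any subsequence extract a further subsequence along which $\|f_{N_{k_j}}(t,\cdot)-f(t,\cdot)\|_1\to 0$ almost surely for every $t$ in a fixed countable dense subset of $\R_+$; the deterministic argument above (with partition points drawn from this dense set) then applies pathwise, and convergence in probability for the full sequence follows since the limit is deterministic. The main technical point I anticipate is precisely the appearance of left limits $f_N(t-,\cdot)$ in the expansion of $\tilde h_N$ coming from atoms of the c\`adl\`ag integrators $f_N(ds,x)$; this is the step that genuinely uses the continuity assumption on the limit $f(\cdot,x)$, and it is resolved by the sandwich-and-triangle argument above.
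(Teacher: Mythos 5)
Your approach is genuinely different from the paper's. The paper extracts a subsequence along which $f_N(s_n,x)\to f(s_n,x)$ for a.e.\ $x$ and a countable dense set $\{s_n\}$, upgrades this to weak convergence of the measures $f_N(ds,x)$ for a.e.\ $x$ (using monotonicity of $f_N(\cdot,x)$ and continuity of $f(\cdot,x)$), invokes a version of the Portmanteau theorem to get $h_N(t,x)\to h(t,x)$ pointwise in $x$ (the discontinuity set of $g$ being $f(ds,x)$-null), and then passes to $L^1([0,1])$ by uniform integrability from $0\le h_N(t,x)\le C f_N(t,x)$. You instead work directly in $L^1([0,1])$: approximate $g$ by a step function so that the hypothesis \eqref{conv} can be applied at finitely many times, with a sandwich argument to control the left limits of $f_N$. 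Both need the second Dini theorem for the locally uniform integral convergence and a subsequence argument for the random case.

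There is, however, a genuine gap in your step-function construction. You claim that the interior partition points $t_1,\dots,t_{n-1}$ can be chosen to be continuity points of $g$ while still achieving $\sup_{[0,t]}|g-g_\eta|\le\eta$ for a step function constant on the partition cells. This fails whenever $g$ has a jump of size greater than $2\eta$ at some $s_0\in(0,t)$: any cell $[t_{i-1},t_i)$ straddling $s_0$ contains points on both sides of the jump, so the oscillation of $g$ on that cell exceeds $2\eta$ and no constant can uniformly approximate $g$ there; the partition is forced to include $s_0$, a discontinuity of $g$. Relatedly, once you expand $\int_{(0,t]}g_\eta(s)\,f_N(ds,x)$ for a right-continuous step function constant on $[t_{i-1},t_i)$, the result involves the left limits $f_N(t_i^-,x)$ at \emph{every} interior partition point (with coefficients comparable to the jumps of $g_\eta$), not only at the right endpoint $t$; the assertion that the left limit appears only at $t$ is incorrect. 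The good news is that the repair uses machinery you already built: drop the restriction to continuity points of $g$, take any partition achieving the uniform step approximation, and apply your sandwich argument verbatim at each $t_i$ to get $\|f_N(t_i^-,\cdot)-f(t_i,\cdot)\|_1\to 0$ (which suffices, since $f(t_i^-,\cdot)=f(t_i,\cdot)$ by the assumed continuity of $f(\cdot,x)$). With that correction, the argument goes through, including in the random case provided the countable dense set on which you extract a.s.\ convergence is augmented to include the (countably many) jump times of $g$ in $[0,t]$ as well as $t$ itself.
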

\begin{proof}
Fix $T>0$.
Let $\{s_n,\ n\ge1\}$ be a countable dense subset of $[0,T]$. By successive extraction of subsequences we can extract a subsequence from the original sequence $\{f_N,\ N\ge1\}$, which by an abuse of notation we still denote as the original sequence, and which is such that there exists a subset $\mathcal{N}\subset[0,1]$ with zero Lebesgue measure, such that
for all $n\ge1$ and $x\in[0,1]\backslash\mathcal{N}$, $f_N(s_n,x)\to f(s_n,x)$. Since for all $N$ and $x$,  $s\mapsto f_N(s,x)$ is nondecreasing and  $s\mapsto f(s,x)$ is continuous, we deduce that for all $s\in[0,T]$ and $x\in[0,1]\backslash\mathcal{N}$, $f_N(s,x)\to f(s,x)$. Consequently, for all $x\in[0,1]\backslash\mathcal{N}$, the sequence
of measures $f_N(ds,x)$ on $[0,T]$ converges weakly to the measure $f(ds,x)$. Since the set of points of discontinuity of $g$ on $[0,T]$ is at most countable and $s\mapsto f(s,x)$ is continuous, that set is of zero $f(ds,x)$ measure. Hence a slight extension of the Portmanteau theorem (see Theorem 1.2.1 in \cite{billingsley1999convergence}) yields that for all
$x\in[0,1]\backslash\mathcal{N}$, $h_N(t,x)\to h(t,x)$. Moreover, $0\le h_N(t,x)\le C f_N(t,x)$, and the upper bound converges in $L^1([0,1])$, hence the sequence $h_N(t,\cdot)$ is uniformly integrable and converges in 
$L^1([0,1])$ towards $h(t,x)$. Now all converging subsequences have the same limit, so the the whole sequence converges.

The ``locally uniform in $t$" convergence of the integrals follows from the second Dini theorem (see, e.g., Problem 127 on pages 81 and 270 in \cite{Polya-Szego}). Indeed the convergence
$\int_0^1h_N(t,x)dx\to\int_0^1h(t,x)dx$ for each $t$ follows from the above arguments, for each $N\ge1$, 
$t\mapsto\int_0^1h_N(t,x)dx$ is non--decreasing and the limit $t\mapsto\int_0^1h(t,x)dx$ is continuous.

The case of random $f_N$ is treated similarly. The extraction of subsequences is done in such a way that for each $n$, $f_N(s_n,x)$ converges as $N\to\infty$ on a subset of $\Omega\times[0,1]$ of full $d\,\P \otimes dx$ measure. We conclude that from any subsequence of the original sequence $\{h_N(t,\cdot),\ N\ge1\}$, we can extract a further subsequence which converges a.s. in $L^1([0,1])$, hence the convergence in probability in $L^1([0,1])$, as claimed.
\end{proof}

\bigskip 

\section{Proof of the Convergence of $\bar{S}^N(t,x)$ and $\bar{\mathfrak{F}}^N(t,x)$} \label{sec-proof-conv-S-F}

In this section we prove the convergence of $\bar{S}^N(t,x)$ and $\bar{\mathfrak{F}}^N(t,x)$ to 
 $\bar{S}(t,x)$ and $\bar{\mathfrak{F}}(t,x)$ given by the set of equations \eqref{eqn-barS-tx} and \eqref{eqn-barmfF-tx} together with \eqref{eqn-barUpsilon-tx}.  
We first write $S^N_k(t) = S^N_k(0)-A^N_k(t)$ as follows by \eqref{eqn-An-k-rep}:
\begin{align*}
S^N_k(t) = S^N_k(0) -  \int_0^t \int_0^\infty {\bf 1}_{u \le \Upsilon^N_k(s) } Q_{k}(ds, d u),
\end{align*}
and recall $\mathfrak{F}_k^N(t)$ in \eqref{eqn-mfk}. Then, we have
\begin{align} \label{eqn-barSn-tx}
\bar{S}^N(t,x) &= \bar{S}^N(0,x) - \sum_{k=1}^{K^N} \frac{K^N}{N} \int_0^t \int_0^\infty {\bf 1}_{u \le \Upsilon^N_k(s) } Q_{k}(ds, d u)\, {\bf 1}_{\mathtt{I}_k}(x) \non \\
&=  \bar{S}^N(0,x) -   \int_0^t  \bar\Upsilon^N(s,x)  ds \,  - \bar{M}^N_A(t,x)\,,
\end{align}
where $\overline{Q}_k(ds,du)= Q_k(ds,du)-dsdu$ and 
\begin{align} \label{eqn-barM-An-tx}
 \bar{M}^N_A(t,x):= \sum_{k=1}^{K^N}  \frac{K^N}{N}\int_0^t \int_0^\infty {\bf 1}_{u \le \Upsilon^N_k(s) } \overline{Q}_{k}(ds, d u)\, {\bf 1}_{\mathtt{I}_k}(x)\,. 
\end{align} 
We then write 
\begin{align}  \label{eqn-bar-mfFn-tx}
\bar{\mathfrak{F}}^N(t,x) 
&= \bar{\mathfrak{F}}^N_0(t,x) +  \int_0^t  \bar{\lambda} (t-s) \bar{\Upsilon}^N(s,x) ds    +  \Delta^{N}_{1,1}(t,x)+ \Delta^{N}_{1,2}(t,x)\,,
\end{align}
where 
 \begin{equation} \label{eqn-bar-mfFn-0}
\bar{\mathfrak{F}}^N_0(t,x) = \sum_{k=1}^{K^N} \frac{K^N}{N} \sum_{j: -j \in \cT^N_k(0)}  \lambda_{-j,k}(\tilde{\tau}^N_{-j,k}+ t){\bf 1}_{\mathtt{I}_k}(x),
\end{equation}
\begin{equation} \label{eqn-Delta-11}
 \Delta^{N}_{1,1}(t,x)= \sum_{k=1}^{K^N} \frac{K^N}{N} \sum_{j=1}^{A^N_k(t)} \Big( \lambda_{j,k} (t-\tau^N_{j,k}) - \bar\lambda(t-\tau^N_{j,k})\Big){\bf 1}_{\mathtt{I}_k}(x) \,,
\end{equation} 
and
\begin{align} \label{eqn-Delta-12}
\Delta^{N}_{1,2}(t,x) = \sum_{k=1}^{K^N} \frac{K^N}{N} \int_0^t  \bar{\lambda} (t-s) \int_0^\infty {\bf 1}_{u \le \Upsilon^N_k(s) } \overline{Q}_{k}(ds, d u)  \,{\bf 1}_{\mathtt{I}_k}(x)\,. 
\end{align}
Observe that 
\begin{align} \label{eqn-barUpsilon-n-sx}
\bar{\Upsilon}^N(s,x)  & =  \sum_{k=1}^{K^N} \frac{K^N}{N} \frac{S^N_k(s)}{B^N_k} \frac{1}{K^N} \sum_{k'=1}^{K^N}\beta^N_{k,k'} \mathfrak{F}_{k'}^N(s){\bf 1}_{\mathtt{I}_k}(x)  \non \\
& =     \sum_{k=1}^{K^N}  \frac{\bar{S}^N_k(s)}{\bar{B}^N_k}{\bf 1}_{\mathtt{I}_k}(x)  \int_0^1\sum_{k'=1}^{K^N} \beta^N_{k,k'}  \bar{\mathfrak{F}}_{k'}^N(s)  {\bf 1}_{\mathtt{I}_{k'}}(x')dx' \non \\
&= \frac{ \bar{S}^N(s,x)}{\bar{B}^N(x)}\int_0^1\beta^N(x,x')\bar{\mathfrak{F}}^N(s,x') dx'    \, ,
\end{align} 
where $\beta^N(x,x')$ is defined in \eqref{eqn-betaN-xx'}. 


Before proceeding to prove the convergence of $\bar{S}^N(t,x)$ and $\bar{\mathfrak{F}}^N(t,x)$, we describe the proof strategy as follows.
In the expressions of $\bar{S}^N(t,x)$ and $\bar{\mathfrak{F}}^N(t,x)$ in \eqref{eqn-barSn-tx} and \eqref{eqn-bar-mfFn-tx}, the stochastic terms $ \bar{M}^N_A(t,x)$,  $\Delta^{N}_{1,1}(t,x)$ and $ \Delta^{N}_{1,2}(t,x)$ will converge to zero in probability as $N\to \infty$, which are proved in Lemmas \ref{lem-barAn-tight} and \ref{lem-mfN1-conv}. The term $\bar{\mathfrak{F}}^N_0(t,\cdot)$ will converge to a limit  $\bar{\mathfrak{F}}_0(t,\cdot)$  (in the $\|\cdot\|_1$ norm in probability), which is proved in Lemma \ref{lem-mfN0-conv}. 
Thus, the proof for the convergence of $\bar{S}(t,x)$ and $\bar{\mathfrak{F}}^N(t,x)$ can be carried out by studying the set of integral equations 
 \eqref{eqn-barSn-tx} and \eqref{eqn-bar-mfFn-tx} together with the expression of $\bar\Upsilon^N(s,x)$ above, given the convergence of the terms $\bar{S}^N(0,\cdot)$, $\bar{\mathfrak{F}}^N_0(t,\cdot)$,
 $ \bar{M}^N_A(t,x)$,  $\Delta^{N}_{1,1}(t,x)$ and $ \Delta^{N}_{1,2}(t,x)$. 
 In the following we will first provide this argument in Proposition \ref{prop-conv-S-mfF} and then provide the proofs for the convergence of the required individual terms. 
 
The following Lemma follows readily from
 \eqref{eqn-N-B-2} and \eqref{eqn-mfk}, and the conditions on $\bar{B}^N(x)$ in \eqref{eqn-CB}.
\begin{lemma}\label{aprioriN}
The processes $\bar{S}^N(t,x)$ and $\bar{\mathfrak{F}}^N(t,x)$ are nonnegative and satisfy the following a priori bounds: 
\[ 
\sup_N \sup_{ t\ge0,\ x\in[0,1]}\bar{S}^N(t,x)\le C_B\qandq  \sup_N \sup_{t\ge0, \ x\in[0,1]}\bar{\mathfrak{F}}^N(t,x)\le\lambda^\ast C_B \quad \text{a.s.}  \]
\end{lemma}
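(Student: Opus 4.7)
The plan is to deduce both the nonnegativity and the two uniform bounds directly from the individual-level description, then push the scaling through using the uniform bound on $\bar{B}^N$ in \eqref{eqn-CB}. There is no real obstacle here; the statement is essentially a bookkeeping lemma that records a priori bounds needed later, and the proof should take only a few lines.

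First I would handle nonnegativity. For $\bar{S}^N(t,x)$, the quantity $S^N_k(t)$ is a count of individuals, hence nonnegative, so the piecewise-constant-in-$x$ function $\bar{S}^N(t,x) = \sum_{k} \bar{S}^N_k(t)\mathbf{1}_{\mathtt{I}_k}(x)$ is nonnegative. For $\bar{\mathfrak{F}}^N(t,x)$, the definition \eqref{eqn-mfk} expresses $\mathfrak{F}^N_k(t)$ as a finite sum of evaluations of the infectivity processes $\lambda_{j,k}(\cdot)$, which by Assumption \ref{AS-lambda} are $[0,\lambda^\ast]$-valued almost surely; hence the sum is nonnegative a.s.

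Next I would establish the upper bound on $\bar{S}^N$. Since $S^N_k(t)$ is nonincreasing in $t$ (susceptibles can only be removed by infection, cf.\ the dynamics displayed just before \eqref{sec-FLLN}), we have $S^N_k(t)\le S^N_k(0)\le B^N_k$. Dividing by $N/K^N$ gives $\bar{S}^N_k(t)\le \bar{B}^N_k$, and therefore $\bar{S}^N(t,x)\le \bar{B}^N(x)\le C_B$ pointwise in $(t,x)$ by \eqref{eqn-CB}. Since $C_B$ is deterministic and independent of $N$, taking suprema over $N$, $t$, and $x$ yields the first bound.

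For the bound on $\bar{\mathfrak{F}}^N$, I would read \eqref{eqn-mfk} and note that each summand is $\le \lambda^\ast$ by Assumption \ref{AS-lambda}. The first sum has $I^N_k(0)$ terms and the second has $S^N_k(0)$ nominal terms (with those corresponding to $\tau^N_{j,k}>t$ contributing zero since $\lambda(\cdot)$ vanishes on $\R_-$), so altogether
\[
\mathfrak{F}^N_k(t)\le \lambda^\ast\bigl(I^N_k(0)+S^N_k(0)\bigr)\le \lambda^\ast B^N_k,
\]
using $S^N_k(0)+I^N_k(0)+R^N_k(0)=B^N_k$ from \eqref{eqn-N-B-1}. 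Scaling by $(N/K^N)^{-1}$ and applying \eqref{eqn-CB} gives $\bar{\mathfrak{F}}^N(t,x)\le \lambda^\ast \bar{B}^N(x)\le \lambda^\ast C_B$, uniformly in $N$, $t\ge 0$, and $x\in[0,1]$, which is the second bound. This completes the proof.
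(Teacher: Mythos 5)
Your proof is correct and follows exactly the route the paper indicates (the paper simply asserts that the lemma ``follows readily from'' \eqref{eqn-N-B-2}, \eqref{eqn-mfk}, and \eqref{eqn-CB}, without writing out the argument); you have filled in precisely those details. The only cosmetic slip is the reference ``\eqref{sec-FLLN}'' where a non-equation label is used; what you mean is the display of $S^N_k$, $I^N_k$, $R^N_k$ at the end of Section~\ref{sec-model}.
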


Next, recall the set of the limiting equations: 
\begin{equation}\label{eq:SF}
\begin{aligned}
\bar{S}(t,x)&=\bar{S}(0,x)-\int_0^t\frac{\bar{S}(s,x)}{\bar{B}(x)}\int_0^1\beta(x,y)\bar{\mathfrak{F}}(s,y)dyds,\\
\bar{\mathfrak{F}}(t,x)&=\bar{\mathfrak{F}}_0(t,x)+\int_0^t\bar{\lambda}(t-s) \frac{\bar{S}(s,x)}{\bar{B}(x)} \int_0^1\beta(x,y)\bar{\mathfrak{F}}(s,y)dyds\,,
\end{aligned}
\end{equation}
where
\begin{equation} \label{eqn-bar-mfF-0}
 \bar{\mathfrak{F}}_0(t,x) :=
  \int_0^\infty  \frac{ \bar{\lambda}(\mfa+t)}{F^c(\mfa)}\bar{\sI}(0,d \mfa, x).
\end{equation}

We have the following lemmas on the solution properties to this set of equations, and also the existence and uniqueness of its solution. 
\begin{lemma}\label{apriori}
Under Assumptions \ref{AS-LLN-1} and \ref{AS-lambda}, any  $(L^\infty([0,1]))^2$--valued  solution 
$(\bar{S}(t,x),\bar{\mathfrak{F}}(t,x))$ of equation \eqref{eq:SF} is nonnegative, and 
satisfies $\sup_{t\ge0}\bar{S}(t,x)\le\bar{S}(0,x)\le C_B$ and for any $T>0$, there exists $C_T>0$ such that
\[ \sup_{0\le t\le T, x\in[0,1]}\bar{\mathfrak{F}}(t,x)\le C_T\,.\]
\end{lemma}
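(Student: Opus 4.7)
The plan is to exploit the feedback structure of \eqref{eq:SF}: the first equation is linear in $\bar{S}$ with a coefficient depending only on $\bar{\mathfrak{F}}$, while the second is linear in $\bar{\mathfrak{F}}$ with a coefficient depending only on $\bar{S}$. This allows a two-step bootstrap for non-negativity, after which the quantitative upper bounds follow by monotonicity and Gronwall's lemma.

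First, fix $x\in[0,1]$ and rewrite the first equation in \eqref{eq:SF} as the linear Volterra equation
\[
\bar{S}(t,x)=\bar{S}(0,x)-\int_0^t h(s,x)\bar{S}(s,x)\,ds,\qquad h(s,x):=\frac{1}{\bar{B}(x)}\int_0^1\beta(x,y)\bar{\mathfrak{F}}(s,y)\,dy.
\]
Since the solution is $L^\infty([0,1])$--valued, $|h(s,x)|\le C_\beta\|\bar{\mathfrak{F}}(s,\cdot)\|_\infty/c_B$ is locally bounded in $s$ by Assumption~\ref{AS-LLN-2} and \eqref{eqn-barB-condition}. Standard linear ODE theory then yields the explicit representation
\[
\bar{S}(t,x)=\bar{S}(0,x)\exp\Bigl(-\int_0^t h(s,x)\,ds\Bigr),
\]
and since $\bar{S}(0,x)\ge 0$, non-negativity of $\bar{S}$ follows immediately, regardless of the sign of $h$.

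Second, with $\bar{S}\ge 0$ in hand, the second equation in \eqref{eq:SF} takes the form $\bar{\mathfrak{F}}=\bar{\mathfrak{F}}_0+L\bar{\mathfrak{F}}$ for the positive Volterra operator
\[
(L\psi)(t,x):=\int_0^t\bar\lambda(t-s)\frac{\bar{S}(s,x)}{\bar{B}(x)}\int_0^1\beta(x,y)\psi(s,y)\,dy\,ds,
\]
with non-negative source $\bar{\mathfrak{F}}_0$. The iterated bound $\|L^n\bar{\mathfrak{F}}_0(t,\cdot)\|_\infty\le (Mt)^n/n!\cdot\sup_{s\le t}\|\bar{\mathfrak{F}}_0(s,\cdot)\|_\infty$, where $M:=\lambda^\ast C_\beta\sup_{s\le t,\,y}\bar{S}(s,y)/c_B$ is finite by the previous step, shows that the Neumann series $\sum_{n\ge 0}L^n\bar{\mathfrak{F}}_0$ converges locally uniformly in $L^\infty([0,1])$ to a non-negative solution. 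Uniqueness of $L^\infty$--valued solutions of $\bar{\mathfrak{F}}=\bar{\mathfrak{F}}_0+L\bar{\mathfrak{F}}$ follows by applying Gronwall to the difference of two such solutions, so the given $\bar{\mathfrak{F}}$ coincides with the Neumann series and is non-negative.

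Finally, $\bar{\mathfrak{F}}\ge 0$ makes $h\ge 0$, so $t\mapsto\bar{S}(t,x)$ is non-increasing, giving $\bar{S}(t,x)\le\bar{S}(0,x)\le C_B$ by \eqref{eqn-CB} (using that $\bar{S}^N(0,x)\le\bar{B}^N(x)$ passes to the limit a.e.). Plugging this back into the $\bar{\mathfrak{F}}$ equation together with $\bar{\mathfrak{F}}_0(t,x)\le\lambda^\ast\bar{I}(0,x)\le\lambda^\ast C_B$ and $\int_0^1\beta(x,y)\,dy\le C_\beta$ gives
\[
\|\bar{\mathfrak{F}}(t,\cdot)\|_\infty\le\lambda^\ast C_B+\frac{\lambda^\ast C_\beta C_B}{c_B}\int_0^t\|\bar{\mathfrak{F}}(s,\cdot)\|_\infty\,ds,
\]
and Gronwall's lemma yields $\sup_{t\le T,\,x\in[0,1]}\bar{\mathfrak{F}}(t,x)\le C_T:=\lambda^\ast C_B\exp(\lambda^\ast C_\beta C_B T/c_B)$. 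The main obstacle is the circular dependence between $\bar{S}$ and $\bar{\mathfrak{F}}$ in the non-negativity argument; it is resolved by noting that $\bar{S}\ge 0$ comes for free from the exponential formula, which decouples the remaining Volterra analysis for $\bar{\mathfrak{F}}$.
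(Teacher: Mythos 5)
Your proof is correct and follows the same overall strategy as the paper: $\bar{S}\ge 0$ from the exponential representation of the linear Volterra equation, then $\bar{\mathfrak{F}}\ge 0$, then $\bar{S}(t,x)\le\bar{S}(0,x)\le C_B$ by monotonicity, and finally the bound on $\bar{\mathfrak{F}}$ by Gronwall. The step where you genuinely diverge is the non-negativity of $\bar{\mathfrak{F}}$. The paper decomposes $\bar{\mathfrak{F}}=\bar{\mathfrak{F}}_+-\bar{\mathfrak{F}}_-$, shows that $\bar{\mathfrak{F}}_-$ satisfies a linear Volterra inequality with nonnegative kernel, and concludes $\bar{\mathfrak{F}}_-\equiv 0$ by Gronwall; you instead express the $\bar{\mathfrak{F}}$-equation as a fixed point of the positive Volterra operator $L$, observe that the Neumann series $\sum_{n\ge 0}L^n\bar{\mathfrak{F}}_0$ is nonnegative and convergent, and then identify it with the given solution via a Gronwall uniqueness argument. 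Both routes rest on the same ingredients (positivity of the kernel once $\bar{S}\ge 0$, plus a Volterra iterate / Gronwall estimate), but the Neumann-series phrasing makes the positivity structural rather than obtained by contradiction. Your ordering is also slightly tidier than the paper's: by establishing $\bar{\mathfrak{F}}\ge 0$ before running the quantitative Gronwall bound on $\|\bar{\mathfrak{F}}(t,\cdot)\|_\infty$, the ingredient $\bar{S}(s,x)\le C_B$ is available exactly where it enters the Gronwall constant, whereas the paper inserts $C_B$ into the Gronwall inequality before formally deriving the upper bound on $\bar{S}$ (this is harmless because the assumed $L^\infty$-valuedness already gives a crude local bound on $\bar{S}$, but your arrangement avoids the need to say so). One minor point is that $\bar{S}(0,x)\le C_B$ is an a.e.\ inequality, obtained by passing to the limit in $\bar{S}^N(0,x)\le\bar{B}^N(x)\le C_B$ along an a.e.\ convergent subsequence, which your parenthetical remark correctly reflects.
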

\begin{proof}
The non--negativity of $\bar{S}$ follows from that of the initial condition and the linearity of the equation.
 For the second statement, we first note that
$\bar{\sT}(0,\infty, x)\le C_B$, hence from \eqref{eqn-bar-mfF-0} and Assumption \ref{AS-lambda},
$0\le \bar{\mathfrak{F}}_0(t,x)\le\lambda^\ast C_B$. Hence from the second line of \eqref{eq:SF} and \eqref{eqn-C-beta} and from the assumption that $\bar{B}(x) \ge c_B>0$ for each $x \in [0,1]$ in \eqref{eqn-barB-condition},
 we obtain 
\[ \|\bar{\mathfrak{F}}(t,\cdot)\|_\infty\le\lambda^\ast C_B+\frac{C_\beta}{c_B}\lambda^\ast C_B\int_0^t \|\bar{\mathfrak{F}}(s,\cdot)\|_\infty ds.\]
Thus, the second statement with $C_T=\lambda^\ast C_B \exp\big(\frac{C_\beta}{c_B}\lambda^\ast C_BT \big)$ follows from Gronwall's lemma.
We next show that $\bar{\mathfrak{F}}(t,x)\ge0$. Suppose that $\bar{\mathfrak{F}}(t,x)=\bar{\mathfrak{F}}_+(t,x)-\bar{\mathfrak{F}}_-(t,x)$. Then we have
\[ \bar{\mathfrak{F}}_-(t,x)\le \int_0^t \bar{\lambda}(t-s) \frac{\bar{S}(s,x)}{\bar{B}(x)}\int_0^1\beta(x,y)\bar{\mathfrak{F}}_-(s,y)dyds,\]
and by a similar argument as above using Gronwall's Lemma, we deduce that $\|\bar{\mathfrak{F}}_-(t,\cdot)\|_\infty=0$, hence the result.
Finally it follows readily from Assumption \ref{AS-LLN-1} that $\bar{S}(0,x)\le \sup_N\bar{S}^N(0,x)\le C_B$ for all $x$. From the first line of \eqref{eq:SF}, 
since $\bar{S}$ and $\bar{\mathfrak{F}}$ are nonnegative, $\bar{S}(t,x)\le \bar{S}(0,x)$, hence the first statement.
\end{proof}

\begin{lemma}\label{existuniq}
Under Assumptions \ref{AS-LLN-1} and \ref{AS-lambda}, equation \eqref{eq:SF} has a unique   $(L^\infty([0,1]))^2$--valued  solution.
\end{lemma}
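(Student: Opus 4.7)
The plan is to establish uniqueness by a standard Gronwall argument on the difference of two solutions, and existence by a Picard iteration whose convergence is controlled by the same computation, with the a priori bounds from Lemma \ref{apriori} doing the essential work.

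For \textbf{uniqueness}, I would take two $(L^\infty([0,1]))^2$-valued solutions $(\bar S^i,\bar{\mathfrak F}^i)$, $i=1,2$, and set $u:=\bar S^1-\bar S^2$, $v:=\bar{\mathfrak F}^1-\bar{\mathfrak F}^2$. Subtracting the two copies of \eqref{eq:SF} and splitting each nonlinear term via $ab-cd=(a-c)b+c(b-d)$ yields
\begin{align*}
u(t,x) &= -\int_0^t \frac{1}{\bar B(x)}\Bigl[u(s,x)\!\int_0^1\!\beta(x,y)\bar{\mathfrak F}^1(s,y)\,dy + \bar S^2(s,x)\!\int_0^1\!\beta(x,y)v(s,y)\,dy\Bigr] ds,
\end{align*}
and an analogous expression for $v(t,x)$ carrying an extra factor $\bar\lambda(t-s)\le\lambda^\ast$ inside the integrand. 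Using $\bar B(x)\ge c_B$ from \eqref{eqn-barB-condition}, $\|\bar{\mathfrak F}^1(s,\cdot)\|_\infty\le C_T$ and $\|\bar S^2(s,\cdot)\|_\infty\le C_B$ from Lemma \ref{apriori}, and $\int_0^1\beta(x,y)dy\le C_\beta$ from Assumption \ref{AS-LLN-2}, and then taking the supremum in $x$ on both sides gives
\[
\|u(t,\cdot)\|_\infty+\|v(t,\cdot)\|_\infty \;\le\; K_T\int_0^t\bigl(\|u(s,\cdot)\|_\infty+\|v(s,\cdot)\|_\infty\bigr) ds,
\]
for some constant $K_T$ depending only on $c_B$, $C_B$, $C_T$, $C_\beta$, $\lambda^\ast$. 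Gronwall's lemma then forces $u\equiv v\equiv 0$ on $[0,T]$ for every $T>0$.

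For \textbf{existence}, I would set $\bar S^{(0)}(t,x):=\bar S(0,x)$, $\bar{\mathfrak F}^{(0)}(t,x):=\bar{\mathfrak F}_0(t,x)$ and inductively define $(\bar S^{(n+1)},\bar{\mathfrak F}^{(n+1)})$ by plugging $(\bar S^{(n)},\bar{\mathfrak F}^{(n)})$ into the right-hand side of \eqref{eq:SF}. The a priori arguments of Lemma \ref{apriori} propagate through the iteration, so that $\sup_n\|\bar S^{(n)}(t,\cdot)\|_\infty\le C_B$ and $\sup_n\|\bar{\mathfrak F}^{(n)}(t,\cdot)\|_\infty\le C_T$ on $[0,T]$. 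Applying the same computation as in the uniqueness step to consecutive differences $u^{(n)}:=\bar S^{(n+1)}-\bar S^{(n)}$, $v^{(n)}:=\bar{\mathfrak F}^{(n+1)}-\bar{\mathfrak F}^{(n)}$ produces
\[
\|u^{(n)}(t,\cdot)\|_\infty+\|v^{(n)}(t,\cdot)\|_\infty \;\le\; K_T\int_0^t\bigl(\|u^{(n-1)}(s,\cdot)\|_\infty+\|v^{(n-1)}(s,\cdot)\|_\infty\bigr) ds,
\]
which iterates to the summable bound $C(K_Tt)^n/n!$. Hence $(\bar S^{(n)},\bar{\mathfrak F}^{(n)})$ is Cauchy in $C([0,T];L^\infty([0,1])^2)$, and its limit is a solution of \eqref{eq:SF}.

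The only delicate point is that the argument relies crucially on the uniform positivity $\bar B(x)\ge c_B>0$; without it the factor $\bar B(x)^{-1}$ in both equations of \eqref{eq:SF} would be unbounded in $x$ and neither the Gronwall step nor the Picard contraction would close. Everything else is routine, and indeed existence also follows \emph{a posteriori} from Theorem \ref{thm-FLLN}, so the new content of the lemma is the uniqueness assertion.
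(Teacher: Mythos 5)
Your proposal is correct and follows essentially the same route as the paper: the a priori bounds of Lemma \ref{apriori}, an $L^\infty$ Lipschitz estimate on the difference (which the paper organizes through $\bar\Upsilon-\bar\Upsilon'$ but is the same $ab-cd=(a-c)b+c(b-d)$ splitting you use), Gronwall for uniqueness, and the same estimate for Picard iteration to get existence. Your closing observation about the role of $\bar B(x)\ge c_B$ matches the paper's use of \eqref{eqn-barB-condition}.
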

\begin{proof}
We already know that any solution is nonnegative and locally bounded. Uniqueness is then easy to deduce from the following estimate.
Consider two solutions $(\bar{S},\bar{\mathfrak{F}})$ and $(\bar{S}',\bar{\mathfrak{F}}')$, and define
$\bar{\Upsilon}(t,x)=\frac{\bar{S}(t,x)}{\bar{B}(x)}\int_0^1\beta(x,y)\bar{\mathfrak{F}}(t,y)dy$, $\bar{\Upsilon}'(t,x)$ similarly, replacing 
$(\bar{S},\bar{\mathfrak{F}})$ by $(\bar{S}',\bar{\mathfrak{F}}')$. 

Since from \eqref{eqn-barB-condition} $\bar{B}(x)\ge c_B$, and from Lemma \ref{apriori} $ \bar{S}(t,x)\le C_B$ and  for $0\le t\le T,
x\in[0,1]$, $\bar{\mathfrak{F}}(t,x)\le C_T$, we obtain
\begin{align*}
\|\bar{\Upsilon}(t,\cdot)-\bar{\Upsilon}'(t,\cdot)\|_\infty&\le\sup_{x \in [0,1]}\bigg|\frac{\bar{S}(t,x)}{\bar{B}(x)}- \frac{\bar{S}'(t,x)}{\bar{B}(x)}\bigg|\int_0^1\beta(x,y)\bar{\mathfrak{F}}(t,y)dy\\
&\quad+\sup_{x \in [0,1]}  \frac{\bar{S}'(t,x)}{\bar{B}(x)}\int_0^1\beta(x,y)|\bar{\mathfrak{F}}(t,y)-\bar{\mathfrak{F}}'(t,y)|dy\\
& \le \frac{1}{c_B} \|\bar{S}(t,\cdot)-\bar{S}'(t,\cdot)\|_\infty\sup_{x \in [0,1]} \int_0^1\beta(x,y)\bar{\mathfrak{F}}(t,y)dy\\
& \quad + \frac{C_\beta C_B}{c_B} \|\bar{\mathfrak{F}}(t,\cdot)-\bar{\mathfrak{F}}'(t,\cdot)\|_\infty \\
& \le \frac{C_\beta}{c_B}C_T \|\bar{S}(t,\cdot)-\bar{S}'(t,\cdot)\|_\infty   + \frac{C_\beta C_B}{c_B} \|\bar{\mathfrak{F}}(t,\cdot)-\bar{\mathfrak{F}}'(t,\cdot)\|_\infty.
\end{align*}

From this inequality, we see that uniqueness follows from Gronwall's Lemma. The same estimate can be used repeatedly for proving convergence in $L^\infty([0,1])$  of the Picard iteration procedure, which establishes existence.
\end{proof}

We can now prove the main result of this section. Let us first introduce a notation. We let $\mathcal{E}^N_{\mathfrak{F}}(t,x) =\Delta^{N}_{1,1}(t,x)+ \Delta^{N}_{1,2}(t,x)$ and 
\begin{align*}
\Psi^N(t):=\int_0^1|\bar{\mathfrak{F}}^N_0(t,x)-\bar{\mathfrak{F}}_0(t,x)|dx 
+ \int_0^1|\bar{M}^N_A(t,x)|dx
+\int_0^1|\mathcal{E}^N_{\mathfrak{F}}(t,x)|dx\,.
\end{align*}
\begin{prop} \label{prop-conv-S-mfF}
Let $T>0$ be arbitrary. Given that $\int_0^1|\bar{S}^N(0,x)-\bar{S}(0,x)|dx\to0$ in Assumption \ref{AS-LLN-1}, and assuming that 
$\sup_{0\le t\le T}\Psi^N(t)\to0$ in probability as $N\to\infty$, we have
\[ \sup_{0\le t\le T}\left(\|\bar{S}^N(t,\cdot)-\bar{S}(t,\cdot)\|_1+\|\bar{\mathfrak{F}}^N(t,\cdot)-\bar{\mathfrak{F}}(t,\cdot)\|_1\right)\to0\]
in probability  as $N\to\infty$. 
\end{prop}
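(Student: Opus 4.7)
The plan is to reduce everything to a deterministic Gronwall estimate for
\[
\phi^N(t) := \|\bar{S}^N(t,\cdot)-\bar{S}(t,\cdot)\|_1 + \|\bar{\mathfrak{F}}^N(t,\cdot)-\bar{\mathfrak{F}}(t,\cdot)\|_1.
\]
Subtracting the limiting system \eqref{eq:SF} from the prelimit identities \eqref{eqn-barSn-tx} and \eqref{eqn-bar-mfFn-tx} (together with \eqref{eqn-barUpsilon-n-sx}), the initial-data error, $\bar{M}^N_A$, $\mathcal{E}^N_{\mathfrak{F}}$ and $\bar{\mathfrak{F}}^N_0-\bar{\mathfrak{F}}_0$ all contribute at most $\Psi^N(t)$ after taking the $L^1$ norm in $x$; using the bound $0\le\bar\lambda\le\lambda^\ast$ in the $\bar{\mathfrak{F}}$ equation, the only nontrivial term that remains is the drift
\(\int_0^t[\bar\Upsilon^N(s,x)-\bar\Upsilon(s,x)]\,ds\).

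The key step is a telescoping decomposition $\bar\Upsilon^N-\bar\Upsilon = T_1^N+T_2^N+T_3^N+T_4^N$, where
\begin{align*}
T_1^N(s,x) &= \tfrac{\bar{S}^N(s,x)-\bar{S}(s,x)}{\bar{B}^N(x)}\textstyle\int_0^1\beta^N(x,y)\bar{\mathfrak{F}}^N(s,y)\,dy,\\
T_2^N(s,x) &= \bar{S}(s,x)\bigl(\tfrac{1}{\bar{B}^N(x)}-\tfrac{1}{\bar{B}(x)}\bigr)\textstyle\int_0^1\beta^N(x,y)\bar{\mathfrak{F}}^N(s,y)\,dy,\\
T_3^N(s,x) &= \tfrac{\bar{S}(s,x)}{\bar{B}(x)}\textstyle\int_0^1[\beta^N-\beta](x,y)\bar{\mathfrak{F}}^N(s,y)\,dy,\\
T_4^N(s,x) &= \tfrac{\bar{S}(s,x)}{\bar{B}(x)}\textstyle\int_0^1\beta(x,y)[\bar{\mathfrak{F}}^N-\bar{\mathfrak{F}}](s,y)\,dy.
\end{align*}
The a priori bounds of Lemmas \ref{aprioriN} and \ref{apriori}, together with $\bar{B}^N,\bar{B}\in[c_B,C_B]$, yield $\|T_1^N(s,\cdot)\|_1\le (C_\beta\lambda^\ast C_B/c_B)\,\|\bar{S}^N(s,\cdot)-\bar{S}(s,\cdot)\|_1$; $\|T_2^N(s,\cdot)\|_1\le (C_B^2\lambda^\ast C_\beta/c_B^2)\,\|\bar{B}^N-\bar{B}\|_\infty$, which is $o(1)$ uniformly in $s$ by \eqref{cvinfty}; and (by Fubini with the column bound $\int_0^1\beta(x,y)\,dx\le C_\beta$) $\|T_4^N(s,\cdot)\|_1\le (C_\beta C_B/c_B)\,\|\bar{\mathfrak{F}}^N(s,\cdot)-\bar{\mathfrak{F}}(s,\cdot)\|_1$.

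The main obstacle is $T_3^N$, because Assumption \ref{AS-LLN-2} only provides $L^1$-convergence of $\int[\beta^N-\beta](\cdot,y)\phi(y)\,dy$ for a \emph{fixed} bounded $\phi$, while $\bar{\mathfrak{F}}^N$ itself depends on $N$. I handle this by a further decomposition
\[
\textstyle\int_0^1[\beta^N-\beta](x,y)\bar{\mathfrak{F}}^N(s,y)\,dy = \int_0^1[\beta^N-\beta](x,y)\bar{\mathfrak{F}}(s,y)\,dy + \int_0^1[\beta^N-\beta](x,y)[\bar{\mathfrak{F}}^N-\bar{\mathfrak{F}}](s,y)\,dy.
\]
The first piece is bounded uniformly by $2C_\beta C_T$, and for each fixed $s\in[0,T]$ its $L^1(dx)$-norm tends to $0$ by \eqref{conv-beta} applied with $\phi=\bar{\mathfrak{F}}(s,\cdot)$; dominated convergence then gives $\int_0^T\|\cdot\|_1\,ds=:r^N\to 0$. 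The second piece, after integrating $|\cdot|$ in $x$ and applying Fubini with the bound $\int_0^1|\beta^N-\beta|(x,y)\,dx\le 2C_\beta$, contributes at most $(2C_\beta C_B/c_B)\|\bar{\mathfrak{F}}^N(s,\cdot)-\bar{\mathfrak{F}}(s,\cdot)\|_1$, which is absorbed into the Gronwall loop.

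Combining the estimates, there exist constants $C_T'$ (depending on $T,C_B,c_B,C_\beta,\lambda^\ast$) and an error $R^N\to 0$ in probability (collecting $\Psi^N$, $\|\bar{B}^N-\bar{B}\|_\infty$, and $r^N$) such that
\[
\phi^N(t)\le 2\sup_{0\le u\le T}\Psi^N(u)+R^N+C_T'\int_0^t\phi^N(s)\,ds, \qquad 0\le t\le T.
\]
Gronwall's inequality yields $\sup_{0\le t\le T}\phi^N(t)\le\bigl(2\sup_{0\le u\le T}\Psi^N(u)+R^N\bigr)e^{C_T'T}$. Since the right-hand side is a deterministic function of quantities converging to $0$ in probability, passing to an almost surely converging subsequence (and using that the limit is always $0$) produces the desired convergence in probability of the left-hand side.
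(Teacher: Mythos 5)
Your proof is correct and follows essentially the same route as the paper's: a telescoping decomposition of $\bar\Upsilon^N-\bar\Upsilon$, a priori $L^\infty$ bounds from Lemmas~\ref{aprioriN} and~\ref{apriori}, the two column/row bounds in~\eqref{eqn-C-beta}, and Gronwall applied to $\|\bar S^N-\bar S\|_1+\|\bar{\mathfrak{F}}^N-\bar{\mathfrak{F}}\|_1$. The one structural difference is your choice of telescoping order: by putting $[\beta^N-\beta]$ against $\bar{\mathfrak{F}}^N$ in $T_3^N$, you cannot invoke Assumption~\ref{AS-LLN-2} directly (it is stated for a fixed $\phi$), and therefore need a secondary split of $\bar{\mathfrak{F}}^N$ into $\bar{\mathfrak{F}}+(\bar{\mathfrak{F}}^N-\bar{\mathfrak{F}})$ together with dominated convergence to produce the $o(1)$ term $r^N$. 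The paper telescopes the other way, writing $\int\beta^N(\bar{\mathfrak{F}}-\bar{\mathfrak{F}}^N)+\int(\beta-\beta^N)\bar{\mathfrak{F}}$, so that $\beta-\beta^N$ is paired directly with the fixed, bounded limit $\bar{\mathfrak{F}}(s,\cdot)$ and~\eqref{conv-beta} applies immediately without the extra step; both routes lead to the same Gronwall inequality and the same conclusion.
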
 
\begin{proof}
Referring to the notations in Lemmas \ref{aprioriN} and \ref{apriori}, let us assume that $\lambda^\ast\le C_T$. We first upper bound the following difference
\begin{align*}
&\frac{\bar{S}(t,x)}{\bar{B}(x)}\int_0^1\beta(x,y)\bar{\mathfrak{F}}(t,y)dy- \frac{\bar{S}^N(t,x)}{\bar{B}^N(x)}\int_0^1\beta^N(x,y)\bar{\mathfrak{F}}^N(t,y)dy\\
&=\bigg( \frac{\bar{S}(t,x)}{\bar{B}(x)} - \frac{\bar{S}^N(t,x)}{\bar{B}^N(x)} \bigg)\int_0^1\beta^N(x,y)\bar{\mathfrak{F}}^N(t,y)dy\\
&\quad+\frac{\bar{S}(t,x)}{\bar{B}(x)} \left(\int_0^1\beta(x,y)\bar{\mathfrak{F}}(t,y)dy-\int_0^1\beta^N(x,y)\bar{\mathfrak{F}}^N(t,y)dy\right)\\
&\le  C_\beta C_T\bigg| \frac{\bar{S}(t,x)}{\bar{B}(x)} - \frac{\bar{S}^N(t,x)}{\bar{B}^N(x)} \bigg|+\int_0^1\beta^N(x,y)(\bar{\mathfrak{F}}(t,y)-\bar{\mathfrak{F}}^N(t,y))dy \\
&\qquad +\int_0^1(\beta(x,y)-\beta^N(x,y))\bar{\mathfrak{F}}(t,y)dy
\, .
\end{align*}
Note that by \eqref{eqn-barB-condition} and \eqref{eqn-CB}, 
\begin{align*} 
\bigg| \frac{\bar{S}(t,x)}{\bar{B}(x)} - \frac{\bar{S}^N(t,x)}{\bar{B}^N(x)} \bigg| 
&= \bigg| \frac{\bar{S}(t,x)-\bar{S}^N(t,x) }{\bar{B}(x)} + \bar{S}^N(t,x) \bigg(  \frac{1}{\bar{B}(x)} - \frac{1}{\bar{B}^N(x)}\bigg)  \bigg|  \\
& \le c_B^{-1}  |\bar{S}(t,x)-\bar{S}^N(t,x)| + c_B^{-2} C_B|\bar{B}^N(x) - \bar{B}(x) | \,.
\end{align*}
Consequently, 
\begin{align*}
\bigg\| \frac{\bar{S}(t,\cdot)}{\bar{B}(\cdot)} &\int_0^1\beta(\cdot,y)\bar{\mathfrak{F}}(t,y)dy-\frac{\bar{S}^N(t,\cdot)}{\bar{B}^N(\cdot)}\int_0^1\beta^N(\cdot,y)\bar{\mathfrak{F}}^N(t,y)dy\bigg\|_1\\
&\le C_\beta C_T c_B^{-1}\|\bar{S}(t,\cdot)-\bar{S}^N(t,\cdot)\|_1 +  C_\beta C_T  c_B^{-2} C_B\|\bar{B}^N(\cdot) - \bar{B}(\cdot) \|_1  \\
& \quad +\left(\sup_{N,y}\int_0^1\beta^N(x,y)dx\right)\|\bar{\mathfrak{F}}(t,\cdot)-\bar{\mathfrak{F}}^N(t,\cdot)\|_1\\
&\quad+\int_0^1\left|\int_0^1(\beta(x,y)-\beta^N(x,y))\bar{\mathfrak{F}}(t,y)dx\right|dy
\,.
\end{align*}
We can now estimate the norm $\|\bar{S}(t,\cdot)-\bar{S}^N(t,\cdot)\|_1$ and $\|\bar{\mathfrak{F}}(t,\cdot)-\bar{\mathfrak{F}}^N(t,\cdot)\|_1$. Let $\bar{C}:=\max\{C_\beta,C_\beta C_Tc_B^{-1}, \\ C_\beta C_Tc_B^{-2} C_B \}$. 
We  now deduce from  \eqref{eqn-barSn-tx}, \eqref{eq:SF} and the last computation that 
\begin{align*}
\|\bar{S}(t,\cdot)-\bar{S}^N(t,\cdot)\|_1&\le\|\bar{S}(0,\cdot)-\bar{S}^N(0,\cdot)\|_1+\|\bar{M}^N_A(t,\cdot)\|_1 \\
 & \qquad +\int_0^t\int_0^1\left|\int_0^1(\beta(x,y)-\beta^N(x,y))\bar{\mathfrak{F}}(s,y)dx\right|dyds\\&\qquad+ 
\bar{C}\int_0^t\|\bar{S}(s,\cdot)-\bar{S}^N(s,\cdot)\|_1 ds + \bar{C} \|\bar{B}^N(\cdot) - \bar{B}(\cdot) \|_1  \\
&\qquad +\bar{C}\int_0^t\|\bar{\mathfrak{F}}(s,\cdot)-\bar{\mathfrak{F}}^N(s,\cdot)\|_1 ds.
\end{align*}
Next from \eqref{eqn-bar-mfFn-tx} and \eqref{eq:SF}, we get 
\begin{align*}
\|\bar{\mathfrak{F}}(t,\cdot)-\bar{\mathfrak{F}}^N(t,\cdot)\|_1&\le \|\bar{\mathfrak{F}}_0(t,\cdot)-\bar{\mathfrak{F}}^N_0(t,\cdot)\|_1+\|\mathcal{E}^N_{\mathfrak{F}}(t,\cdot)\|_1 \\
& \qquad +\int_0^t\int_0^1\left|\int_0^1(\beta(x,y)-\beta^N(x,y))\bar{\mathfrak{F}}(s,y)dx\right|dyds\\&\qquad+ 
\bar{C}\int_0^t\|\bar{S}(s,\cdot)-\bar{S}^N(s,\cdot)\|_1 ds + \bar{C} \|\bar{B}^N(\cdot) - \bar{B}(\cdot) \|_1 \\
& \qquad +\bar{C}\int_0^t\|\bar{\mathfrak{F}}(s,\cdot)-\bar{\mathfrak{F}}^N(s,\cdot)\|_1 ds\,.
\end{align*}
Adding those two inequalities, the result follows from our assumptions, the fact that \eqref{conv-beta} in Assumption 
\ref{AS-LLN-2} implies that
\[ \int_0^t\int_0^1\left|\int_0^1(\beta(x,y)-\beta^N(x,y))\bar{\mathfrak{F}}(s,y)dx\right|dyds\to0\quad \text{ as } \quad N\to\infty,\]
and
the following variant of Gronwall's Lemma: if $f(t)$ and $g(t)$
are nonnegative real-valued functions of $t$ and satisfy $f(t)\le g(t)+c\int_0^t f(s)ds$ for all $0\le t\le T$ and for some $c>0$, then for those $t$, $f(t)\le g(t)+c\int_0^te^{c(t-s)}g(s)ds$.
\end{proof}

 It remains to show that $\sup_{0\le t\le T}\Psi^N(t)\to0$ in probability, which follows from the 
  next three lemmas, where we establish the convergence of $\bar{\mathfrak{F}}^N_0(t,\cdot)$ to $ \bar{\mathfrak{F}}_0(t,x)$,  and that the stochastic terms  $ \bar{M}^N_A(t,x)$,  $\Delta^{N}_{1,1}(t,x)$ and $ \Delta^{N}_{1,2}(t,x)$  of \eqref{eqn-barM-An-tx}, \eqref{eqn-Delta-11} and \eqref{eqn-Delta-12} tend to $0$ in probability, as $N\to\infty$.


\begin{lemma} \label{lem-mfN0-conv}
Under Assumptions \ref{AS-LLN-1} and \ref{AS-lambda},  
\begin{equation} 
\|\bar{\mathfrak{F}}^N_0(t,\cdot) - \bar{\mathfrak{F}}_0(t,\cdot)\|_1 \to 0
\end{equation}
in probability, locally uniformly in $t$, as $N \to \infty$, where $\bar{\mathfrak{F}}_0(t,x) $ is defined in \eqref{eqn-bar-mfF-0}. 
\end{lemma}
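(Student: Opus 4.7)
The plan is to split $\bar{\mathfrak F}^N_0$ into a conditional--mean piece and a centered fluctuation piece,
\[
\bar{\mathfrak F}^N_0(t,x) \;=\; U^N(t,x) + V^N(t,x),\qquad U^N(t,x):=\sum_{k=1}^{K^N}\frac{K^N}{N}\sum_{j=1}^{I^N_k(0)}\bar\lambda(\tilde\tau^N_{-j,k}+t)\,{\bf 1}_{\mathtt I_k}(x),
\]
to be handled by distinct arguments. Since $\mfa\mapsto\bar\sI^N(0,\mfa,x)$ is exactly the distribution function of the $(K^N/N)$--weighted empirical measure of the initial infection ages in the location containing $x$, the mean piece rewrites as the Stieltjes integral
\[
U^N(t,x)\;=\;\int_0^{\bar{\mfa}}\bar\lambda(\mfa+t)\,\bar\sI^N(0,d\mfa,x).
\]

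For $U^N$ I would invoke Lemma \ref{convPort} with $f_N(\mfa,x)=\bar\sI^N(0,\mfa,x)$, $f(\mfa,x)=\bar\sI(0,\mfa,x)$ and $g(\mfa)=\bar\lambda(\mfa+t)$. The hypotheses hold: $f_N(\cdot,x)$ is non--decreasing, $f(\cdot,x)$ is continuous by Assumption \ref{AS-LLN-1}, $\|f_N(\mfa,\cdot)-f(\mfa,\cdot)\|_1\to 0$ in probability for each $\mfa$ by Assumption \ref{AS-LLN-1}, and $g$ is bounded by $\lambda^\ast$ by Assumption \ref{AS-lambda}. The lemma then yields $\|U^N(t,\cdot)-\bar{\mathfrak F}_0(t,\cdot)\|_1\to0$ in probability for each fixed $t$.

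For $V^N$, condition on the $\sigma$--algebra $\mathcal G_0$ generated by $\{I^N_k(0),\tilde\tau^N_{-j,k}\}$. Under $\mathcal G_0$ the summands $\lambda_{-j,k}(\tilde\tau^N_{-j,k}+t)-\bar\lambda(\tilde\tau^N_{-j,k}+t)$ are independent, centered, and bounded by $2\lambda^\ast$. Jensen's inequality and the conditional variance bound give
\[
\E\|V^N(t,\cdot)\|_1 \;\le\; \sum_{k=1}^{K^N}|\mathtt I_k|\,\frac{K^N}{N}\,(2\lambda^\ast)\,\E\sqrt{I^N_k(0)} \;=\; \frac{2\lambda^\ast}{N}\sum_{k=1}^{K^N}\E\sqrt{I^N_k(0)} \;\le\; 2\lambda^\ast\sqrt{C_B K^N/N},
\]
using $|\mathtt I_k|=(K^N)^{-1}$, $I^N_k(0)\le B^N_k\le C_B N/K^N$, and Cauchy--Schwarz. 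By \eqref{eqn-KB-condition} this tends to zero, so adding the two pieces yields pointwise-in-$t$ convergence of $\|\bar{\mathfrak F}^N_0(t,\cdot)-\bar{\mathfrak F}_0(t,\cdot)\|_1$ to $0$ in probability.

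To upgrade to locally uniform convergence, the plan is to verify condition (ii) of Theorem \ref{thm-D-conv-x} for $X^N(t,\cdot):=\bar{\mathfrak F}^N_0(t,\cdot)-\bar{\mathfrak F}_0(t,\cdot)$. The continuity of $\bar{\mathfrak F}_0(\cdot,x)$ asserted in Theorem \ref{thm-FLLN} handles the limit, so what remains is an oscillation estimate on $\bar{\mathfrak F}^N_0$. Using the representation $\lambda=\sum_\ell\lambda^\ell\,{\bf 1}_{[\zeta^{\ell-1},\zeta^\ell)}$ from Assumption \ref{AS-lambda}, each individual increment $\lambda_{-j,k}(\tilde\tau^N_{-j,k}+t+u)-\lambda_{-j,k}(\tilde\tau^N_{-j,k}+t)$ is bounded by $\varphi(\delta)$ on the event that no $\zeta^\ell_{-j,k}$ lies in $(\tilde\tau^N_{-j,k}+t,\tilde\tau^N_{-j,k}+t+\delta]$, and by $\lambda^\ast$ otherwise, the latter event having probability at most $\sum_{\ell=1}^\kappa[F_\ell(\tilde\tau^N_{-j,k}+t+\delta)-F_\ell(\tilde\tau^N_{-j,k}+t)]$. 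Averaging over individuals and applying Markov together with a Fubini step in $t$ (using $\int_0^T\sum_\ell[F_\ell(t+\delta)-F_\ell(t)]\,dt\le\kappa\delta$) absorbs the factor $1/\delta$ in Theorem \ref{thm-D-conv-x}(ii). The main obstacle is precisely this last point: controlling the jump contribution uniformly in $t\in[0,T]$ when the $F_\ell$ are allowed atoms, which forces the estimate to be carried out in integrated form rather than pointwise in $t$.
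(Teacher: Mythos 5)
Your decomposition into the conditional-mean piece $U^N$ and the centered fluctuation piece $V^N$ is exactly the paper's decomposition into $\Delta^N_{0,2}$ and $\Delta^N_{0,1}$, and your pointwise-in-$t$ argument for condition (i) of Theorem \ref{thm-D-conv-x} parallels the paper's (you use a conditional first-moment/Cauchy--Schwarz bound on $V^N$ where the paper uses a second-moment bound; this is cosmetic). The genuine gap is in your treatment of condition (ii). The criterion requires a bound on
\[
\sup_{t\in[0,T]}\tfrac{1}{\delta}\,\P\Big(\sup_{u\in[0,\delta]}\|X^N(t+u,\cdot)-X^N(t,\cdot)\|_1>\ep\Big),
\]
that is, a \emph{pointwise-in-$t$} estimate with a $\sup_t$ outside. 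Your proposed ``Fubini step in $t$,'' using $\int_0^T\sum_\ell[F_\ell(t+\delta)-F_\ell(t)]\,dt\le\kappa\delta$, produces an $L^1$-in-$t$ bound, which cannot control the supremum: a quantity whose time-average is $O(\delta)$ can still be of order one on a set of $t$'s of measure $\delta$. You even acknowledge this as the ``main obstacle'' and then conclude the estimate is ``forced to be in integrated form,'' but that would mean you have \emph{not} verified the hypothesis of Theorem \ref{thm-D-conv-x}.

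The resolution the paper uses is to show that after centering (the centered part is handled by a variance estimate that vanishes as $N\to\infty$ uniformly in $t$), the jump contribution is dominated by the deterministic quantity
\[
\sum_{\ell=1}^\kappa\int_0^1\int_0^{\bar\mfa}\big(F_\ell(\mfa+t+\delta)-F_\ell(\mfa+t)\big)\,\bar{\sI}(0,d\mfa,x)\,dx,
\]
and that this quantity tends to zero as $\delta\to0$ \emph{uniformly} in $t\in[0,T]$. The uniformity you were worried about is exactly where Assumption \ref{AS-LLN-1} enters: $\mfa\mapsto\bar{\sI}(0,\mfa,x)$ is continuous, so $\bar{\sI}(0,d\mfa,x)$ is non-atomic, which makes the convolution $r\mapsto\int_0^1\int_0^{\bar\mfa}F_\ell(\mfa+r)\,\bar{\sI}(0,d\mfa,x)\,dx$ continuous in $r$ (the at most countably many discontinuities of $F_\ell$ have zero $\bar{\sI}(0,d\mfa,x)$-measure), hence uniformly continuous on $[0,T+\bar\mfa+1]$. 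The atoms of $F_\ell$ therefore do not cause trouble pointwise in $t$, the estimate does \emph{not} need to be carried out in integrated form, and one gets the stronger conclusion that the $\limsup_N\sup_t$ probability is actually $0$ once $\delta$ is small enough (so the factor $1/\delta$ is harmless). You should replace the Fubini step by this uniform-continuity argument.
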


\begin{proof}
We apply Theorem \ref{thm-D-conv-x}. 
First, we have
\[
 \bar{\mathfrak{F}}^N_0(t,x) - \bar{\mathfrak{F}}_0(t,x) = \Delta^{N}_{0,1}(t,x) +   \Delta^{N}_{0,2}(t,x) ,
\]
where 
\begin{align*}
 \Delta^{N}_{0,1}(t,x) &=\sum_{k=1}^{K^N}  \frac{K^N}{N} \sum_{j: -j\in \cT^N_k(0)} \Big(\lambda_{-j,k}(\tilde{\tau}^N_{-j,k}+ t) -  \bar\lambda(\tilde{\tau}^N_{-j,k}+ t) \Big){\bf 1}_{\mathtt{I}_k}(x)\,,  \\
  \Delta^{N}_{0,2}(t,x) &=  \sum_{k=1}^{K^N}  \frac{K^N}{N} \sum_{j: -j\in \cT^N_k(0)}   \bar\lambda(\tilde{\tau}^N_{-j,k}+ t){\bf 1}_{\mathtt{I}_k}(x) -   \int_0^{\bar{\mfa}} \bar{\lambda}(\mfa+t)  \bar{\sT}(0,d \mfa, x)\\
&  =  \int_0^{\bar{\mfa}} \bar{\lambda}(\mfa+t)  [\bar{\sT}^N(0,d \mfa, x)-\bar{\sT}(0,d \mfa, x)]\,.
\end{align*}

We now verify condition (i) of Theorem \ref{thm-D-conv-x}. 
For the first term $ \Delta^{N}_{0,1}(t,x)$,  we have
\begin{align*}
\|  \Delta^{N}_{0,1}(t,\cdot)\|_1 
& \le   \frac{1}{K^N} \sum_{k=1}^{K^N}  \frac{K^N}{N} \left|\sum_{j: -j\in \cT^N_k(0)}  \Big(\lambda_{-j,k}(\tilde{\tau}^N_{-j,k}+ t) -  \bar\lambda(\tilde{\tau}^N_{-j,k}+ t) \Big)\right|\,.
\end{align*}
Here the summands over $k$ are independent, and for each $k$, conditional on $\{\tilde{\tau}^N_{-j,k}\}_j$, the summands over $j$ are also independent and centered. 
Using Jensen's inequality for the sum over $k$, and the conditional independence for the sum over $j$, we deduce
\begin{align*}
&  \E\left[ \left( \frac{1}{K^N} \sum_{k=1}^{K^N} \frac{K^N}{N} \left|\sum_{j: -j\in \cT^N_k(0)}  \Big(\lambda_{-j,k}(\tilde{\tau}^N_{-j,k}+ t) -  \bar\lambda(\tilde{\tau}^N_{-j,k}+ t) \Big)\right| \right)^2 \right]  \\
& \le   \E\left[ \frac{1}{K^N}\sum_{k=1}^{K^N}  \frac{K^N}{N} \int_0^{\bar{\mfa}} v(\mfa+t) \bar{\sT}_k^N(0,d \mfa)\right] \to 0 \qasq N \to \infty,
\end{align*}
since under Assumption \ref{AS-LLN-1},  thanks to Lemma \ref{convPort}, 
\[
\frac{1}{K^N}\sum_{k=1}^{K^N}   \int_0^{\bar{\mfa}} v(\mfa+t)\bar{\sT}_k^N(0,d \mfa)  \to   \int_0^1 \int_0^{\bar{\mfa}} v(\mfa+t) \bar{\sT}(0,d \mfa,x)dx 
\]
in probability  and $\frac{K^N}{N} \to 0 $ as $N \to \infty$. Recall that $v(t)$ is the variance of the random function $\lambda(t)$ in Assumption \ref{AS-lambda}, which is bounded.

The fact that $\|\Delta^N_{0,2}\|_1\to0$ in probability follows again from Lemma \ref{convPort} and Assumption \ref{AS-LLN-1}. 
\smallskip

Now to check condition (ii) of Theorem \ref{thm-D-conv-x}, we first have for $t, u >0$, 
\begin{align*}
 & \Delta^{N}_{0,1}(t+u,x) -  \Delta^{N}_{0,1}(t,x) \\
 & = \sum_{k=1}^{K^N}  \frac{K^N}{N} \sum_{j: -j \in \cT^N_k(0)} \Big(\lambda_{-j,k}(\tilde{\tau}^N_{-j,k}+ t+u) -  \lambda_{-j,k}(\tilde{\tau}^N_{-j,k}+ t)  \Big){\bf 1}_{\mathtt{I}_k}(x) \\
 & \quad - \sum_{k=1}^{K^N} \frac{K^N}{N} \sum_{j: -j \in \cT^N_k(0)} \Big( \bar\lambda(\tilde{\tau}^N_{-j,k}+ t+u) - \bar\lambda(\tilde{\tau}^N_{-j,k}+ t) \Big){\bf 1}_{\mathtt{I}_k}(x)\,. 
 \end{align*}
Observe that
\begin{align*}
& \left\|\sum_{k=1}^{K^N} \frac{K^N}{N} \sum_{j: -j \in \cT^N_k(0)} \Big(\lambda_{-j,k}(\tilde{\tau}^N_{-j,k}+ t+u) - \lambda_{-j,k}(\tilde{\tau}^N_{-j,k}+ t)  \Big){\bf 1}_{\mathtt{I}_k}(x)  \right\|_1\\
&\quad \le \frac{1}{K^N} \sum_{k=1}^{K^N} \frac{K^N}{N} \sum_{j: -j \in \cT^N_k(0)} \Big|\lambda_{-j,k}(\tilde{\tau}^N_{-j,k}+ t+u) - \lambda_{-j,k}(\tilde{\tau}^N_{-j,k}+ t)  \Big| \,, 
\end{align*}
and similarly for the second term. Thus, 
\begin{align*}
 \| \Delta^{N}_{0,1}(t+u,x) -  \Delta^{N}_{0,1}(t,x)\|_1 & \le \frac{1}{K^N} \sum_{k=1}^{K^N} \frac{K^N}{N} \sum_{j: -j \in \cT^N_k(0)} \Big|\lambda_{-j,k}(\tilde{\tau}^N_{-j,k}+ t+u) - \lambda_{-j,k}(\tilde{\tau}^N_{-j,k}+ t)  \Big| \\
 & \quad + \frac{1}{K^N} \sum_{k=1}^{K^N}  \frac{K^N}{N} \sum_{j: -j \in \cT^N_k(0)}   \Big| \bar\lambda(\tilde{\tau}^N_{-j,k}+ t+u) - \bar\lambda(\tilde{\tau}^N_{-j,k}+ t) \Big| \\
 &=:  \Delta^{N, (1)}_{0,1}(t,u) + \Delta^{N, (2)}_{0,1}(t,u)\,.
 \end{align*}
 By Assumption \ref{AS-lambda}, using the expression of $\lambda(t)$ in \eqref{eqn-lambda-assump}, that is, $\lambda_{-j,k}(t) = \sum_{\ell=1}^\kappa \lambda^\ell_{-j,k}(t) \bone_{[\zeta_{-j,k}^{\ell-1},\zeta_{-j,k}^\ell)}(t)$, we obtain 
\begin{align} \label{eqn-Delta01-1-bound} 
& \Delta^{N, (1)}_{0,1}(t,u)  \\
&=  \frac{1}{K^N}\sum_{k=1}^{K^N} \frac{K^N}{N} \sum_{j: -j \in \cT^N_k(0)} \bigg|\sum_{\ell=1}^\kappa \lambda^\ell_{-j,k}(\tilde{\tau}^N_{-j,k}+ t+u) \bone_{[\zeta_{-j,k}^{\ell-1},\zeta_{-j,k}^\ell)}(\tilde{\tau}^N_{-j,k}+ t+u)  \non \\
& \qquad \qquad \qquad \qquad - \sum_{\ell=1}^\kappa \lambda^\ell_{-j,k}(\tilde{\tau}^N_{-j,k}+ t) \bone_{[\zeta_{-j,k}^{\ell-1},\zeta_{-j,k}^\ell)}(\tilde{\tau}^N_{-j,k}+ t) \bigg|  \non \\
&\le  \frac{1}{K^N} \sum_{k=1}^{K^N}  \frac{K^N}{N} \sum_{j: -j \in \cT^N_k(0)} \sum_{\ell=1}^\kappa \Big| \lambda^\ell_{-j,k}(\tilde{\tau}^N_{-j,k}+ t+u) - \lambda^\ell_{-j,k}(\tilde{\tau}^N_{-j,k}+ t)\Big| \bone_{\zeta_{-j,k}^{\ell-1} \le \tilde{\tau}^N_{-j,k}+ t \le \tilde{\tau}^N_{-j,k}+ t+u \le \zeta_{-j,k}^\ell} \non  \\
& \quad +  \lambda^* \frac{1}{K^N} \sum_{k=1}^{K^N}  \frac{K^N}{N} \sum_{j: -j \in \cT^N_k(0)} \sum_{\ell=1}^\kappa  
 \bone_{\tilde{\tau}^N_{-j,k}+ t \le \zeta_{-j,k}^\ell \le \tilde{\tau}^N_{-j,k}+ t+u } \non \\
 & \le \varphi(u)  \frac{1}{K^N} \sum_{k=1}^{K^N}  (\bar{I}^N_k(0) + \bar{R}^N_k(0))+  \lambda^* \frac{1}{K^N} \sum_{k=1}^{K^N}  \frac{K^N}{N} \sum_{j: -j \in \cT^N_k(0)}  \sum_{\ell=1}^\kappa  
 \bone_{\tilde{\tau}^N_{-j,k}+ t \le \zeta_{-j,k}^\ell \le \tilde{\tau}^N_{-j,k}+ t+u }\,.
\end{align}
Since both terms in the right hand side are increasing in $u$, we obtain
\begin{align}\label{eqn-mfN0-conv-p0}
\sup_{u \in [0,\delta]}\Delta^{N, (1)}_{0,1}(t,u)  &  \le \varphi(\delta) \frac{1}{K^N} \sum_{k=1}^{K^N}  (\bar{I}^N_k(0) + \bar{R}^N_k(0) )  \nonumber \\
&\quad + \lambda^*  \sum_{\ell=1}^\kappa \frac{1}{K^N} \sum_{k=1}^{K^N} \frac{K^N}{N} \sum_{j: -j \in \cT^N_k(0)} 
 \bone_{\tilde{\tau}^N_{-j,k}+ t \le \zeta_{-j,k}^\ell \le \tilde{\tau}^N_{-j,k}+ t+\delta}\,. 
\end{align}
Note that \[\frac{1}{K^N} \sum_{k=1}^{K^N}  ( \bar{I}^N_k(0)  + \bar{R}^N_k(0) ) \to \int_0^1 ( \bar{I}(0,x)+ \bar{R}(0,x)) dx \qasq N \to \infty\]  under Assumption \ref{AS-LLN-1}. 
For the second term in \eqref{eqn-mfN0-conv-p0}, we have
\begin{align}\label{eqn-mfN0-conv-p1}
&   \sum_{\ell=1}^\kappa \frac{1}{K^N} \sum_{k=1}^{K^N} \frac{K^N}{N} \sum_{j: -j \in \cT^N_k(0)}  
 \bone_{\tilde{\tau}^N_{-j,k}+ t \le \zeta_{-j,k}^\ell \le \tilde{\tau}^N_{-j,k}+ t+\delta  } \non\\
 & =    \sum_{\ell=1}^\kappa \frac{1}{K^N} \sum_{k=1}^{K^N}  \frac{K^N}{N} \sum_{j: -j \in \cT^N_k(0)}    
\Big[  \bone_{\tilde{\tau}^N_{-j,k}+ t \le \zeta_{-j,k}^\ell \le \tilde{\tau}^N_{-j,k}+ t+\delta  }  - \Big(F_\ell(\tilde{\tau}^N_{-j,k}+ t+\delta) -F_\ell(\tilde{\tau}^N_{-j,k}+ t)   \Big)  \Big] \non \\
& \quad +  \sum_{\ell=1}^\kappa \frac{1}{K^N} \sum_{k=1}^{K^N}  \frac{K^N}{N} \sum_{j: -j \in \cT^N_k(0)}  
\Big(F_\ell(\tilde{\tau}^N_{-j,k}+ t+\delta) -F_\ell(\tilde{\tau}^N_{-j,k}+ t)   \Big) \,. 
\end{align}
In the first expression, for each $k$, conditional on $\{\tilde{\tau}^N_{-j,k}\}_j$, the summands over $j$ are  independent. 
We have
\begin{align*}
& \E\left[ \left(\frac{1}{K^N} \sum_{k=1}^{K^N}  \frac{K^N}{N} \sum_{j: -j \in \cT^N_k(0)}    
\Big[  \bone_{\tilde{\tau}^N_{-j,k}+ t \le \zeta_{-j,k}^\ell \le \tilde{\tau}^N_{-j,k}+ t+\delta  }  - \Big(F_\ell(\tilde{\tau}^N_{-j,k}+ t+\delta) -F_\ell(\tilde{\tau}^N_{-j,k}+ t)   \Big)  \Big] \right)^2\right] \\
& \le    \E \left[  \frac{1}{K^N} \sum_{k=1}^{K^N}  \Big(\frac{K^N}{N}\Big)^2 \Bigg( \sum_{j: -j \in \cT^N_k(0)}  
\Big[  \bone_{\tilde{\tau}^N_{-j,k}+ t \le \zeta_{-j,k}^\ell \le \tilde{\tau}^N_{-j,k}+ t+\delta  }  - \Big(F_\ell(\tilde{\tau}^N_{-j,k}+ t+\delta) -F_\ell(\tilde{\tau}^N_{-j,k}+ t)   \Big)  \Big] \Bigg)^2  \right] \\
& =    \E \left[  \frac{1}{K^N} \sum_{k=1}^{K^N}  \Big(\frac{K^N}{N}\Big)^2 \sum_{j: -j \in \cT^N_k(0)}    
\Big[  \bone_{\tilde{\tau}^N_{-j,k}+ t \le \zeta_{-j,k}^\ell \le \tilde{\tau}^N_{-j,k}+ t+\delta  }  - \Big(F_\ell(\tilde{\tau}^N_{-j,k}+ t+\delta) -F_\ell(\tilde{\tau}^N_{-j,k}+ t)   \Big)  \Big]^2  \right] \\
&=   \E \Bigg[  \frac{1}{K^N} \sum_{k=1}^{K^N} \Big(\frac{K^N}{N}\Big)^2 \sum_{j: -j \in \cT^N_k(0)}   
\Big[  \Big(F_\ell(\tilde{\tau}^N_{-j,k}+ t+\delta) -F_\ell(\tilde{\tau}^N_{-j,k}+ t)   \Big)  \\
& \qquad \qquad \qquad \qquad \qquad \qquad  \times \Big(1-\Big(F_\ell(\tilde{\tau}^N_{-j,k}+ t+\delta) -F_\ell(\tilde{\tau}^N_{-j,k}+ t)   \Big) \Big)  \Big]  \Bigg] \\
& \le \frac{1}{K^N} \sum_{k=1}^{K^N} \Big(\frac{K^N}{N}\Big)^2  ( I^N_k(0) + R^N_k(0)) \frac{1}{4}   \\
& \le  \frac{1}{4}  \frac{1}{K^N} \Big(\frac{K^N}{N}\Big)^2 N  =  \frac{1}{4}\frac{K^N}{N} \\
&\to 0\qasq N \to \infty\,. 
\end{align*}
Hence, the first term in \eqref{eqn-mfN0-conv-p1} converges to zero in probability as $N\to\infty$. For the second term in \eqref{eqn-mfN0-conv-p1}, we have 
\begin{align*}
&   \sum_{\ell=1}^\kappa \frac{1}{K^N} \sum_{k=1}^{K^N}   \frac{K^N}{N} \sum_{j: -j \in \cT^N_k(0)} 
\Big(F_\ell(\tilde{\tau}^N_{-j,k}+ t+\delta) -F_\ell(\tilde{\tau}^N_{-j,k}+ t)   \Big) \\
& =  \sum_{\ell=1}^\kappa \frac{1}{K^N} \sum_{k=1}^{K^N}  \int_0^{\bar{\mfa}} 
\Big(F_\ell(\mfa+ t+\delta) -F_\ell(\mfa+ t)   \Big)  \bar{\sI}^N_k(0, d\mfa) \\
& \to   \sum_{\ell=1}^\kappa\int_0^1 \int_0^{\bar{\mfa}} 
\Big(F_\ell(\mfa+ t+\delta) -F_\ell(\mfa+ t)   \Big)  \bar{\sI}(0, d\mfa, x)dx \,,
\end{align*}
in probability as $N\to \infty$. 
For each $\ell=1,\dots, \kappa$,  the function $\delta \to  \int_0^1 \int_0^{\bar{x}} 
\Big(F_\ell(\mfa+ t+\delta) -F_\ell(\mfa+ t)   \Big)  \bar{\sI}(0, d\mfa, x) dx$ is continuous and equal to zero at $\delta=0$.
Thus we have shown that for any $\ep>0$, there exists $\delta>0$ small enough such that 
\begin{align} \label{eqn-mfN0-conv-p2}
\limsup_{N\to\infty}\sup_{t\in [0,T]}  \frac{1}{\delta}\P \left( \sup_{0 \le u \le \delta}  \Delta^{N, (1)}_{0,1}(t,u)  > \ep/2\right)  = 0. 
\end{align}

 Note that
\begin{align}\label{201}
\Delta^{N, (2)}_{0,1}(t,u)=\int_0^1\int_0^{\bar{\mfa}}  \left|\bar{\lambda}(\mfa+t+u)-\bar{\lambda}(\mfa+t)\right|\bar{\sI}^N(0,d\mfa,x)dx\,. 
\end{align} 
By similar calculations leading to \eqref{eqn-mfN0-conv-p0}, we obtain  for any small enough $\delta>0$, 
\begin{align*}
\sup_{u \in [0,\delta]}\Delta^{N, (2)}_{0,1}(t,u)  & \le \varphi(\delta) \frac{1}{K^N} \sum_{k=1}^{K^N}  \bar{I}^N_k(0) \\
& \qquad + \lambda^*  \sum_{\ell=1}^\kappa  \frac{1}{K^N} \sum_{k=1}^{K^N}  \frac{K^N}{N}  \sum_{j: -j \in \cT^N_k(0)}   
\Big(F_\ell(\tilde{\tau}^N_{-j,k}+ t+\delta) -F_\ell(\tilde{\tau}^N_{-j,k}+ t)   \Big) \,. 
\end{align*}
Thus, by the same arguments for these two terms as in the proof for \eqref{eqn-mfN0-conv-p2}, we obtain that \eqref{eqn-mfN0-conv-p2} holds for $\Delta^{N, (2)}_{0,1}(t,u)$. 
Thus, combining these two results, we obtain that for any $\ep>0$, for $\delta>0$ small enough, 
\begin{align} \label{eqn-mfN0-conv-p3}
\limsup_{N\to\infty}\sup_{t\in [0,T]}  \frac{1}{\delta}\P \left( \sup_{0 \le u \le \delta}   \| \Delta^{N}_{0,1}(t+u,x) -  \Delta^{N}_{0,1}(t,x)\|_1   > \ep\right)  =0\,. 
\end{align}

Now for $\Delta^{N}_{0,2}(t,x)$, we have for $t,u>0$,
\begin{align*}
& \|\Delta^{N}_{0,2}(t+u,x) - \Delta^{N}_{0,2}(t,x)\|_1 \\
&\le\int_0^1\int_0^{\bar{\mfa}}  |\bar{\lambda}(\mfa+t+u)-\bar{\lambda}(\mfa+t)|[\bar{\sI}^N(0,d\mfa,x)+\bar{\sI}(0,d\mfa,x)]dx,
\end{align*}
which  is treated exactly as $\Delta^{N, (2)}_{0,1}(t,u)$, see formula \eqref{201}. 
This completes the proof of the lemma. 
\end{proof}

\begin{lemma} \label{lem-barAn-tight}
Under Assumptions \ref{AS-LLN-1}, \ref{AS-LLN-2} and \ref{AS-lambda}, 
 for all $T>0$, 
 \begin{align}\label{MAto0}  
\E\bigg[\sup_{t \in [0,T]} \| \bar{M}^N_A(t,\cdot)\|^2_1\bigg]\to0\, ,
\end{align}
and thus,
\begin{equation}\label{eqn-barA-Upsilon-int}
\Big\| \bar{A}^N(t,\cdot) - \int_0^t \bar{\Upsilon}^N(s,\cdot) ds\Big\|_1 \to 0
\end{equation}
in probability, locally uniformly in $t$.

In addition,  there exists $C_T>0$ such that for all $N\ge1$,
\begin{equation}\label{momentestim}
\E\left[\sup_{t\le T}\|\bar{A}^N(t,\cdot)\|_1\right] \le C_T\,.
\end{equation}


\end{lemma}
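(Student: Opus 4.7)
The key observation is that all three statements follow from a martingale calculation on the scaled local compensated counting process. For each $k$, set
\[
M^N_k(t) := \frac{K^N}{N}\int_0^t \int_0^\infty \mathbf{1}_{u \le \Upsilon^N_k(s)} \overline{Q}_k(ds,du),
\]
so that $\bar{M}^N_A(t,x) = \sum_{k=1}^{K^N} M^N_k(t)\mathbf{1}_{\mathtt{I}_k}(x)$ and, in particular,
\[
\|\bar{M}^N_A(t,\cdot)\|_1 = \frac{1}{K^N}\sum_{k=1}^{K^N} |M^N_k(t)|.
\]
The processes $\{M^N_k,\,1\le k\le K^N\}$ are (mutually orthogonal) square-integrable martingales with predictable quadratic variation $\langle M^N_k\rangle_t = (K^N/N)^2\int_0^t \Upsilon^N_k(s)ds$. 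Since for $x\in\mathtt{I}_k$ one has $\bar{\Upsilon}^N(s,x) = (K^N/N)\Upsilon^N_k(s)$, the a priori bound of Lemma \ref{aprioriN} together with Assumption \ref{AS-LLN-2} and \eqref{eqn-barB-condition} yields a deterministic constant $C_1 := \lambda^\ast C_B^2 C_\beta/c_B$ such that $0\le\bar{\Upsilon}^N(s,x)\le C_1$ uniformly in $N$, $s$, $x$.

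For part one, I would apply Jensen's inequality over $k$ after taking the supremum in $t$:
\[
\Bigl(\sup_{t\le T}\|\bar{M}^N_A(t,\cdot)\|_1\Bigr)^2 \le \Bigl(\frac{1}{K^N}\sum_k \sup_{t\le T}|M^N_k(t)|\Bigr)^2 \le \frac{1}{K^N}\sum_k \sup_{t\le T}|M^N_k(t)|^2.
\]
Taking expectation and using Doob's maximal inequality gives
\[
\E\Bigl[\sup_{t\le T}\|\bar{M}^N_A(t,\cdot)\|_1^2\Bigr] \le \frac{4}{K^N}\sum_k \E[\langle M^N_k\rangle_T] = \frac{4K^N}{N}\int_0^T \E[\|\bar{\Upsilon}^N(s,\cdot)\|_1]\,ds \le \frac{4K^N C_1 T}{N},
\]
which tends to $0$ by \eqref{eqn-KB-condition}. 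This proves \eqref{MAto0}. The second statement \eqref{eqn-barA-Upsilon-int} is then immediate: the representation \eqref{eqn-An-k-rep} yields
\[
\bar{A}^N(t,x) - \int_0^t \bar{\Upsilon}^N(s,x)\,ds = \bar{M}^N_A(t,x),
\]
and the $L^1$--norm of the right-hand side converges to $0$ in probability (in fact in $L^2$ uniformly in $t\in[0,T]$) by \eqref{MAto0}.

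For the moment bound \eqref{momentestim}, the cleanest route is to use the path-wise identity $\bar{S}^N_k(t) = \bar{S}^N_k(0) - \bar{A}^N_k(t)$ together with the non-negativity of $\bar{S}^N$. This forces $0\le \bar{A}^N(t,x)\le \bar{S}^N(0,x)\le C_B$ for every $N$, $t$ and $x$, and integrating in $x$ gives $\sup_{t\le T}\|\bar{A}^N(t,\cdot)\|_1\le C_B$, so one may take $C_T=C_B$. No delicate estimate is required here; the nontrivial content of the lemma is \eqref{MAto0}, whose proof is itself routine once the correct Jensen/Doob combination is applied. Thus there is no real obstacle: the only thing to check with care is the scaling $(K^N/N)^2$ in the quadratic variation, which after summing over $K^N$ locations produces exactly the small prefactor $K^N/N\to 0$ that drives the convergence.
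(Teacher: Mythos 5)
Your argument for \eqref{MAto0} is essentially the paper's: bound $\|\bar{M}^N_A(t,\cdot)\|_1$ by $\frac{1}{K^N}\sum_k |\bar{M}^N_{A,k}(t)|$, apply Jensen over $k$ and Doob's maximal inequality per martingale, and observe that the per-location quadratic variation carries the extra factor $K^N/N\to 0$. One small remark: the sharper a priori bound is $\bar\Upsilon^N(s,x)\le\lambda^\ast C_B C_\beta$ (using $\bar{S}^N(s,x)\le\bar{B}^N(x)$, so $\bar{S}^N/\bar{B}^N\le1$), rather than your $\lambda^\ast C_B^2 C_\beta/c_B$; your constant is larger but still finite and uniform, so nothing breaks. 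For the moment bound \eqref{momentestim} you take a genuinely different and in fact simpler route: the pathwise identity $\bar{A}^N(t,x)=\bar{S}^N(0,x)-\bar{S}^N(t,x)\le\bar{S}^N(0,x)\le C_B$ gives a deterministic bound with $C_T=C_B$, whereas the paper obtains it by writing $\bar{A}^N=\int_0^\cdot\bar\Upsilon^N+\bar{M}^N_A$, bounding the drift by $\lambda^\ast C_B C_\beta T$ and controlling the martingale part through \eqref{MAto0}. Both yield the claim; yours avoids the second-moment estimate entirely for this step and is the cleaner argument.
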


\begin{proof}
Recall the expressions of $A^N_k(t)$ in \eqref{eqn-An-k-rep} and $\Upsilon^N_k(t)$ in \eqref{eqn-upsilon}. By \eqref{eqn-mfk}, under Assumption \ref{AS-lambda} that $\lambda(t) \le \lambda^*$,  under the condition on $\bar{B}(x)$ in \eqref{eqn-barB-condition}, and  \eqref{eqn-CB}, we have $\bar{\mathfrak{F}}^N(t,x) \le \lambda^* C_B$ and thus, under Assumption \ref{AS-LLN-2}, $\bar\Upsilon^N(t,x)\le \lambda^* C_BC_\beta $, where we have used \eqref{eqn-C-beta}. 
Hence 
\begin{equation} \label{eqn-int-Upsilon-bound-1} 
\| \bar\Upsilon^N(t,\cdot) \|_1\le\lambda^* C_BC_\beta,
\end{equation}
 and
\begin{align} \label{eqn-int-Upsilon-bound} 
\Big\| \int_0^t \bar\Upsilon^N(r,\cdot)dr -\int_0^s \bar\Upsilon^N(r,\cdot)dr  \Big\|_1  \le \lambda^*  C_BC_\beta (t-s) \,. 
\end{align}

For each $k$, we can write  
\begin{equation*}\label{eqn-barAn-decomp}
\bar{A}^N_k(t) = \int_0^t \bar\Upsilon^N_k(s)ds + \bar{M}^N_{A,k}(t)
\end{equation*}
where 
\[
\bar{M}^N_{A,k}(t) = \frac{K^N}{N}\int_0^t \int_0^\infty \bone_{u \le \Upsilon^N_k(s^-)}\bar{Q}_k(ds,du)
\]
with $\bar{Q}_k(ds,du) = Q_k(ds,du) - ds du$ being the compensated PRM associated with $Q_k$. 
We have the  representation: 
\begin{equation} \label{eqn-barAN}
\bar{A}^N(t,x)=\int_0^t \bar\Upsilon^N(r,x)dr+\bar{M}^N_A(t,x)\,. 
\end{equation}
It is clear that for each $k$, $\{\bar{M}^N_{A,k}(t): t \ge 0\}$ is a square-integrable martingale with respect to 
the filtration $\sF^N_A= \{\sF^N_A(t): t\ge0\}$ where 
\begin{align*}
\sF^N_A(t) &:=  \sigma\big\{ I^N_k(0), \tilde{\tau}^N_{-j,k}: j =1,\dots, I^N_k(0), k=1,\dots,K\big\} \vee \sigma \big\{\lambda_{j,k}(\cdot), \, j \in \ZZ\setminus\{0\},  k=1,\dots, K \big\} \\
 &\qquad \vee \sigma\bigg\{ \int_0^{t'} \int_0^\infty  \bone_{u \le \Upsilon^N_k(s^{-})} Q_k(ds,du):  0 \le t' \le t, \, k=1, \dots,K \bigg\}, 
\end{align*}
and has the quadratic variation 
\begin{equation*} \label{eqn-MA-qv}
\langle \bar{M}^N_{A,k} \rangle(t) = \frac{K^N}{N}\int_0^t \bar\Upsilon^N_k(s)ds, \quad t \ge 0. 
\end{equation*}
Then, 
\begin{align}\label{estimM}
\| \bar{M}^N_A(t,\cdot)\|_1 
= \int_0^1 \bigg|\sum_{k=1}^{K^N} \bar{M}^N_{A,k}(t){\bf 1}_{\mathtt{I}_k}(x)\bigg| dx   = \frac{1}{K^N} \sum_{k=1}^{K^N} \big|\bar{M}^N_{A,k}(t) \big|\,. 
\end{align}
By Doob's inequality for submartingales,
\begin{equation*} \label{eqn-barM-supbound-conv0}
\E\bigg[\sup_{t \in [0,T]}  \big|\bar{M}^N_{A,k}(t) \big|^2 \bigg] \le  4 \E\big[\big|\bar{M}^N_{A,k}(T) \big|^2\big] = 4
\E\left[  \frac{K^N}{N} \int_0^T \bar\Upsilon^N_k(s)ds\right] \le   4 \lambda^*C_B C_\beta  T  \frac{K^N}{N} \,.
\end{equation*}
Since $ \frac{K^N}{N} \to 0$ as $N\to\infty$, the last inequality entails that as $N\to\infty$,
\[ \sup_{1\le k\le K^N} \E\bigg[\sup_{t \in [0,T]}  \big|\bar{M}^N_{A,k}(t) \big|^2 \bigg]\to0.\]
This combined with \eqref{estimM} implies that \eqref{MAto0} holds. 

Note that the above computations, combined with \eqref{eqn-barAN} and \eqref{eqn-int-Upsilon-bound-1}, yield \eqref{momentestim}.

Finally \eqref{eqn-barA-Upsilon-int} follows directly from \eqref{eqn-barAN} and \eqref{MAto0}. 
\end{proof}

We finally show that $ \Delta^{N}_{1,1}(t,\cdot)$ and $ \Delta^{N}_{1,2}(t,\cdot)$ tend to $0$.

\begin{lemma} \label{lem-mfN1-conv} 
Under Assumptions \ref{AS-LLN-1}, \ref{AS-LLN-2} and \ref{AS-lambda},   as $N\to \infty$, both $ \Delta^{N}_{1,1}(t,\cdot)$ and $ \Delta^{N}_{1,2}(t,\cdot)$ defined in \eqref{eqn-Delta-11} and \eqref{eqn-Delta-12} converge to zero 
in $L^1([0,1])$ in probability, locally uniformly in $t$. 
\end{lemma}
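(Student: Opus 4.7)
The plan is to apply Theorem \ref{thm-D-conv-x} to each of $\Delta^N_{1,1}$ and $\Delta^N_{1,2}$, verifying condition (i) via second-moment estimates that exploit the vanishing factor $K^N/N$, and condition (ii) by splitting the time increment into a ``past'' piece and a ``new arrivals'' piece on $(t,t+u]$.

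\textbf{The term $\Delta^N_{1,1}$.} Conditional on the $\sigma$-algebra generated by $\{A^N_k(t),\tau^N_{j,k}\}$, the summands $\lambda_{j,k}(t-\tau^N_{j,k}) - \bar\lambda(t-\tau^N_{j,k})$ are independent and centered with variance $v(t-\tau^N_{j,k})\le v^*:=\sup_{s\le T}v(s)\le(\lambda^*)^2$. Jensen's inequality for the $L^1$ norm combined with independence across $k$ will give
\begin{align*}
\E[\|\Delta^N_{1,1}(t,\cdot)\|_1^2] \le \frac{1}{K^N}\sum_{k=1}^{K^N}\Big(\frac{K^N}{N}\Big)^2 \E\bigg[\sum_{j=1}^{A^N_k(t)} v(t-\tau^N_{j,k})\bigg] \le v^* \frac{K^N}{N}\E[\|\bar A^N(t,\cdot)\|_1] \to 0
\end{align*}
by \eqref{momentestim}, settling (i). For (ii), I will decompose $\Delta^N_{1,1}(t+u,x) - \Delta^N_{1,1}(t,x) = \Xi^N_{\rm past}(t,u,x) + \Xi^N_{\rm new}(t,u,x)$, where $\Xi^N_{\rm past}$ gathers terms with $\tau^N_{j,k}\le t$ and $\Xi^N_{\rm new}$ those with $\tau^N_{j,k}\in(t,t+u]$. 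Since $\lambda,\bar\lambda\le\lambda^*$, the second piece is dominated pathwise by $2\lambda^*\sum_k (K^N/N)[A^N_k(t+u)-A^N_k(t)]\bone_{\mathtt I_k}(x)$, whose $\sup_{u\in[0,\delta]}\|\cdot\|_1$ is controlled through \eqref{eqn-barAN}, \eqref{eqn-int-Upsilon-bound} and \eqref{MAto0}. The first piece will be treated exactly as $\Delta^{N,(1)}_{0,1}(t,u)$ in Lemma \ref{lem-mfN0-conv}, using the piecewise decomposition \eqref{eqn-lambda-assump}, the common modulus $\varphi$, and the probability that a jump time $\zeta^\ell_{j,k}$ falls in a short interval of length $\delta$.

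\textbf{The term $\Delta^N_{1,2}$.} For each fixed $t$, the process $r\mapsto (K^N/N)\int_0^r\bar\lambda(t-s)\int_0^\infty\bone_{u'\le\Upsilon^N_k(s^-)}\bar Q_k(ds,du')$ is a square-integrable martingale in $r$ with predictable quadratic variation $(K^N/N)^2\int_0^r\bar\lambda(t-s)^2\Upsilon^N_k(s)ds$. Jensen on $\|\cdot\|_1$, the bound $\bar\lambda\le\lambda^*$, and \eqref{eqn-C-beta} will then yield
\[ \E[\|\Delta^N_{1,2}(t,\cdot)\|_1^2]\le(\lambda^*)^3C_BC_\beta T\,\frac{K^N}{N}\to 0,\]
giving (i). For (ii), I will decompose $\Delta^N_{1,2}(t+u,\cdot)-\Delta^N_{1,2}(t,\cdot)$ into a ``shifted kernel'' piece on $[0,t]$ (with $\bar\lambda(t-s)$ replaced by $\bar\lambda(t+u-s)-\bar\lambda(t-s)$) and a ``new arrivals'' piece on $(t,t+u]$. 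The shifted-kernel piece will be handled via \eqref{eqn-lambda-assump} applied to $\bar\lambda$, mirroring the treatment of $\Delta^{N,(2)}_{0,1}$ in Lemma \ref{lem-mfN0-conv}. The new-arrivals piece will be estimated by a variance computation analogous to the pointwise one above, giving an $O(\delta\cdot K^N/N)$ mean-square bound for each fixed $u$.

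The principal technical obstacle is upgrading the pointwise-in-$u$ mean-square bounds for $\Delta^N_{1,2}$ to the supremum over $u\in[0,\delta]$ required by condition (ii), since the kernel $\bar\lambda(t+u-s)$ depends explicitly on $u$ and $\Delta^N_{1,2}(t+u,\cdot)$ is not a martingale in $u$. My plan is to dominate $\sup_u|\cdot|$ by a monotone-in-$u$ upper bound wherever possible (as was done in \eqref{eqn-mfN0-conv-p0}), and otherwise to apply Doob's maximal inequality to the kernel-free companion martingale $u\mapsto(K^N/N)\int_t^{t+u}\int_0^\infty\bone_{u'\le\Upsilon^N_k(s^-)}\bar Q_k(ds,du')$, combined with the uniform bound $\bar\lambda\le\lambda^*$.
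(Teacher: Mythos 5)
Your proposal matches the paper's proof in all essentials: the same Jensen-plus-conditional-independence second-moment bounds for condition (i), the same decomposition of the time increment into past-arrival and new-arrival pieces for condition (ii), and the same device of dominating the kernel increment by the monotone-in-$u$ bound $\varphi(\delta)+\lambda^*\sum_{\ell}\big(F_\ell(t+\delta-s)-F_\ell(t-s)\big)$ so that the supremum over $u\in[0,\delta]$ can be taken before estimating. The one elision is in your phrase ``treated exactly as $\Delta^{N,(1)}_{0,1}(t,u)$'': here the $\tau^N_{j,k}$ arise from the PRM $Q_k$ rather than being deterministic initial data, so the paper formalizes the variance computation for the jump-time indicators via an augmented PRM $Q_{k,\ell}(ds,dr,d\zeta)$ on $\RR_+^3$ with mean measure $ds\,dr\,F_\ell(d\zeta)$; conditioning on the arrivals and exploiting independence of the $\zeta^\ell_{j,k}$'s from them gives the same bound, so this is a presentational rather than a substantive gap, and your route (including the Doob variant for the new-arrivals piece of $\Delta^N_{1,2}$, which the paper handles by a direct $L^2$ estimate) would go through.
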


\begin{proof}

We apply Theorem \ref{thm-D-conv-x}. 
We first consider $ \Delta^{N}_{1,1}(t,x)$. 
To verify condition (i) of Theorem \ref{thm-D-conv-x}, 
we have 
\begin{align*}
\| \Delta^{N}_{1,1}(t,\cdot)\|_1 & \le \frac{1}{K^N}\sum_{k=1}^{K^N}   \frac{K^N}{N}  \left|\sum_{j=1}^{A^N_k(t)}  \left(\lambda_{j,k} (t-\tau^N_{j,k}) - \bar\lambda(t-\tau^N_{j,k})\right)\right|\,.
\end{align*}

Recall the expression of $A^N_k(t)$ in \eqref{eqn-An-k-rep} and the associated  $\Upsilon^N_k(t) $ in \eqref{eqn-upsilon}. It is clear that the summands over $k$ are not independent due to the interactions among individuals in different locations in the infection process. 
Using first Jensen's inequality, and then the fact that
for each $k$, conditional on the arrivals $\{\tau^N_{j,k}\}_j$, the summands over $j$ are independent  and centered, we have
\begin{align*}
& \E \left[ \left( \frac{1}{K^N}\sum_{k=1}^{K^N}  \frac{K^N}{N}  \left|\sum_{j=1}^{A^N_k(t)} \Big( \lambda_{j,k} (t-\tau^N_{j,k}) - \bar\lambda(t-\tau^N_{j,k})\Big) \right|\right)^2 \right] \\
& \le  \E \left[  \frac{1}{K^N}\sum_{k=1}^{K^N} \left(  \frac{K^N}{N} \sum_{j=1}^{A^N_k(t)} \Big( \lambda_{j,k} (t-\tau^N_{j,k}) - \bar\lambda(t-\tau^N_{j,k})\Big) \right)^2 \right] \\
& =  \E \left[  \frac{1}{K^N}\sum_{k=1}^{K^N}  \Big(\frac{K^N}{N}\Big)^2 \sum_{j=1}^{A^N_k(t)} \Big| \lambda_{j,k} (t-\tau^N_{j,k}) - \bar\lambda(t-\tau^N_{j,k})\Big|^2 \right]\\
&=  \E \left[  \frac{1}{K^N}\sum_{k=1}^{K^N} \Big(\frac{K^N}{N}\Big)^2 \int_0^t v(t-s) d A^N_k(s) \right] \\
& \le (\lambda^*)^2  \E \left[ \frac{1}{K^N}\sum_{k=1}^{K^N} \frac{K^N}{N}   \bar{A}^N_k(t)   \right] \\
& =  (\lambda^*)^2  \frac{K^N}{N}  \E \left[ \|\bar{A}^N(t,\cdot)\|_{1}   \right] \to0 \qasq N \to \infty\,, 
\end{align*} 
where we used $v(t) \le (\lambda^*)^2$ under Assumption \ref{AS-lambda}, and the convergence follows from 
the assumption that $\frac{K^N}{N} \to 0$ as $N\to\infty$, and \eqref{momentestim} in Lemma \ref{lem-barAn-tight}.

\smallskip

We next check condition (ii) in Theorem \ref{thm-D-conv-x} for $ \Delta^{N}_{1,1}(t,x)$.
We have 
\begin{align*}
 \Delta^{N}_{1,1}(t+u,x) -  \Delta^{N}_{1,1}(t,x) &= \sum_{k=1}^{K^N} \frac{K^N}{N} \sum_{j=1}^{A^N_k(t)} \Big( \lambda_{j,k} (t+u-\tau^N_{j,k}) - \lambda_{j,k} (t-\tau^N_{j,k}) \Big){\bf 1}_{\mathtt{I}_k}(x) \\
& \quad -   \sum_{k=1}^{K^N} \frac{K^N}{N} \sum_{j=1}^{A^N_k(t)} \Big( \bar\lambda(t+u-\tau^N_{j,k})- \bar\lambda(t-\tau^N_{j,k})\Big){\bf 1}_{\mathtt{I}_k}(x) \\
 & \quad + \sum_{k=1}^{K^N} \frac{K^N}{N} \sum_{j=A^N_k(t)+1}^{A^N_k(t+u)} \Big( \lambda_{j,k} (t+u-\tau^N_{j,k}) - \bar\lambda(t+u-\tau^N_{j,k})\Big){\bf 1}_{\mathtt{I}_k}(x)\,,
\end{align*} 
and 
\begin{align*}
 \|\Delta^{N}_{1,1}(t+u,x) -  \Delta^{N}_{1,1}(t,x) \|_1 
 & \le  \frac{1}{K^N}\sum_{k=1}^{K^N}  \frac{K^N}{N} \sum_{j=1}^{A^N_k(t)} \Big| \lambda_{j,k} (t+u-\tau^N_{j,k}) - \lambda_{j,k} (t-\tau^N_{j,k}) \Big|  \\
& \quad + \frac{1}{K^N} \sum_{k=1}^{K^N} \frac{K^N}{N} \sum_{j=1}^{A^N_k(t)} \Big| \bar\lambda(t+u-\tau^N_{j,k})- \bar\lambda(t-\tau^N_{j,k})\Big| \\
 & \quad + \frac{1}{K^N} \sum_{k=1}^{K^N} \frac{K^N}{N}  \sum_{j=A^N_k(t)+1}^{A^N_k(t+u)} \Big| \lambda_{j,k} (t+u-\tau^N_{j,k}) - \bar\lambda(t+u-\tau^N_{j,k})\Big|\\
 & =:  \Delta^{N,(1)}_{1,1}(t,u) + \Delta^{N,(2)}_{1,1}(t,u) + \Delta^{N,(3)}_{1,1}(t,u)\,. 
\end{align*}

Similar to $\Delta^{N,(1)}_{0,1}(t,u)$ in \eqref{eqn-Delta01-1-bound}, we have
\begin{align*}
\sup_{u \in [0,\delta]}\Delta^{N,(1)}_{1,1}(t,u) &\le \varphi(\delta)  \int_0^1\bar{A}^N(t,x)dx +  \lambda^* \frac{1}{K^N} \sum_{k=1}^{K^N}  \frac{K^N}{N}  \sum_{j=1}^{A^N_k(t)} \sum_{\ell=1}^\kappa  
 \bone_{t-\tau^N_{j,k} \le \zeta_{j,k}^\ell \le  t+\delta -\tau^N_{j,k} }\,.
\end{align*} 
We note that
\begin{align*}
\int_0^1\bar{A}^N(t,x)dx&= \int_0^1\int_0^t\bar{\Upsilon}^N(s,x)dsdx+\int_0^1\bar{M}^N_A(t,x)dx\\
 &\le\lambda^\ast C_B C_\beta t+\int_0^1\bar{M}^N_A(t,x)dx\,.
\end{align*}
Hence, we deduce from \eqref{MAto0} that as soon as $\delta>0$ is small enough such that $\varphi(\delta)\lambda^\ast C_B C_\beta t<\ep/6$,
\begin{equation}\label{equals0}
\limsup_{N}\frac{1}{\delta}\P\left(\varphi(\delta)  \int_0^1\bar{A}^N(t,x)dx>\ep/6\right)=0\,.
\end{equation}
 For the second term, we have
\begin{align*}
& \E\left[\left(\sum_{\ell=1}^\kappa   \frac{1}{K^N} \sum_{k=1}^{K^N}  \frac{K^N}{N} \sum_{j=1}^{A^N_k(t)}  
\bone_{t-\tau^N_{j,k} \le \zeta_{j,k}^\ell \le  t+\delta -\tau^N_{j,k} } \right)^2\right ] \\
 & \le 2 \E\left[\left(\sum_{\ell=1}^\kappa   \frac{1}{K^N} \sum_{k=1}^{K^N} \frac{K^N}{N}
 \int_0^t  \int_0^\infty \int_{t-s}^{t+\delta -s} \bone_{r \le \Upsilon^N_k(s^-)} \overline{Q}_{k,\ell}(ds,dr,d\zeta)\right)^2\right ] \\
 & \quad + 2 \E\left[\left(\sum_{\ell=1}^\kappa   \frac{1}{K^N} \sum_{k=1}^{K^N} \frac{K^N}{N} 
 \int_0^t \Big(F_\ell( t+\delta-s ) - F_\ell( t-s )\Big) \Upsilon^N_k(s) ds \right)^2\right ] 
\end{align*} 
where  $Q_{k,\ell}(ds,dr,d\zeta)$ is a PRM on $\RR_+^3$ with mean measure $dsdrF_\ell(d\zeta)$  whose projection on the first two coordinates is $Q_k$, and  $\overline{Q}_{k,\ell}(ds,dr,d\zeta)$ is the corresponding compensated PRM. 
Observe that 
\begin{align*}
&  \E\left[\left(   \sum_{\ell=1}^\kappa  \frac{1}{K^N} \sum_{k=1}^{K^N}  \frac{K^N}{N}
 \int_0^t \int_0^\infty \int_{t-s}^{t+\delta -s} \bone_{r \le \Upsilon^N_k(s^-)} \overline{Q}_{k,\ell}(ds,dr,d\zeta)\right)^2\right ] \\
 & \le \kappa \sum_{\ell=1}^\kappa \E\left[    \frac{1}{K^N} \sum_{k=1}^{K^N}  \left(  \frac{K^N}{N} 
 \int_0^t \int_0^\infty \int_{t-s}^{t+\delta -s} \bone_{r \le \Upsilon^N_k(s^-)} \overline{Q}_{k,\ell}(ds,dr,d\zeta)\right)^2\right ] \\
 & = \kappa \sum_{\ell=1}^\kappa \frac{1}{K^N} \sum_{k=1}^{K^N}  \Big(\frac{K^N}{N}\Big)^2  \E\left[  \int_0^t \Big(F_\ell( t+\delta-s ) - F_\ell( t-s )\Big) \Upsilon^N_k(s) ds \right ]  \\
 & \le  \lambda^* C_BC_\beta \kappa \sum_{\ell=1}^\kappa \frac{1}{K^N} \sum_{k=1}^{K^N}  \frac{K^N}{N}\int_0^t \Big(F_\ell( t+\delta-s ) - F_\ell( t-s )\Big)ds \\
 & \le \lambda^* C_B C_\beta \kappa^2 \delta   \frac{K^N}{N}    \to 0 \qasq N \to \infty,
\end{align*} 
where we have used the inequality
\begin{align}\label{delta}
0\le\int_0^t[F_\ell(s+\delta)-F_\ell(s)]ds\le\int_0^{t+\delta}F_\ell(s)ds-\int_0^tF_\ell(s)ds\le\delta\,,
\end{align}
and 
\begin{align*}
& \E\left[\left(\sum_{\ell=1}^\kappa \frac{1}{K^N} \sum_{k=1}^{K^N}  \frac{K^N}{N} 
 \int_0^t \Big(F_\ell( t+\delta-s ) - F_\ell( t-s )\Big) \Upsilon^N_k(s) ds \right)^2\right ] \\
& \le\kappa \sum_{\ell=1}^\kappa\frac{1}{K^N} \sum_{k=1}^{K^N}   \E\left[\left(\frac{K^N}{N} 
 \int_0^t \Big(F_\ell( t+\delta-s ) - F_\ell( t-s )\Big) \Upsilon^N_k(s) ds \right)^2\right ]  \\
 &\le \kappa ( \lambda^* C_B C_\beta  )^2  \sum_{\ell=1}^\kappa\left(\int_0^t[F_\ell(s+\delta)-F_\ell(s)]ds\right)^2\\
 &\le(\kappa\lambda^\ast C_B C_\beta \delta)^2  \,. 
\end{align*}

This combined with \eqref{equals0} shows that
\begin{align} \label{eqn-mfN1-conv11}
\limsup_{N\to\infty}\sup_{t\in [0,T]}  \frac{1}{\delta}\P \left( \sup_{0 \le u \le \delta}  \Delta^{N, (1)}_{1,1}(t,u)  > \ep/3\right)  \to 0\,\quad  \text{ as } \quad \delta\to0\, . 
\end{align}

Next, similar to $\Delta^{N,(1)}_{0,1}(t,u)$ in \eqref{eqn-Delta01-1-bound}, we have
\begin{align*}
\sup_{u \in [0,\delta]} \Delta^{N,(2)}_{1,1}(t,u) &\le \varphi(\delta)  \frac{1}{K^N} \sum_{k=1}^{K^N}  \bar{A}^N_k(t) +  \lambda^* \frac{1}{K^N} \sum_{k=1}^{K^N}  \frac{K^N}{N} \sum_{j=1}^{A^N_k(t)} \sum_{\ell=1}^\kappa  
\Big(F_\ell(t+\delta -\tau^N_{j,k} ) - F_\ell( t-\tau^N_{j,k} )\Big) \,.
\end{align*} 
Then using the same arguments leading to \eqref{eqn-mfN1-conv11}, we obtain that \eqref{eqn-mfN1-conv11} holds for $ \Delta^{N,(2)}_{1,1}(t,u) $. 

Finally, for $\Delta^{N,(3)}_{1,1}(t,u)$, we have 
\begin{align*}
\sup_{0 \le u \le \delta}\Delta^{N,(3)}_{1,1}(t,u) 
& \le  \lambda^* \frac{1}{K^N} \sum_{k=1}^{K^N}   (\bar{A}^N_k(t+\delta) - \bar{A}^N_k(t)) \\
& = \lambda^* \int_0^1 \int_t^{t+\delta} \bar{A}^N(ds, x)dx\,.
\end{align*}
So
\begin{align*}
\P\left(\sup_{0 \le u \le \delta}\Delta^{N,(3)}_{1,1}(t,u)>\ep/3\right)
&\le\frac{18(\lambda^\ast)^2}{\ep^2}\Bigg\{\E\left[\left(\int_0^1\int_t^{t+\delta}\bar{\Upsilon}^N(s,x)dsdx\right)^2\right] \\
& \qquad \qquad \qquad  +
\E\left[\|\bar{M}_A^N(t+\delta,\cdot)-\bar{M}_A^N(t,\cdot)\|_1^2\right]\Bigg\},
\end{align*}
and from \eqref{MAto0}  and \eqref{eqn-int-Upsilon-bound-1},
\begin{align*}
\limsup_{N\to\infty}\sup_{t\in [0,T]}\frac{1}{\delta}\P\left(\sup_{0 \le u \le \delta}\Delta^{N,(3)}_{1,1}(t,u)>\ep/3\right)&\le
\frac{18(\lambda^\ast)^4(C_B)^2 C_\beta^2}{\ep^2}\delta\\
&\to0,\quad\text{ as } \quad \delta\to0\,.
\end{align*}
Consequently \eqref{eqn-mfN1-conv11} holds for $ \Delta^{N,(3)}_{1,1}(t,u) $. 

Thus combining the three last results, we obtain
\begin{align} \label{eqn-mfN11-conv}
\limsup_{N\to\infty}\sup_{t\in [0,T]}  \frac{1}{\delta}\P \left( \sup_{0 \le u \le \delta}   \| \Delta^{N}_{1,1}(t+u,x) -  \Delta^{N}_{1,1}(t,x)\|_1   > \ep\right)  \to 0, \qasq \delta \to 0. 
\end{align}
Thus we have shown that $\Delta^{N}_{1,1}(t,\cdot)\to 0$ in $L^1([0,1])$ in probability, locally uniformly in $t$, as $N\to\infty$.

We now consider $\Delta^{N}_{1,2}(t,x)$. To  check condition (i) in Theorem \ref{thm-D-conv-x}, we have for each $t\le T$,  
\begin{align*}
\E\big[\| \Delta^{N}_{1,2}(t,\cdot)\|_1^2\big] &\le  \E\left[ \left( \frac{1}{K^N} \sum_{k=1}^{K^N}\frac{K^N}{N}  \int_0^t   \int_0^\infty \bar{\lambda} (t-s) {\bf 1}_{u \le \Upsilon^N_k(s) } \overline{Q}_{k}(ds, d u) \right)^2     \right]  \\
& \le   \E\left[  \frac{1}{K^N} \sum_{k=1}^{K^N} \Big(\frac{K^N}{N}\Big)^2 \left(\int_0^t   \int_0^\infty \bar{\lambda} (t-s) {\bf 1}_{u \le \Upsilon^N_k(s) } \overline{Q}_{k}(ds, d u) \right)^2     \right]  \\
& =   \E\left[  \frac{1}{K^N} \sum_{k=1}^{K^N} \Big(\frac{K^N}{N}\Big)^2 \int_0^t   \bar{\lambda} (t-s)^2 \Upsilon^N_k(s) ds     \right]  \\
& \le (\lambda^*)^2\frac{K^N}{N}   \E\left[  \frac{1}{K^N} \sum_{k=1}^{K^N}  \int_0^t   \bar\Upsilon^N_k(s) ds     \right]  \\
& \le (\lambda^*)^3 C_B C_\beta T\frac{K^N}{N}    \to 0
\end{align*} 
as $N\to \infty$. 
 To check condition (ii) in Theorem \ref{thm-D-conv-x},  we have  
\begin{align*}
&  \Delta^{N}_{1,2}(t+u,x) -  \Delta^{N}_{1,2}(t,x)
 \\&= \sum_{k=1}^{K^N}  \frac{K^N}{N} \int_0^{t+u}  \int_0^\infty \big( \bar{\lambda} (t+u-s)  -\bar{\lambda} (t-s)\big) {\bf 1}_{r \le \Upsilon^N_k(s) } \overline{Q}_{k}(ds, d r)  \,{\bf 1}_{\mathtt{I}_k}(x)  \\
 & \qquad + \sum_{k=1}^{K^N} \frac{K^N}{N}  \int_t^{t+u} \int_0^\infty   \bar{\lambda} (t-s) {\bf 1}_{r \le \Upsilon^N_k(s) } \overline{Q}_{k}(ds, d r)  \,{\bf 1}_{\mathtt{I}_k}(x)\,. 
\end{align*} 
Thus, 
\begin{align*}
& \| \Delta^{N}_{1,2}(t+u,\cdot) -  \Delta^{N}_{1,2}(t,\cdot)\|_1 \\
& \le \frac{1}{K^N} \sum_{k=1}^{K^N} \bigg| \frac{K^N}{N} \int_0^{t+u}  \int_0^\infty \big( \bar{\lambda} (t+u-s)  -\bar{\lambda} (t-s)\big) {\bf 1}_{r \le \Upsilon^N_k(s) } \overline{Q}_{k}(ds, d r)  \bigg|  \\
 & \qquad + \frac{1}{K^N}\sum_{k=1}^{K^N}  \bigg| \frac{K^N}{N}  \int_t^{t+u} \int_0^\infty   \bar{\lambda} (t-s) {\bf 1}_{r \le \Upsilon^N_k(s) } \overline{Q}_{k}(ds, d r) \bigg|  \\
 & \le  \frac{1}{K^N} \sum_{k=1}^{K^N} \frac{K^N}{N}  \int_0^{t+u}  \int_0^\infty \big| \bar{\lambda} (t+u-s)  -\bar{\lambda} (t-s)\big| {\bf 1}_{r \le \Upsilon^N_k(s) } Q_{k}(ds, d r)   \\
& \quad + \frac{1}{K^N} \sum_{k=1}^{K^N}  \frac{K^N}{N} \int_0^{t+u}  \big| \bar{\lambda} (t+u-s)  -\bar{\lambda} (t-s)\big|  \Upsilon^N_k(s)  ds  \\
& \quad +  \frac{1}{K^N}\sum_{k=1}^{K^N} \frac{K^N}{N} \int_t^{t+u} \int_0^\infty   \bar{\lambda} (t-s) {\bf 1}_{r \le \Upsilon^N_k(s) } Q_{k}(ds, d r)  \\ 
& \quad +  \frac{1}{K^N}\sum_{k=1}^{K^N}  \frac{K^N}{N} \int_t^{t+u}   \bar{\lambda} (t-s)  \Upsilon^N_k(s)  ds\,,
\end{align*}
from which we obtain 
\begin{align*}
& \sup_{0 \le u\le \delta }\| \Delta^{N}_{1,2}(t+u,\cdot) -  \Delta^{N}_{1,2}(t,\cdot)\|_1 \\
 & \le  \frac{1}{K^N} \sum_{k=1}^{K^N} \frac{K^N}{N} \int_0^{t+\delta}  \int_0^\infty  \Big[\varphi(\delta) + \lambda^* \sum_{\ell=1}^\kappa  
\Big(F_\ell(t+\delta-s ) - F_\ell( t-s) \Big)
  \Big]{\bf 1}_{r \le \Upsilon^N_k(s) } Q_{k}(ds, d r)   \\
& \quad + \frac{1}{K^N} \sum_{k=1}^{K^N} \frac{K^N}{N} \int_0^{t+\delta} \Big[\varphi(\delta) + \lambda^* \sum_{\ell=1}^\kappa  
\Big(F_\ell(t+\delta-s ) - F_\ell( t-s) \Big)
  \Big] \Upsilon^N_k(s)  ds  \\
& \quad +  \frac{1}{K^N}\sum_{k=1}^{K^N} \frac{K^N}{N} \int_t^{t+\delta} \int_0^\infty   \bar{\lambda} (t-s) {\bf 1}_{r \le \Upsilon^N_k(s) } Q_{k}(ds, d r)  \\ 
& \quad +  \frac{1}{K^N}\sum_{k=1}^{K^N}  \frac{K^N}{N}  \int_t^{t+\delta}   \bar{\lambda} (t-s)  \Upsilon^N_k(s)  ds\,. 
\end{align*}
For the first term, we have
\begin{align*}
&\E\left[ \left(\frac{1}{K^N} \sum_{k=1}^{K^N}  \frac{K^N}{N}  \int_0^{t+\delta}  \int_0^\infty  \Big[\varphi(\delta) + \lambda^* \sum_{\ell=1}^\kappa  
\Big(F_\ell(t+\delta-s ) - F_\ell( t-s) \Big)
  \Big]{\bf 1}_{r \le \Upsilon^N_k(s) } Q_{k}(ds, d r) \right)^2\right] \\
& \le 2 \E\left[ \left(\frac{1}{K^N} \sum_{k=1}^{K^N}  \frac{K^N}{N} \int_0^{t+\delta}  \int_0^\infty  \Big[\varphi(\delta) + \lambda^* \sum_{\ell=1}^\kappa  
\Big(F_\ell(t+\delta-s ) - F_\ell( t-s) \Big)
  \Big]{\bf 1}_{r \le \Upsilon^N_k(s) } \overline{Q}_{k}(ds, d r) \right)^2\right] \\
  & \quad + 2  \E\left[ \left( \frac{1}{K^N} \sum_{k=1}^{K^N} \frac{K^N}{N} \int_0^{t+\delta} \Big[\varphi(\delta) + \lambda^* \sum_{\ell=1}^\kappa  
\Big(F_\ell(t+\delta-s ) - F_\ell( t-s) \Big)
  \Big] \Upsilon^N_k(s)  ds \right)^2\right] \\
  & \le 2  \E\left[ \frac{1}{K^N} \sum_{k=1}^{K^N} \left( \frac{K^N}{N} \int_0^{t+\delta}  \int_0^\infty  \Big[\varphi(\delta) + \lambda^* \sum_{\ell=1}^\kappa  
\Big(F_\ell(t+\delta-s ) - F_\ell( t-s) \Big)
  \Big]{\bf 1}_{r \le \Upsilon^N_k(s) } \overline{Q}_{k}(ds, d r) \right)^2\right] \\
  & \quad + 2  \E\left[ \frac{1}{K^N} \sum_{k=1}^{K^N} \left(  \frac{K^N}{N}\int_0^{t+\delta} \Big[\varphi(\delta) + \lambda^* \sum_{\ell=1}^\kappa  
\Big(F_\ell(t+\delta-s ) - F_\ell( t-s) \Big)
  \Big] \Upsilon^N_k(s)  ds \right)^2\right] \\
  & \le 2  \E\left[ \frac{1}{K^N} \sum_{k=1}^{K^N} \frac{K^N}{N} \int_0^{t+\delta}    \Big[\varphi(\delta) + \lambda^* \sum_{\ell=1}^\kappa  
\Big(F_\ell(t+\delta-s ) - F_\ell( t-s) \Big)
  \Big]^2  \bar\Upsilon^N_k(s) ds  \right] \\
  & \quad + 2  \E\left[ \frac{1}{K^N} \sum_{k=1}^{K^N} \left(  \int_0^{t+\delta} \Big[\varphi(\delta) + \lambda^* \sum_{\ell=1}^\kappa  
\Big(F_\ell(t+\delta-s ) - F_\ell( t-s) \Big)
  \Big] \bar\Upsilon^N_k(s)  ds \right)^2\right] \\
  & \le 2 \frac{K^N}{N}  \lambda^* C_B C_\beta  \int_0^{t+\delta}    \Big[\varphi(\delta) + \lambda^* \sum_{\ell=1}^\kappa  
\Big(F_\ell(t+\delta-s ) - F_\ell( t-s) \Big)
  \Big]^2   ds  \\
  & \quad + 2 (\lambda^*C_BC_\beta)^2   \left(  \int_0^{t+\delta} \Big[\varphi(\delta) + \lambda^* \sum_{\ell=1}^\kappa  
\Big(F_\ell(t+\delta-s ) - F_\ell( t-s) \Big)
  \Big]  ds \right)^2 \,. 
\end{align*}
Since the integral terms can be made arbitrarily small by choosing $\delta>0$ small enough, we have that
\[\limsup_{N\to\infty}\sup_{t\in [0,T]} \P \left(\sup_{0 \le u\le \delta }  \Delta^{N, (1)}_{1,2}(t,u)  > \ep/4\right)=0\]
for $\delta>0$ small enough.
The second term is already treated above as the second component in the upper bound. The other two terms can be treated in a similar but simpler way. Thus we have shown that 
\begin{align} \label{eqn-mfN12-conv}
\limsup_{N\to\infty}\sup_{t\in [0,T]}  \frac{1}{\delta}\P \left( \sup_{0 \le u \le \delta}   \| \Delta^{N}_{1,2}(t+u,x) -  \Delta^{N}_{1,2}(t,x)\|_1   > \ep\right)  \to 0, \qasq \delta \to 0. 
\end{align}
Thus we have shown that $\Delta^{N}_{1,2}(t,\cdot)\to 0$ in $L^1([0,1])$ in probability, locally uniformly in $t$, as $N\to\infty$. 
The proof for the lemma is complete. 
\end{proof}

We now deduce the following Corollary from the results in Proposition \ref{prop-conv-S-mfF} and Lemmas  \ref{lem-barAn-tight}, \ref{lem-mfN0-conv} and \ref{lem-mfN1-conv}. 

\begin{coro} \label{coro-conv-A} 
Under Assumptions \ref{AS-LLN-1}, \ref{AS-LLN-2} and \ref{AS-lambda}, we have that  $\|\bar\Upsilon^N(t,\cdot) - \bar\Upsilon(t,\cdot)\|_1\to 0$ in probability,  locally uniformly in $t$,  as $N\to \infty$ where $\bar\Upsilon(t,x)$ is given in \eqref{eqn-barUpsilon-tx}, and thus, 
$\|\bar{A}^N(t,\cdot) - \bar{A}(t,\cdot)\|_1\to 0$ in probability, locally uniformly in $t$, as $N\to \infty$, where 
\begin{equation} \label{eqn-barA-tx}
\bar{A}(t,x)= \int_0^t\frac{\bar{S}(s,x)}{\bar{B}(x)}\int_0^1\beta(x,x')\bar{\mathfrak{F}}(s,x')dx'ds = \int_0^t \bar\Upsilon(s,x)ds\,. 
\end{equation}
\end{coro}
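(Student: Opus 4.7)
The plan is to directly invoke Proposition \ref{prop-conv-S-mfF} and then to integrate in time. For the first statement, I would reuse the add-and-subtract decomposition that already appears in the proof of Proposition \ref{prop-conv-S-mfF}: one writes $\bar{\Upsilon}^N(t,x)-\bar{\Upsilon}(t,x)$ as the sum of $\bigl(\bar{S}^N/\bar{B}^N-\bar{S}/\bar{B}\bigr)\int\beta^N\bar{\mathfrak{F}}^N$, of $(\bar{S}/\bar{B})\int\beta^N(\bar{\mathfrak{F}}^N-\bar{\mathfrak{F}})$, and of $(\bar{S}/\bar{B})\int(\beta^N-\beta)\bar{\mathfrak{F}}$, and then takes the $L^1$ norm in $x$ using the a priori bounds from Lemmas \ref{aprioriN} and \ref{apriori} together with \eqref{eqn-C-beta} and the lower bound $\bar{B}\ge c_B$ from \eqref{eqn-barB-condition}. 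Exactly as in that proof, this yields
\[
\|\bar{\Upsilon}^N(t,\cdot)-\bar{\Upsilon}(t,\cdot)\|_1 \le \bar{C}\|\bar{S}^N(t,\cdot)-\bar{S}(t,\cdot)\|_1 + \bar{C}\|\bar{B}^N-\bar{B}\|_1 + C_\beta\|\bar{\mathfrak{F}}^N(t,\cdot)-\bar{\mathfrak{F}}(t,\cdot)\|_1 + R_N(t),
\]
where $R_N(t):=\int_0^1\bigl|\int_0^1(\beta(x,y)-\beta^N(x,y))\bar{\mathfrak{F}}(t,y)dy\bigr|dx$ is deterministic.

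The first and third terms tend to zero in probability uniformly on $[0,T]$ by Proposition \ref{prop-conv-S-mfF}, and the $\bar{B}$-term vanishes deterministically by \eqref{cvinfty}. For $R_N(t)$, at each fixed $t$ the convergence \eqref{conv-beta} applied to the bounded measurable function $\phi(y)=\bar{\mathfrak{F}}(t,y)$ (bounded by $C_T$ from Lemma \ref{apriori}) yields $R_N(t)\to0$, and the uniform deterministic bound $R_N(t)\le 2C_\beta C_T$ together with continuity of $t\mapsto\bar{\mathfrak{F}}(t,\cdot)$ inherited from the integral equation \eqref{eq:SF} allows one to upgrade this to convergence locally uniform in $t$ via a standard approximation argument.

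For the convergence of $\bar{A}^N$, I would exploit the decomposition \eqref{eqn-barAN}, namely $\bar{A}^N(t,x)=\int_0^t\bar{\Upsilon}^N(s,x)ds+\bar{M}^N_A(t,x)$, together with the identity $\bar{A}(t,x)=\int_0^t\bar{\Upsilon}(s,x)ds$ from \eqref{eqn-barA-tx}. The martingale term vanishes in $L^1$ uniformly on $[0,T]$ by \eqref{MAto0}, and the triangle inequality gives
\[
\Big\|\int_0^t(\bar{\Upsilon}^N(s,\cdot)-\bar{\Upsilon}(s,\cdot))ds\Big\|_1 \le \int_0^T\|\bar{\Upsilon}^N(s,\cdot)-\bar{\Upsilon}(s,\cdot)\|_1\, ds,
\]
whose right-hand side tends to zero in probability by dominated convergence, using the uniform bound $\|\bar{\Upsilon}^N(s,\cdot)-\bar{\Upsilon}(s,\cdot)\|_1\le 2\lambda^\ast C_B C_\beta$ that follows from Lemma \ref{aprioriN} and Lemma \ref{apriori}. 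The only genuinely new point is the uniform-in-$t$ handling of the deterministic term $R_N(t)$; everything else is an immediate consequence of Proposition \ref{prop-conv-S-mfF} and the preceding lemmas.
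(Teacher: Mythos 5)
Your proposal is correct and follows essentially the same route as the paper's, but with two useful points to flag. First, a small omission: Proposition~\ref{prop-conv-S-mfF} is a \emph{conditional} statement whose conclusion requires as hypothesis that $\sup_{0\le t\le T}\Psi^N(t)\to0$ in probability. You invoke the proposition ``directly'' without discharging that hypothesis; you should note, as the paper does, that the hypothesis is supplied by combining Lemmas~\ref{lem-barAn-tight}, \ref{lem-mfN0-conv} and \ref{lem-mfN1-conv}. Second, your stated uniform bound $\|\bar{\Upsilon}^N(s,\cdot)-\bar{\Upsilon}(s,\cdot)\|_1\le 2\lambda^\ast C_B C_\beta$ has the wrong constant for the limit: from Lemma~\ref{apriori} the limit $\bar{\mathfrak{F}}$ is only bounded by $C_T$, not $\lambda^\ast C_B$, so the correct bound is of the form $\lambda^\ast C_B C_\beta + c_B^{-1} C_B C_\beta C_T$; this is immaterial for dominated convergence, which only needs some $N$- and $t$-independent bound, but is worth getting right.

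Beyond that, the argument is sound and in one place slightly more explicit than the paper. For the $\bar{A}^N$ step you replace the paper's appeal to the second Dini theorem by the elementary bound $\big\|\int_0^t(\bar{\Upsilon}^N(s,\cdot)-\bar{\Upsilon}(s,\cdot))ds\big\|_1\le\int_0^T\|\bar{\Upsilon}^N(s,\cdot)-\bar{\Upsilon}(s,\cdot)\|_1\,ds$, whose right-hand side is $t$-free, followed by dominated convergence; given the uniform a priori bound this is a clean alternative. For $\bar{\Upsilon}^N$, you make explicit that the uniform-in-$t$ vanishing of the deterministic term $R_N(t)$ is not automatic from the pointwise limit \eqref{conv-beta} but requires an equicontinuity argument, using that $|R_N(t_1)-R_N(t_2)|\le 2C_\beta\|\bar{\mathfrak{F}}(t_1,\cdot)-\bar{\mathfrak{F}}(t_2,\cdot)\|_1$ by \eqref{eqn-C-beta} together with continuity of $t\mapsto\bar{\mathfrak{F}}(t,\cdot)$. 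The paper's ``we immediately obtain the convergence'' glosses over this; your version is the more careful one, provided you spell out that the equicontinuity plus pointwise convergence plus compactness of $[0,T]$ is what upgrades $R_N(t)\to0$ to a uniform limit.
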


\begin{proof}
Combining the results in Lemmas \ref{lem-barAn-tight}, \ref{lem-mfN0-conv} and \ref{lem-mfN1-conv} we have shown that 
$\sup_{0\le t\le T}\Psi^N(t)\to0$ in probability as $N\to\infty$. Thus by Proposition \ref{prop-conv-S-mfF}, we can conclude the convergence of $\bar{S}^N(t,\cdot)$ and $\bar{\mathfrak{F}}^N(t,\cdot)$ in $L^1([0,1])$ in probability, locally uniformly in $t$. 
By the expression of $\bar\Upsilon^N(t,x)$ in \eqref{eqn-barUpsilon-n-sx}, we immediately
obtain the convergence of $\bar\Upsilon^N(t,\cdot)$. 
Then by the expression of $\bar{A}^N(t,x)$ in \eqref{eqn-barAN}, we obtain the convergence in probability of $\bar{A}^N(t,\cdot)$ to $\bar{A}(t,\cdot)$ given in \eqref{eqn-barA-tx}, as announced. The uniformity in $t$ follows from the second Dini theorem.
\end{proof}

\bigskip

\section{Proof for the Convergence of $\bar{\sI}^N(t,\mfa,x)$} \label{sec-proof-conv-I}

In this section, we prove the convergence of  $\bar{\sI}^N(t,\mfa,x)$ to  $\bar{\sI}(t,\mfa,x)$ as stated in Proposition \ref{prop-sIn-conv} below. 
Recall $\sI^N_k(t,\mfa) $ in \eqref{eqn-In-k-rep}. We write the two decomposed processes:
\begin{equation}  \label{eqn-bar-sIn-0}
\bar{\sI}^N_{0}(t,\mfa,x)  =\sum_{k=1}^{K^N}\frac{K^N}{N}  \sum_{j: -j \in \cI^N_k(0)} {\bf1}_{\eta^0_{-j,k} >t} {\bf 1}_{ \tilde{\tau}^N_{-j,k} \le (\mfa-t)^+}   {\bf 1}_{\mathtt{I}_k}(x)  =\sum_{k=1}^{K^N}\frac{K^N}{N}  \sum_{j=1}^{\sI^N_k(0,(\mfa-t)^+)} {\bf1}_{\eta^0_{-j,k} >t}  {\bf 1}_{\mathtt{I}_k}(x) \,\,,
\end{equation}
and
\begin{equation} \label{eqn-bar-sIn-1}
\bar{\sI}^N_{1}(t,\mfa,x)  =\sum_{k=1}^{K^N} \frac{K^N}{N} \sum_{j=A^N_k((t-\mfa)^+)+1}^{A^N_k(t)} {\bf1}_{\tau^N_{j,k} + \eta_{j,k} >t} {\bf 1}_{\mathtt{I}_k}(x)\,\,. 
\end{equation}

\begin{lemma} \label{lem-sIn0-conv}
Under Assumptions \ref{AS-LLN-1} and \ref{AS-lambda},  
\begin{equation} 
\|\bar{\sI}^N_0(t,\mfa, \cdot) - \bar{\sI}_0(t, \mfa, \cdot)\|_1 \to 0
\end{equation}
in probability, locally uniformly in $t$ and $\mfa$, as $N \to \infty$, where
\begin{equation} \label{eqn-bar-sI-0}
 \bar{\sI}_0(t,\mfa, x) := \int_0^{(\mfa-t)^+} \frac{F^c(\mfa'+t)}{F^c(\mfa')} \bar{\sI}(0,d \mfa', x) \,. 
\end{equation}
\end{lemma}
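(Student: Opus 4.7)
The plan is to apply the weak convergence criterion of Theorem \ref{thm-DD-conv-x} to $\bar{\sI}^N_0(t,\mfa,x) - \bar{\sI}_0(t,\mfa,x)$. Let $\sG_0^N = \sigma\{I^N_k(0), \tilde{\tau}^N_{-j,k}: 1\le j\le I^N_k(0),\ 1\le k\le K^N\}$ denote the sigma-algebra of the initial data. Using the conditional law \eqref{enq-eta0-age}, I would first write the decomposition
$$\bar{\sI}^N_0(t,\mfa,x) - \bar{\sI}_0(t,\mfa,x) = \mathcal{M}^N(t,\mfa,x) + \mathcal{R}^N(t,\mfa,x),$$
where
$$\mathcal{M}^N(t,\mfa,x) = \sum_{k=1}^{K^N}\frac{K^N}{N}\sum_{j=1}^{\sI^N_k(0,(\mfa-t)^+)}\left(\bone_{\eta^0_{-j,k} > t} - \frac{F^c(t+\tilde{\tau}^N_{-j,k})}{F^c(\tilde{\tau}^N_{-j,k})}\right)\bone_{\mathtt{I}_k}(x)$$
is a sum that is $\sG_0^N$-conditionally centered (the summands being conditionally independent both across $k$ and, within each $k$, across $j$), while
$$\mathcal{R}^N(t,\mfa,x) = \int_0^{(\mfa-t)^+}\frac{F^c(\mfa'+t)}{F^c(\mfa')}\,[\bar{\sI}^N(0,d\mfa',x) - \bar{\sI}(0,d\mfa',x)]$$
is the deterministic-looking error arising from the convergence of the initial conditions.

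For condition (i) of Theorem \ref{thm-DD-conv-x}, the convergence $\|\mathcal{R}^N(t,\mfa,\cdot)\|_1 \to 0$ in probability follows from Lemma \ref{convPort} applied with the bounded integrand $g(\mfa') = F^c(\mfa'+t)/F^c(\mfa') \le 1$ together with Assumption \ref{AS-LLN-1}. For the fluctuation term, conditioning on $\sG_0^N$, using Jensen's inequality for the $L^1([0,1])$ norm and independence/centering of the summands, I obtain
$$\E\|\mathcal{M}^N(t,\mfa,\cdot)\|_1^2 \le \E\bigg[\frac{1}{K^N}\sum_{k=1}^{K^N}\Big(\frac{K^N}{N}\Big)^2 \sI^N_k(0,(\mfa-t)^+)\bigg] \le C_B\,\frac{K^N}{N} \to 0,$$
which gives the desired convergence in probability since $K^N/N \to 0$ by \eqref{eqn-KB-condition}.

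For condition (ii), the key observation is that for each fixed $x$, $\bar{\sI}^N_0(t,\mfa,x)$ is nondecreasing in $\mfa$ and nonincreasing in $t$. Hence the suprema over $u,v\in[0,\delta]$ of the $L^1$ norms of the increments are realized at the endpoints: $\sup_{v\in[0,\delta]}\|\bar{\sI}^N_0(t,\mfa+v,\cdot) - \bar{\sI}^N_0(t,\mfa,\cdot)\|_1$ equals $\|\bar{\sI}^N_0(t,\mfa+\delta,\cdot) - \bar{\sI}^N_0(t,\mfa,\cdot)\|_1$, and similarly for $u$. For the $\mfa$-increment, dropping the indicator $\bone_{\eta^0_{-j,k} > t}$ gives
$$\|\bar{\sI}^N_0(t,\mfa+\delta,\cdot) - \bar{\sI}^N_0(t,\mfa,\cdot)\|_1 \le \|\bar{\sI}^N(0,(\mfa+\delta-t)^+,\cdot) - \bar{\sI}^N(0,(\mfa-t)^+,\cdot)\|_1,$$
which converges in probability to the corresponding expression for $\bar{\sI}(0,\cdot,\cdot)$; uniform continuity of $\mfa \mapsto \bar{\sI}(0,\mfa,x)$ (which lies in $\bC$ by Assumption \ref{AS-LLN-1}) on the compact range of interest makes this uniformly small in $(t,\mfa)\in[0,T]\times[0,S]$. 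For the $t$-increment, I split into the shrinking-range part (treated as above) and the ``recovery'' part $\sum_{j}\bone_{t < \eta^0_{-j,k} \le t+\delta}$, whose $\sG_0^N$-conditional mean is
$$\int_0^{(\mfa-t-\delta)^+}\frac{F^c(\mfa'+t) - F^c(\mfa'+t+\delta)}{F^c(\mfa')}\bar{\sI}^N(0,d\mfa',x)$$
and whose fluctuation around that mean is of order $\sqrt{K^N/N}$ by a second-moment argument identical to the one used for $\mathcal{M}^N$.

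The main obstacle is ensuring the required uniformity in $\mfa$ (resp.\ $t$) of the tightness bound in the $t$ (resp.\ $\mfa$) variable when the distribution function $F$ has jumps, since the pointwise modulus $\delta \mapsto F^c(\mfa'+t) - F^c(\mfa'+t+\delta)$ does not vanish at jumps. I would handle this by observing that, thanks to the continuity of $\mfa \mapsto \bar{\sI}(0,\mfa,x)$, atoms of $F^c(\cdot+t)$ have measure zero under $\bar{\sI}(0,d\mfa',x)$, so dominated convergence (after first passing to the limit via Lemma \ref{convPort}) yields the required smallness as $\delta \to 0$, uniformly in $t$ and $\mfa$ on compacts by a standard $\ep/\delta$ argument.
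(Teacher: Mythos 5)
Your proof follows essentially the same route as the paper's: the decomposition into a conditionally centered fluctuation term (your $\mathcal{M}^N$, the paper's $\bar{\sI}^N_{0,2}$) plus an initial-data error (your $\mathcal{R}^N = \bar{\sI}^N_{0,1} - \bar{\sI}_0$), with the fluctuation controlled by a conditional second-moment bound of order $K^N/N$ and the error handled via Lemma \ref{convPort}, and condition (ii) verified using monotonicity in $t$ and $\mfa$ together with the modulus of continuity of $\bar{\sI}(0,\cdot,x)$ and a pass-to-the-limit-then-send-$\delta\to0$ (Dini-type) argument. The exposition and bookkeeping differ slightly—the paper verifies condition (ii) separately for $\bar{\sI}^N_{0,1}$ and $\bar{\sI}^N_{0,2}$ while you treat $\bar{\sI}^N_0$ as a whole, and your remark that the modulus "does not vanish at jumps" is slightly off (it vanishes pointwise by right-continuity of $F^c$; the real subtlety is only uniformity in $t$)—but the proof is correct and matches the paper's strategy.
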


\begin{proof}
We first write 
\[
\bar{\sI}^{N}_{0}(t,\mfa, x) =\bar{\sI}^{N}_{0,1}(t,\mfa,x) + \bar{\sI}^{N}_{0,2}(t,\mfa,x)
\]
where 
\begin{align} 
\bar{\sI}^{N}_{0,1}(t,\mfa,x)  &=\sum_{k=1}^{K^N} \frac{K^N}{N}   \sum_{j=1}^{\sI^N_k(0,(\mfa-t)^+)} \frac{F^c(\tilde{\tau}^N_{-j,k}+t)}{F^c(\tilde{\tau}^N_{-j,k})} {\bf 1}_{\mathtt{I}_k}(x) = \int_0^{(\mfa-t)^+} \frac{F^c(\mfa'+t)}{F^c(\mfa')} \bar{\sI}^N(0,d \mfa', x) \,,\label{eqn-bar-sI-01} \\
\bar{\sI}^{N}_{0,2}(t,\mfa,x)  &=\sum_{k=1}^{K^N} \frac{K^N}{N}   \sum_{j=1}^{\sI^N_k(0,(\mfa-t)^+)} \bigg( {\bf1}_{\eta^0_{-j,k} >t}  -\frac{F^c(\tilde{\tau}^N_{-j,k}+t)}{F^c(\tilde{\tau}^N_{-j,k})}   \bigg){\bf 1}_{\mathtt{I}_k}(x)\,.\label{eqn-bar-sI-02}
\end{align}

We apply Theorem \ref{thm-DD-conv-x}. We first consider the process $\bar{\sI}^{N}_{0,1}(t,\mfa,x)$ and show that 
\begin{equation} \label{eqn-bar-sI-01-conv}
\|\bar{\sI}^N_{0,1}(t,\mfa, \cdot) - \bar{\sI}_{0}(t, \mfa, \cdot)\|_1 \to 0, \quad \text{in probability, locally uniformly in $t$ and $\mfa$,} 
\end{equation}
as $N \to \infty$. 
We first check condition (i) of Theorem \ref{thm-DD-conv-x}.
we have 
\begin{align*}
\bar{\sI}^{N}_{0,1}(t,\mfa,x) - \bar{\sI}_{0}(t,\mfa,x) = 
\int_0^{(\mfa-t)^+}\frac{F^c(\mfa'+t)}{F^c(\mfa')}[\bar{\sI}^N(0,d\mfa',x)-\bar{\sI}(0,d\mfa',x)]\,.
\end{align*}
Condition (i) of Theorem \ref{thm-DD-conv-x} follows from Lemma \ref{convPort} and Assumption \ref{AS-LLN-1}.

Next, we check condition (ii) of Theorem \ref{thm-DD-conv-x} for the processes $\bar{\sI}^{N}_{0,1}(t,
\mfa,x) - \bar{\sI}_{0}(t,\mfa,x)$. We verify the condition for $\bar{\sI}^{N}_{0,1}(t,
\mfa,x) $ in detail below, since the similar calculations can be done for $\bar{\sI}_{0}(t,
\mfa,x) $. Namely, we show that 
 for any $\ep>0$, and for any $T, \bar{\mfa}'>0$, as $\delta\to0$,
\begin{align} 
& \limsup_N  \sup_{t\in [0,T]} \frac{1}{\delta}  \P \bigg(  \sup_{u \in [0,\delta]}\sup_{\mfa \in [0,\infty']} \|\bar{\sI}^{N}_{0,1}(t+u,\mfa,\cdot) - \bar{\sI}^{N}_{0,1}(t,\mfa,\cdot) \|_1 > \ep\bigg) \to 0\,,  \label{eqn-sI01-conv-u}\\
& \limsup_N  \sup_{\mfa\in [0,\infty']} \frac{1}{\delta}  \P \bigg(  \sup_{v \in [0,\delta]}\sup_{t \in [0,T]} \|\bar{\sI}^{N}_{0,1}(t,\mfa+v,\cdot) -  \bar{\sI}^{N}_{0,1}(t,\mfa,\cdot) \|_1 > \ep\bigg) \to 0\,. \label{eqn-sI01-conv-v}
\end{align}

To prove \eqref{eqn-sI01-conv-u}, we have
\begin{align*}
& \bar{\sI}^{N}_{0,1}(t+u,\mfa,x) - \bar{\sI}^{N}_{0,1}(t,\mfa,x) \\
&=  \int_0^{(\mfa-t-u)^+} \frac{F^c(\mfa'+t+u)}{F^c(\mfa')} \bar{\sI}^N(0,d \mfa', x)  -  \int_0^{(\mfa-t)^+} \frac{F^c(\mfa'+t)}{F^c(\mfa')} \bar{\sI}^N(0,d \mfa', x) 
\,,
\end{align*}
and
\begin{align*}
 \big\| \bar{\sI}^{N}_{0,1}(t+u,\mfa,\cdot) - \bar{\sI}^{N}_{0,1}(t,\mfa,\cdot) \big\|_1 
& \le  \int_0^1 \int_0^{(\mfa-t-u)^+} \frac{F^c(\mfa'+t)-F^c(\mfa'+t+u)}{F^c(\mfa')} \bar{\sI}^N(0,d \mfa',x)dx  \\
& \qquad  + \int_0^1 \int_{(\mfa-t-u)^+}^{(\mfa-t)^+} \frac{F^c(\mfa'+t)}{F^c(\mfa')} \bar{\sI}^N(0,d \mfa',x)dx \,.
\end{align*}
Thus,
\begin{align*}
 \sup_{u \in [0,\delta]}\sup_{\mfa \in [0,\infty']}  \big\| \bar{\sI}^{N}_{0,1}(t+u,\mfa,\cdot) - \bar{\sI}^{N}_{0,1}(t,\mfa,\cdot) \big\|_1 
& \le \int_0^1\int_0^{(\infty'-t)^+} \frac{F^c(\mfa'+t)-F^c(\mfa'+t+\delta)}{F^c(\mfa')} \bar{\sI}^N(0,d \mfa',x)dx  \\
& \qquad  + \sup_{\mfa \in [0,\infty']} \int_0^1 \int_{(\mfa-t-\delta)^+}^{(\mfa-t)^+} \frac{F^c(\mfa'+t)}{F^c(\mfa')} \bar{\sI}^N(0,d \mfa',x)dx \,. 
\end{align*}
Thanks to Lemma \ref{convPort} and Assumption \ref{AS-LLN-1}, the first term on the right converges in probability as $N \to \infty$ to 
\[
\int_0^1 \int_0^{(\infty'-t)^+} \frac{F^c(\mfa'+t)-F^c(\mfa'+t+\delta)}{F^c(\mfa')} \bar{\sI}(0,d \mfa',x)dx\,,
\]
which converges to zero as $\delta \to 0$.  It follows from the uniform convergence established in Lemma \ref{convPort} that
the second term on the right converges in probability as $N\to\infty$,  to 
\[
\sup_{\mfa \in [0,\infty']} \int_0^1\int_{(\mfa-t-\delta)^+}^{(\mfa-t)^+} \frac{F^c(\mfa'+t)}{F^c(\mfa')} \bar{\sI}(0,d \mfa',x)dx \le \sup_{\mfa \in [0,\infty']} \int_0^1\int_{(\mfa-t-\delta)^+}^{(\mfa-t)^+} \bar{\sI}(0,d \mfa',x)dx\,.
\]
Under Assumption \ref{AS-LLN-1}, it is clear that the upper bound converges to zero at $\delta \to 0$. Thus we have shown that for $\ep>0$, if $\delta>0$ is small enough,
\[
\limsup_N  \sup_{t\in [0,T]}  \P \bigg(  \sup_{u \in [0,\delta]}\sup_{\mfa \in [0,\infty]} \|\bar{\sI}^{N}_{0,1}(t+u,\mfa,\cdot) - \bar{\sI}^{N}_{0,1}(t,\mfa,\cdot) \|_1 > \ep\bigg) = 0\,.
\]
To prove \eqref{eqn-sI01-conv-v}, we have
\begin{align*}
 \bar{\sI}^{N}_{0,1}(t,\mfa+v,x) - \bar{\sI}^{N}_{0,1}(t,\mfa,x) =  \int_0^1 \int_{(\mfa-t)^+}^{(\mfa+v-t)^+} \frac{F^c(\mfa'+t)}{F^c(\mfa')} \bar{\sI}^N(0,d \mfa', x)dx\,,
\end{align*} 
and
\begin{align*}
  \sup_{v \in [0,\delta]}\sup_{t \in [0,T]} \big\|\bar{\sI}^{N}_{0,1}(t,\mfa+v,\cdot) - \bar{\sI}^{N}_{0,1}(t,\mfa,\cdot)\|_1 \le   \sup_{t \in [0,T]}\int_0^1 \int_{(\mfa-t)^+}^{(\mfa+\delta-t)^+} \frac{F^c(\mfa'+t)}{F^c(\mfa')} \bar{\sI}^N(0,d \mfa', x)dx\,.
\end{align*} 
In order to show that the $\sup_t$ on the above right hand side converges in probability, as $N\to\infty$, to
\begin{align}\label{limsup}
  \sup_{t \in [0,T]} \int_0^1 \int_{(\mfa-t)^+}^{(\mfa+\delta-t)^+} \frac{F^c(\mfa'+t)}{F^c(\mfa')} \bar{\sI}(0,d \mfa', x)dx \le 
    \sup_{t \in [0,T]} \int_0^1 \int_{(\mfa-t)^+}^{(\mfa+\delta-t)^+}  \bar{\sI}(0,d \mfa', x)dx\,,
\end{align}
it suffices to show that the convergence of $\int_0^1 \int_{(\mfa-t)^+}^{(\mfa+\delta-t)^+} \frac{F^c(\mfa'+t)}{F^c(\mfa')} \bar{\sI}^N(0,d \mfa', x)dx$ is uniform in $t$. Indeed, we note that
\begin{align*}
\int_0^1 \int_{(\mfa-t)^+}^{(\mfa+\delta-t)^+} &\frac{F^c(\mfa'+t)}{F^c(\mfa')} \bar{\sI}^N(0,d \mfa', x)dx\\
&=\int_0^1 \int_0^{(\mfa+\delta-t)^+} \frac{F^c(\mfa'+t)}{F^c(\mfa')} \bar{\sI}^N(0,d \mfa', x)dx -
\int_0^{(\mfa-t)^+}\frac{F^c(\mfa'+t)}{F^c(\mfa')} \bar{\sI}^N(0,d \mfa', x)dx\,.
\end{align*}
This right hand side is the difference of two non--increasing functions of $t$ which converge pointwise to their limit in probability, as $N\to\infty$, and both limits are continuous in $t$. Hence the uniform convergence follows from the second Dini theorem, exactly as in the proof of Lemma \ref{convPort}. Going back to \eqref{limsup}, we note that,
 under Assumption \ref{AS-LLN-1}, the right hand side converges to zero at $\delta \to 0$. 
Thus we have shown that for $\ep>0$, if $\delta>0$ is small enough,
\[
\limsup_N  \sup_{\mfa\in [0,\infty]}  \P \bigg(    \sup_{v \in [0,\delta]}\sup_{t \in [0,T]}  \|\bar{\sI}^{N}_{0,1}(t,\mfa+v,\cdot) - \bar{\sI}^{N}_{0,1}(t,\mfa,\cdot) \|_1 > \ep\bigg) = 0\,.
\]
Thus we have verified condition (ii) of Theorem \ref{thm-DD-conv-x} for the processes $\bar{\sI}^{N}_{0,1}(t,
\mfa,x)$, and with a similar argument for $\bar{\sI}_{0}(t,\mfa,x)$, and thus, for the difference $ \bar{\sI}^{N}_{0,1}(t,\mfa,x)- \bar{\sI}_{0}(t,\mfa,x)$. Therefore, the claim on the convergence of $\bar{\sI}^{N}_{0,1}(t,\mfa,x)$  in \eqref{eqn-bar-sI-01-conv} is proved. 

We next prove the convergence of $\bar{\sI}^{N}_{0,2}(t,\mfa,x)$: 
\begin{equation} \label{eqn-bar-sI-02-conv}
\|\bar{\sI}^N_{0,2}(t,\mfa, \cdot)\|_1 \to 0, \quad \text{in probability, locally uniformly in $t$ and $\mfa$, as $N \to \infty$.} 
\end{equation}
To check condition (i) of Theorem \ref{thm-DD-conv-x}, we have 
\begin{align*}
\|\bar{\sI}^{N}_{0,2}(t,\mfa,\cdot)\|_1 \le \frac{1}{K^N}\sum_{k=1}^{K^N} \Bigg|\frac{K^N}{N}   \sum_{j=1}^{\sI^N_k(0,(\mfa-t)^+)} \bigg( {\bf1}_{\eta^0_{-j,k} >t}  -\frac{F^c(\tilde{\tau}^N_{-j,k}+t)}{F^c(\tilde{\tau}^N_{-j,k})}   \bigg) \Bigg| \,.
\end{align*}
We deduce from Jensen's inequality that 
\begin{align} \label{eqn-bar-sI-02-conv-p1}
& \E\Bigg[ \Bigg(  \frac{1}{K^N}\sum_{k=1}^{K^N}\frac{K^N}{N} \Bigg|  \sum_{j=1}^{\sI^N_k(0,(\mfa-t)^+)} \bigg( {\bf1}_{\eta^0_{-j,k} >t}  -\frac{F^c(\tilde{\tau}^N_{-j,k}+t)}{F^c(\tilde{\tau}^N_{-j,k})}   \bigg)\Bigg| \Bigg)^2 \Bigg] \non \\
& \le   \frac{1}{K^N}\sum_{k=1}^{K^N}   \frac{K^N}{N}  \E \Bigg[ \int_0^{(\mfa-t)^+} \frac{F^c(\mfa'+t)}{F^c(\mfa')}  \Big( 1-  \frac{F^c(\mfa'+t)}{F^c(\mfa')} \Big) \bar{\sI}^N_k(0, d\mfa')  \  \Bigg] \,,
\end{align} 
where we have used the fact that the $\eta^0_{-j,k}$'s are conditionally independent, given the $\tilde{\tau}^N_{-j,k}$'s.
Note that under Assumption \ref{AS-LLN-1}, thanks to Lemma \ref{convPort}, as $N\to\infty$, in probability,
\begin{align*}
\frac{1}{K^N}\sum_{k=1}^{K^N}    &\int_0^{(\mfa-t)^+} \frac{F^c(\mfa'+t)}{F^c(\mfa')}  \Big( 1-  \frac{F^c(\mfa'+t)}{F^c(\mfa')} \Big) \bar{\sI}^N_k(0, d\mfa')\\
 &=\int_0^1  \int_0^{(\mfa-t)^+} \frac{F^c(\mfa'+t)}{F^c(\mfa')}  \Big( 1-  \frac{F^c(\mfa'+t)}{F^c(\mfa')} \Big) \bar{\sI}^N(0, d\mfa',x)dx\\
& \to \int_0^1  \int_0^{(\mfa-t)^+} \frac{F^c(\mfa'+t)}{F^c(\mfa')}  \Big( 1-  \frac{F^c(\mfa'+t)}{F^c(\mfa')} \Big) \bar{\sI}(0, d\mfa',x)dx \,. 
\end{align*}  
Thus, the upper bound in \eqref{eqn-bar-sI-02-conv-p1} 
converges to zero as $N\to \infty$. 
This implies that for any $\ep>0$, 
\[
\sup_{t \in [0,T]}\sup_{\mfa\in [0,\infty]} \P \big( \|\bar{\sI}^{N}_{0,2}(t,\mfa,\cdot)\|_1> \ep \big) \to 0 \qasq N \to \infty. 
\]
Next, to check condition (ii) of Theorem \ref{thm-DD-conv-x}, we show that 
 for any $\ep>0$, and for any $T, \bar{\mfa}'>0$, as $\delta\to0$,
\begin{align} 
& \limsup_N  \sup_{t\in [0,T]} \frac{1}{\delta}  \P \bigg(  \sup_{u \in [0,\delta]}\sup_{\mfa \in [0,\infty']}\big \|\bar{\sI}^{N}_{0,2}(t+u,\mfa,\cdot) - \bar{\sI}^{N}_{0,2}(t,\mfa,\cdot) \big\|_1 > \ep\bigg) \to 0\,,  \label{eqn-sI02-conv-u}\\
& \limsup_N  \sup_{\mfa\in [0,\infty]} \frac{1}{\delta}  \P \bigg(  \sup_{v \in [0,\delta]}\sup_{t \in [0,T]} \big\|\bar{\sI}^{N}_{0,2}(t,\mfa+v,\cdot) -  \bar{\sI}^{N}_{0,2}(t,\mfa,\cdot) \big\|_1 > \ep\bigg) \to 0\,. \label{eqn-sI02-conv-v}
\end{align}
To prove \eqref{eqn-sI02-conv-u}, we have
\begin{align*}
&\bar{\sI}^{N}_{0,2}(t+u,\mfa,x) - \bar{\sI}^{N}_{0,2}(t,\mfa,x) \\
& = \sum_{k=1}^{K^N}\frac{K^N}{N}   \sum_{j=1}^{\sI^N_k(0,(\mfa-t-u)^+)} \bigg( {\bf1}_{t <\eta^0_{-j,k} \le t+u}  -\frac{F^c(\tilde{\tau}^N_{-j,k}+t)-F^c(\tilde{\tau}^N_{-j,k}+t+u)}{F^c(\tilde{\tau}^N_{-j,k})}   \bigg){\bf 1}_{\mathtt{I}_k}(x)\\
& \quad - \sum_{k=1}^{K^N}\frac{K^N}{N}  \sum_{j=\sI^N_k(0,(\mfa-t-u)^+)+1}^{\sI^N_k(0,(\mfa-t)^+)} \bigg( {\bf1}_{\eta^0_{-j,k} >t} -\frac{F^c(\tilde{\tau}^N_{-j,k}+t)}{F^c(\tilde{\tau}^N_{-j,k})}   \bigg){\bf 1}_{\mathtt{I}_k}(x)\,,
\end{align*}
and
\begin{align} \label{eqn-sI02-conv-u-p1}
&\big\|\bar{\sI}^{N}_{0,2}(t+u,\mfa,\cdot) - \bar{\sI}^{N}_{0,2}(t,\mfa,\cdot) \big\|_1  \non \\
& \le \frac{1}{K^N}\sum_{k=1}^{K^N} \Bigg|\frac{K^N}{N}   \sum_{j=1}^{\sI^N_k(0,(\mfa-t-u)^+)} \bigg( {\bf1}_{t <\eta^0_{-j,k} \le t+u}  -\frac{F^c(\tilde{\tau}^N_{-j,k}+t)-F^c(\tilde{\tau}^N_{-j,k}+t+u)}{F^c(\tilde{\tau}^N_{-j,k})}   \bigg) \Bigg| \non \\
& \quad + \frac{1}{K^N} \sum_{k=1}^{K^N} \Bigg|\frac{K^N}{N}   \sum_{j=\sI^N_k(0,(\mfa-t-u)^+)+1}^{\sI^N_k(0,(\mfa-t)^+)} \bigg( {\bf1}_{\eta^0_{-j,k} >t} -\frac{F^c(\tilde{\tau}^N_{-j,k}+t)}{F^c(\tilde{\tau}^N_{-j,k})}   \bigg) \Bigg| \non \\  
& \le \frac{1}{K^N}\sum_{k=1}^{K^N}  \frac{K^N}{N}   \sum_{j=1}^{\sI^N_k(0,(\mfa-t-u)^+)} {\bf1}_{t <\eta^0_{-j,k} \le t+u}  \non \\
& \quad + \frac{1}{K^N}\sum_{k=1}^{K^N} \frac{K^N}{N}   \sum_{j=1}^{\sI^N_k(0,(\mfa-t-u)^+)}  \frac{F^c(\tilde{\tau}^N_{-j,k}+t)-F^c(\tilde{\tau}^N_{-j,k}+t+u)}{F^c(\tilde{\tau}^N_{-j,k})} \non \\
& \quad +  \frac{1}{K^N} \sum_{k=1}^{K^N} \Big(\bar{\sI}^N_k(0,(\mfa-t)^+) - \bar{\sI}^N_k(0,(\mfa-t-u)^+) \Big) \,. 
\end{align} 
For the first term on the right, we have
\begin{align} \label{eqn-sI02-conv-u-p2}
& \P\Bigg(  \sup_{u \in [0,\delta]}\sup_{\mfa \in [0,\infty']}    \frac{1}{K^N} \sum_{k=1}^{K^N}\frac{K^N}{N}   \sum_{j=1}^{\sI^N_k(0,(\mfa-t-u)^+)} {\bf1}_{t <\eta^0_{-j,k} \le t+u}  > \ep \Bigg) \non\\
& \le  \P\Bigg(   \frac{1}{K^N} \sum_{k=1}^{K^N}\frac{K^N}{N} \sum_{j=1}^{\sI^N_k(0,(\infty'-t)^+)} {\bf1}_{t <\eta^0_{-j,k} \le t+\delta}  > \ep \Bigg) \non \\
& \le \P\Bigg(    \frac{1}{K^N} \sum_{k=1}^{K^N} \frac{K^N}{N}   \sum_{j=1}^{\sI^N_k(0,(\infty-t)^+)}  \bigg({\bf1}_{t <\eta^0_{-j,k} \le t+\delta}-\frac{F^c(\tilde{\tau}^N_{-j,k}+t)-F^c(\tilde{\tau}^N_{-j,k}+t+\delta)}{F^c(\tilde{\tau}^N_{-j,k})}   \bigg) > \ep/2 \Bigg) \non \\
& \quad + \P\Bigg(   \frac{1}{K^N} \sum_{k=1}^{K^N}\frac{K^N}{N}   \sum_{j=1}^{\sI^N_k(0,(\infty-t)^+)} \frac{F^c(\tilde{\tau}^N_{-j,k}+t)-F^c(\tilde{\tau}^N_{-j,k}+t+\delta)}{F^c(\tilde{\tau}^N_{-j,k})} >  \ep/2 \Bigg)\,.
\end{align}
Here using Jensen's inequality and the fact that the summands over $j$ are independent, conditionally upon the 
$\tilde{\tau}^N_{-j,k}$'s, the first probability is bounded by
\begin{align} \label{eqn-sI02-conv-u-p3}
& \frac{4}{\ep^2} \E \Bigg[ \bigg( \frac{1}{K^N} \sum_{k=1}^{K^N}\frac{K^N}{N}    \sum_{j=1}^{\sI^N_k(0,(\infty-t)^+)}  \bigg({\bf1}_{t <\eta^0_{-j,k} \le t+\delta}-\frac{F^c(\tilde{\tau}^N_{-j,k}+t)-F^c(\tilde{\tau}^N_{-j,k}+t+\delta)}{F^c(\tilde{\tau}^N_{-j,k})}   \bigg) \bigg)^2 \Bigg] \non \\
& \le \frac{K^N}{N}    \frac{4}{\ep^2}  \E \int_0^1  \int_0^{(\infty-t)^+}
\frac{F^c(\mfa'+t)-F^c(\mfa'+t+\delta)}  {F^c(\mfa')} \bar{\sI}^N(0, d \mfa', x) dx.
\end{align}
Now under Assumption \ref{AS-LLN-1}, it follows from Lemma \ref{convPort} that
\begin{align*}
&\int_0^1 \int_0^{(\infty-t)^+}
\frac{F^c(\mfa'+t)-F^c(\mfa'+t+\delta)}  {F^c(\mfa')} \bar{\sI}^N(0, d \mfa',x)dx \\
& \to  \int_0^1 \int_0^{(\infty-t)^+}
\frac{F^c(\mfa'+t)-F^c(\mfa'+t+\delta)}  {F^c(\mfa')} \bar{\sI}(0, d \mfa',x)dx
\end{align*} 
in probability as $N \to \infty$. Hence the upper bound in \eqref{eqn-sI02-conv-u-p3} converges to zero, as $N\to\infty$. Inside the second probability in \eqref{eqn-sI02-conv-u-p2}, we have 
\begin{align*}
&  \frac{1}{K^N} \sum_{k=1}^{K^N} \frac{K^N}{N}    \sum_{j=1}^{\sI^N_k(0,(\infty-t)^+)} \frac{F^c(\tilde{\tau}^N_{-j,k}+t)-F^c(\tilde{\tau}^N_{-j,k}+t+\delta)}{F^c(\tilde{\tau}^N_{-j,k})} \\
&= \int_0^1 \int_0^{(\infty-t)^+}   \frac{F^c(\mfa'+t)-F^c(\mfa'+t+\delta)}{F^c(\mfa')}\bar{\sI}^N(0, d \mfa',x)dx   \\
& \to \int_0^1 \int_0^{(\infty-t)^+}   \frac{F^c(\mfa'+t)-F^c(\mfa'+t+\delta)}{F^c(\mfa')}\bar{\sI}(0, d \mfa',x)dx 
\end{align*}
in probability as $N \to \infty$, again from Lemma \ref{convPort}, and the limit converges to zero as $\delta\to 0$. Hence for any $\ep>0$, if $\delta>0$ is small enough,
$\limsup_N$ of the second term in the right hand side of \eqref{eqn-sI02-conv-u-p2} is zero.

For the second term on the right of \eqref{eqn-sI02-conv-u-p1}, we have
\begin{align*}
&\sup_{u \in [0,\delta]}\sup_{\mfa \in [0,\infty']}  \frac{1}{K^N}\sum_{k=1}^{K^N}   \frac{K^N}{N}   \sum_{j=1}^{\sI^N_k(0,(\mfa-t-u)^+)}  \frac{F^c(\tilde{\tau}^N_{-j,k}+t)-F^c(\tilde{\tau}^N_{-j,k}+t+u)}  {F^c(\tilde{\tau}^N_{-j,k})} \\
&  \le  \int_0^1  \int_0^{(\infty'-t)^+}
\frac{F^c(\mfa'+t)-F^c(\mfa'+t+\delta)}  {F^c(\mfa')} \bar{\sI}^N(0, d \mfa',x)dx
\end{align*}
which, thanks to Lemma \ref{convPort} and Assumption \ref{AS-LLN-1}, converges in probability as $N\to \infty$, to 
\begin{align*}
& \int_0^1 \int_0^{(\infty-t)^+}
\frac{F^c(\mfa'+t)-F^c(\mfa'+t+\delta)}  {F^c(\mfa')} \bar{\sI}(0, d \mfa', x)dx.
\end{align*}
This expression will also converge to zero as $\delta \to 0$. 
For the third term on the right of \eqref{eqn-sI02-conv-u-p1},
we have 
\begin{align*}
 & \sup_{u \in [0,\delta]} \int_0^1 \Big(\bar{\sI}^N(0,(\mfa-t)^+,x) - \bar{\sI}^N(0,(\mfa-t-u)^+,x) \Big)dx \\
 & \le  \int_0^1 \Big(\bar{\sI}^N(0,(\mfa-t)^+,x) - \bar{\sI}^N(0,(\mfa-t-\delta)^+,x) \Big) dx
\end{align*}
which converges in probability  to 
\[
\int_0^1  \Big(\bar{\sI}(0,(\mfa-t)^+,x) - \bar{\sI}(0,(\mfa-t-\delta)^+,x) \Big) dx
\]
as $N\to\infty$. Since $\bar{\sI}^N(0,\cdot,x)$ and $\bar{\sI}(0,\cdot,x)$ are nondecreasing and the limit is continuous, the convergence also holds uniformly over $\mfa\in [0,\infty']$. 
Moreover, we also have that
\[
 \sup_{\mfa \in [0,\infty]} \int_0^1  \Big(\bar{\sI}(0,(\mfa-t)^+,x) - \bar{\sI}(0,(\mfa-t-\delta)^+,x) \Big) dx\to0,
\]
as $\delta \to 0$. 
Combining the results on the three terms on the right of \eqref{eqn-sI02-conv-u-p1}, we have shown that \eqref{eqn-sI02-conv-u} holds.


We next prove \eqref{eqn-sI02-conv-v}. We have 
\begin{align*}
\bar{\sI}^{N}_{0,2}(t,\mfa+v,x) - \bar{\sI}^{N}_{0,2}(t,\mfa,x) =  \sum_{k=1}^{K^N} \frac{K^N}{N}  \sum_{j=\sI^N_k(0,(\mfa-t)^++1}^{\sI^N_k(0,(\mfa+v-t)^+)} \bigg( {\bf1}_{\eta^0_{-j,k} >t+u}  -\frac{F^c(\tilde{\tau}^N_{-j,k}+t)}{F^c(\tilde{\tau}^N_{-j,k})}   \bigg){\bf 1}_{\mathtt{I}_k}(x)\,,
\end{align*}
and
\begin{align*}
\big\|\bar{\sI}^{N}_{0,2}(t,\mfa+v,\cdot) - \bar{\sI}^{N}_{0,2}(t,\mfa,\cdot)\|_1 
&\le \frac{1}{K^N}  \sum_{k=1}^{K^N} \Bigg|\frac{K^N}{N}   \sum_{j=\sI^N_k(0,(\mfa-t)^++1}^{\sI^N_k(0,(\mfa+v-t)^+)} \bigg( {\bf1}_{\eta^0_{-j,k} >t+u}  -\frac{F^c(\tilde{\tau}^N_{-j,k}+t)}{F^c(\tilde{\tau}^N_{-j,k})}   \bigg) \Bigg| \\
& \le \frac{1}{K^N}  \sum_{k=1}^{K^N}  \big|\bar{\sI}^N_k(0,(\mfa+v-t)^+) - \bar{\sI}^N_k(0,(\mfa-t)^+ \big| \,. 
\end{align*}
Thus, 
\begin{align*}
& \sup_{v \in [0,\delta]}\sup_{t \in [0,T]} \big\|\bar{\sI}^{N}_{0,2}(t,\mfa+v,\cdot) - \bar{\sI}^{N}_{0,2}(t,\mfa,\cdot)\|_1 \\
  &\le \sup_{t \in [0,T]} \frac{1}{K^N}  \sum_{k=1}^{K^N}  \Big(\bar{\sI}^N_k(0,(\mfa+\delta-t)^+) - \bar{\sI}^N_k(0,(\mfa-t)^+) \Big) \\
&=\sup_{t \in [0,T]}\int_0^1\Big(\bar{\sI}^N(0,(\mfa+\delta-t)^+,x) - \bar{\sI}^N(0,(\mfa-t)^+,x) \Big) dx
\end{align*} 
and we claim that the right hand side converges in probability as $N\to\infty$, to 
\[
 \sup_{t \in [0,T]} \int_0^1  \Big( \bar{\sI}(0,(\mfa+\delta-t)^+,x) - \bar{\sI}(0,(\mfa-t)^+ ,x)\Big)dx \,.
\]
Indeed, the convergence without the $\sup_t$ follows from Assumption \ref{AS-LLN-1}, and both
$t\mapsto \int_0^1\bar{\sI}^N(0,(\mfa+\delta-t)^+,x)dx$ and $t\mapsto \int_0^1\bar{\sI}^N(0,(\mfa-t)^+,x)dx$
are non--increasing, while the limits are continuous. Hence again an application of the second Dini theorem 
implies that the convergence is locally uniform in $t$, hence the claim.
The limit then converges to zero as $\delta \to 0$.  Thus we have shown  \eqref{eqn-sI02-conv-v}. This completes the proof of the lemma. 
\end{proof}

\begin{lemma}  \label{lem-sIn1-conv}
Under Assumptions \ref{AS-LLN-1}, \ref{AS-LLN-2} and \ref{AS-lambda},  
\begin{equation} 
\|\bar{\sI}^N_1(t,\mfa,\cdot) - \bar{\sI}_1(t,\mfa,\cdot)\|_1 \to 0
\end{equation}
in probability, locally uniformly in $t$ and $\mfa$, as $N \to \infty$, where
\begin{equation}\label{eqn-bar-sI-1}
 \bar{\sI}_1(t,\mfa,x) :=  \int_{(t-\mfa)^+}^t F^c(t-s)  \bar{A}(ds,x) \,, 
\end{equation}
where $\bar{A}(t,x)$ is given in \eqref{eqn-barA-tx}. 
\end{lemma}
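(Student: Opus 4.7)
The plan is to parallel the structure of Lemma \ref{lem-sIn0-conv}, splitting $\bar{\sI}^N_1$ into a ``compensator'' piece driven by $\bar{A}^N$ and a conditionally centered ``fluctuation'' piece. Specifically, write $\bar{\sI}^N_1(t,\mfa,x)=\bar{\sI}^N_{1,1}(t,\mfa,x)+\bar{\sI}^N_{1,2}(t,\mfa,x)$, where
\begin{align*}
\bar{\sI}^N_{1,1}(t,\mfa,x) &= \sum_{k=1}^{K^N}\frac{K^N}{N}\sum_{j=A^N_k((t-\mfa)^+)+1}^{A^N_k(t)} F^c(t-\tau^N_{j,k})\,{\bf 1}_{\mathtt{I}_k}(x)=\int_{(t-\mfa)^+}^t F^c(t-s)\,\bar{A}^N(ds,x),\\
\bar{\sI}^N_{1,2}(t,\mfa,x) &= \sum_{k=1}^{K^N}\frac{K^N}{N}\sum_{j=A^N_k((t-\mfa)^+)+1}^{A^N_k(t)}\bigl({\bf1}_{\tau^N_{j,k}+\eta_{j,k}>t}-F^c(t-\tau^N_{j,k})\bigr){\bf 1}_{\mathtt{I}_k}(x).
\end{align*}
Then I would verify the two conditions of Theorem \ref{thm-DD-conv-x} for each piece.

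For $\bar{\sI}^N_{1,1}$, the convergence $\|\bar{\sI}^N_{1,1}(t,\mfa,\cdot)-\bar{\sI}_1(t,\mfa,\cdot)\|_1\to 0$ follows from Corollary \ref{coro-conv-A} together with Lemma \ref{convPort}, applied with $f_N(t,x)=\bar{A}^N(t,x)$, $f(t,x)=\bar A(t,x)$ and $g(s)=F^c(t-s)\mathbf{1}_{((t-\mfa)^+,t]}(s)$; this handles condition (i). For condition (ii), the increments are handled using $|F^c(t+u-s)-F^c(t-s)|$, the uniform bound $\bar A^N(t,x)\le C_B+\|\bar M^N_A(t,\cdot)\|_1$ available from \eqref{momentestim} and Lemma \ref{lem-barAn-tight}, and the continuity in $t$ and $\mfa$ of the limit $\int_{(t-\mfa)^+}^t F^c(t-s)\bar A(ds,x)$; the increment in $\mfa$ only modifies the lower limit of integration and is controlled by $\bar A^N(t,x)-\bar A^N((t-\mfa)^+,x)$, which converges locally uniformly thanks to the second Dini theorem as in Lemma \ref{lem-sIn0-conv}.

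For $\bar{\sI}^N_{1,2}$, I would exploit the key fact that, conditionally on $\sF^N_A(\infty)$ (the filtration defined in the proof of Lemma \ref{lem-barAn-tight}) and on the $\tau^N_{j,k}$'s, the variables ${\bf1}_{\tau^N_{j,k}+\eta_{j,k}>t}-F^c(t-\tau^N_{j,k})$ are independent across $j$ and centered with variance bounded by $F^c(t-\tau^N_{j,k})(1-F^c(t-\tau^N_{j,k}))\le 1$. Using Jensen's inequality on the outer sum over $k$ and this conditional independence, I obtain
$$\E\bigl[\|\bar{\sI}^N_{1,2}(t,\mfa,\cdot)\|_1^2\bigr]\le \frac{K^N}{N}\,\E\Bigl[\|\bar A^N(t,\cdot)-\bar A^N((t-\mfa)^+,\cdot)\|_1\Bigr]\le C_T\,\frac{K^N}{N}\to 0,$$
which yields condition (i) uniformly in $(t,\mfa)\in[0,T]\times[0,\bar\mfa']$. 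For condition (ii), I would split each increment in $t$ (resp.\ in $\mfa$) into three parts as in \eqref{eqn-sI02-conv-u-p1}: the indicators that flip from $1$ to $0$ between $t$ and $t+u$, the increments of the conditional probabilities $F^c(t-\tau^N_{j,k})-F^c(t+u-\tau^N_{j,k})$, and the newly added summands corresponding to $A^N_k(t+u)-A^N_k(t)$ (respectively $A^N_k((t-\mfa)^+)-A^N_k((t-\mfa-v)^+)$). Each of these is again controlled by the same $L^2$ argument combined with Corollary \ref{coro-conv-A} and the continuity of $\bar A(t,x)$ in $t$, yielding the desired $\frac1\delta\P(\cdots>\epsilon)\to 0$ bounds.

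The main obstacle I anticipate is condition (ii) for $\bar{\sI}^N_{1,2}$: unlike in Lemma \ref{lem-sIn0-conv}, the number of summands depends on $A^N_k$, which itself depends on the whole history of the system through $\bar\Upsilon^N$. Care must therefore be taken to choose the right filtration with respect to which the $\eta_{j,k}$'s are conditionally independent of everything else, so that the conditional second-moment computations are valid; the filtration $\sF^N_A$ introduced in Lemma \ref{lem-barAn-tight}, enlarged by the $\eta^0_{-j,k}$'s, is the natural candidate. Once this conditioning is set up, the bounds on $\int_0^1(\bar A^N(t+u,x)-\bar A^N(t,x))dx$ from \eqref{eqn-int-Upsilon-bound} and \eqref{MAto0} close the argument exactly as in the proofs above.
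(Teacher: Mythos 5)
Your proposal uses the identical decomposition $\bar{\sI}^N_1=\bar{\sI}^N_{1,1}+\bar{\sI}^N_{1,2}$ and the same arsenal (Lemma \ref{convPort} and Corollary \ref{coro-conv-A} for the compensator piece; Jensen plus conditional independence and the factor $K^N/N\to0$ for the fluctuation piece; a three-way split of increments and the PRM/compensated-PRM martingale machinery for condition (ii)), so it is essentially the paper's proof. The concern you raise about choosing the right filtration is resolved in the paper not by enlarging $\sF^N_A$ (which already contains the $\lambda_{j,k}$'s, hence the $\eta_{j,k}$'s) but by representing the indicators through the marked PRMs $Q_{k,\ell}(ds,dr,dz)$ with mean measure $ds\,dr\,F_\ell(dz)$, so that the centered quantities are stochastic integrals against the compensated measures $\overline{Q}_{k,\ell}$ and the second-moment identity is just the PRM isometry.
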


\begin{proof}
We first write 
\[
\bar{\sI}^{N}_{1}(t,\mfa,x) = \bar{\sI}^{N}_{1,1}(t,\mfa,x) + \bar{\sI}^{N}_{1,2}(t,\mfa,x)
\]
where 
\begin{align}
\bar{\sI}^{N}_{1,1}(t,\mfa,x) &=\sum_{k=1}^{K^N} \frac{K^N}{N} \sum_{j=A^N_k((t-\mfa)^+)+1}^{A^N_k(t)}  F^c(t-\tau^N_{j,k} ){\bf 1}_{\mathtt{I}_k}(x)  =\int _{(t-\mfa)^+}^t F^c(t-s)  \bar{A}^N(ds,x)  \,\,,\label{eqn-bar-sI-11}\\
\bar{\sI}^{N}_{1,2}(t,\mfa,x) & =\sum_{k=1}^{K^N} \frac{K^N}{N}  \sum_{j=A^N_k((t-\mfa)^+)+1}^{A^N_k(t)} \Big( {\bf1}_{\tau^N_{j,k} + \eta_{j,k} >t} - F^c(t-\tau^N_{j,k} )\Big){\bf 1}_{\mathtt{I}_k}(x)\,\,. \label{eqn-bar-sI-12}
\end{align}
We apply Theorem \ref{thm-DD-conv-x}. 
We start with the process $\bar{\sI}^{N}_{1,1}(t,\mfa,x)$ and show that 
\begin{equation} \label{eqn-bar-sI-11-conv}
\big\|\bar{\sI}^N_{1,1}(t,\mfa, \cdot) - \bar{\sI}_{1}(t, \mfa, \cdot)\big\|_1 \to 0, \quad \text{in probability, locally uniformly in $t$ and $\mfa$,} 
\end{equation}
as $N \to \infty$. 
Since
\begin{align*}
\bar{\sI}^N_{1,1}(t,\mfa, x) - \bar{\sI}_{1}(t, \mfa, x)
&= \int _{(t-\mfa)^+}^t F^c(t-s)  \Big(\bar{A}^N(ds,x) - \bar{A}(ds,x)    \Big) ,
\end{align*} 
 condition (i) of Theorem \ref{thm-DD-conv-x} follows from Lemma \ref{convPort} and Corollary \ref{coro-conv-A}.
In other words, we have that for each $t$ and $\mfa$, and for any $\epsilon>0$, 
\[\P(\|\bar{\sI}^{N}_{1,1}(t,\mfa,\cdot) - \bar{\sI}_{1}(t,\mfa,\cdot) \|_1>\epsilon) \to 0 \qasq N \to \infty.
\]

We next want to check (ii) of Theorem \ref{thm-DD-conv-x} for the processes $\bar{\sI}^{N}_{1,1}(t,
\mfa,x) - \bar{\sI}_{1}(t,\mfa,x)$. 
We will verify the following conditions for $\bar{\sI}^{N}_{1,1}(t,
\mfa,x) $:  
 for any $\ep>0$, and for any $T, \bar{\mfa}'>0$, as $\delta\to0$,
\begin{align} 
& \limsup_N  \sup_{t\in [0,T]} \frac{1}{\delta}  \P \bigg(  \sup_{u \in [0,\delta]}\sup_{\mfa \in [0,\infty']} \|\bar{\sI}^{N}_{1,1}(t+u,\mfa,\cdot) - \bar{\sI}^{N}_{1,1}(t,\mfa,\cdot) \|_1 > \ep\bigg) \to 0\,,  \label{eqn-sI11-conv-u}\\
& \limsup_N  \sup_{\mfa\in [0,\infty']} \frac{1}{\delta}  \P \bigg(  \sup_{v \in [0,\delta]}\sup_{t \in [0,T]} \|\bar{\sI}^{N}_{1,1}(t,\mfa+v,\cdot) -  \bar{\sI}^{N}_{1,1}(t,\mfa,\cdot) \|_1 > \ep\bigg) \to 0\,. \label{eqn-sI11-conv-v}
\end{align}
It will be clear that the same results hold (and are simpler to prove) for $\bar{\sI}_{1}(t,\mfa,\cdot)$.
To prove \eqref{eqn-sI11-conv-u}, we have
\begin{align*}
& \bar{\sI}^{N}_{1,1}(t+u,\mfa,x) - \bar{\sI}^{N}_{1,1}(t,\mfa,x) \\
&=  \int _{(t+u-\mfa)^+}^{t+u} F^c(t+u-s)  \bar{A}^N(ds,x)  -  \int _{(t-\mfa)^+}^t F^c(t-s) \bar{A}^N(ds,x)  \\
&=  \int _{(t-\mfa)^+}^{t+u} \Big(F^c(t+u-s) - F^c(t-s) \Big) \bar{A}^N(ds,x)   \\
& \quad -    \int _{(t-\mfa)^+}^{(t+u-\mfa)^+} F^c(t+u-s)  \bar{A}^N(ds,x) 
+ \int _{t}^{t+u} F^c(t-s)  \bar{A}^N(ds,x) \,,
\end{align*} 
and
\begin{align}\label{eqn-sI11-conv-u-p1}
& \big\|\bar{\sI}^{N}_{1,1}(t+u,\mfa,\cdot) - \bar{\sI}^{N}_{1,1}(t,\mfa,\cdot) \big\|_1 \non\\
&\le \int_0^1 \int _{(t-\mfa)^+}^{t+u} \Big(F^c(t-s) - F^c(t+u-s) \Big)  \bar{A}^N(ds,x)dx   \non\\
& \quad +    \int _{(t-\mfa)^+}^{(t+u-\mfa)^+} F^c(t+u-s) \bar{A}^N(ds,x)dx   
+  \int _{t}^{t+u} F^c(t-s) \bar{A}^N(ds,x)dx \,.
\end{align} 
Here the first term on the right satisfies
\begin{align*}
\sup_{u \in [0,\delta]}\sup_{\mfa \in [0,\infty']} & \int_0^1\int _{(t-\mfa)^+}^{t+u} \Big(F^c(t-s) - F^c(t+u-s) \Big)  \bar{A}^N(ds,x)dx   \\
& \le \int_0^1\int _{(t-\infty')^+}^{t+\delta} \Big(F^c(t-s) - F^c(t+\delta-s) \Big) \bar{A}^N(ds,x)dx   \\
& \to \int_0^1   \int _{(t-\infty')^+}^{t+\delta} \Big(F^c(t-s) - F^c(t+\delta-s) \Big)  \bar{A}(ds,x)dx  
\end{align*}
in probability as $N\to\infty$ by Lemma  \ref{lem-barAn-tight} and Corollary \ref{coro-conv-A}, and the limit converges to zero as $\delta \to 0$. The second term on the right side of \eqref{eqn-sI11-conv-u-p1} satisfies 
\begin{align*}
&\sup_{u \in [0,\delta]}\sup_{\mfa \in [0,\infty']}  \int_0^1 \int _{(t-\mfa)^+}^{(t+u-\mfa)^+} F^c(t+u-s)  \bar{A}^N(ds,x)dx  \\
& \le \sup_{\mfa \in [0,\infty']}   \int_0^1 \Big( \bar{A}^N((t+\delta-\mfa)^+,x)  -  \bar{A}^N((t-\mfa)^+,x)  \Big)dx \\
& \to \sup_{\mfa \in [0,\infty']}  \int_0^1  \Big( \bar{A}((t+\delta-\mfa)^+,x)  -  \bar{A}((t-\mfa)^+,x)  \Big)dx 
\end{align*} 
in probability as $N\to\infty$ by Corollary  \ref{coro-conv-A} and the second Dini theorem, and the limit converges to zero as $\delta \to 0$. 
 The third term on the right side of \eqref{eqn-sI11-conv-u-p1} does not depend on $\mfa$ and satisfies 
\begin{align*}
&\sup_{u \in [0,\delta]} \int_0^1  \int _{t}^{t+u} F^c(t-s) \bar{A}^N(ds,x)dx \\
& \le   \int_0^1\Big( \bar{A}^N(t+\delta,x)  -  \bar{A}^N(t,x)  \Big)dx \to   \int_0^1  \Big( \bar{A}(t+\delta,x)  -  \bar{A}(t,x)  \Big)dx 
\end{align*} 
in probability as $N\to\infty$ by Corollary  \ref{coro-conv-A}, and the limit converges to zero as $\delta \to 0$. 
Thus we have shown that for small enough $\delta>0$, 
 for any $\ep>0$, and for any $T, \bar{\mfa}'>0$, 
\[
 \limsup_N  \sup_{t\in [0,T]}  \P \bigg(  \sup_{u \in [0,\delta]}\sup_{\mfa \in [0,\infty']} \|\bar{\sI}^{N}_{1,1}(t+u,\mfa,\cdot) - \bar{\sI}^{N}_{1,1}(t,\mfa,\cdot) \|_1 > \ep\bigg) = 0\,.
\]

To prove \eqref{eqn-sI11-conv-v}, we have
\begin{align*}
 \bar{\sI}^{N}_{1,1}(t,\mfa+v,x) - \bar{\sI}^{N}_{1,1}(t,\mfa,x)  =   \int _{(t-\mfa-v)^+}^{(t-\mfa)^+} F^c(t-s)  \bar{A}^N(ds,x)\,,
\end{align*} 
and
\begin{align*}
\big\| \bar{\sI}^{N}_{1,1}(t,\mfa+v,\cdot) - \bar{\sI}^{N}_{1,1}(t,\mfa,\cdot) \big\|_1  = \int_0^1  \int _{(t-\mfa-v)^+}^{(t-\mfa)^+} F^c(t-s)  \bar{A}^N(ds,x)dx \,.
\end{align*} 
Hence,
\begin{align*}
& \sup_{v \in [0,\delta]}\sup_{t \in [0,T]}  \big\| \bar{\sI}^{N}_{1,1}(t,\mfa+v,\cdot) - \bar{\sI}^{N}_{1,1}(t,\mfa,\cdot) \big\|_1  \\
& \le \sup_{t \in [0,T]}   \int_0^1 \Big( \bar{A}^N((t-\mfa)^+,x)  -  \bar{A}^N((t-\mfa-\delta)^+,x)  \Big) dx\\
& \to \sup_{t \in [0,T]} \int_0^1 \Big( \bar{A}((t-\mfa)^+,x)  -  \bar{A}((t-\mfa-\delta)^+,x)  \Big) dx\, 
\end{align*} 
in probability as $N\to\infty$ by Corollary  \ref{coro-conv-A} and again the second Dini theorem. Moreover, the limit converges to zero as $\delta \to 0$. 
Thus we have shown that for small enough $\delta>0$, 
 for any $\ep>0$, and for any $T, \bar{\mfa}'>0$, 
\[
\limsup_N  \sup_{\mfa\in [0,\infty']} \P \bigg(  \sup_{v \in [0,\delta]}\sup_{t \in [0,T]} \|\bar{\sI}^{N}_{1,1}(t,\mfa+v,\cdot) -  \bar{\sI}^{N}_{1,1}(t,\mfa,\cdot) \|_1 > \ep\bigg) = 0\,. 
\]
Therefore, combining the above, we have proved the convergence of  $\bar{\sI}^{N}_{1,1}(t,\mfa,x)$ as stated in \eqref{eqn-bar-sI-11-conv}.  
\smallskip 

We next consider the process $\bar{\sI}^{N}_{1,2}(t,\mfa,x)$ and show that 
\begin{equation} \label{eqn-bar-sI-12-conv}
\big\|\bar{\sI}^N_{1,2}(t,\mfa, \cdot) \big\|_1 \to 0, \quad \text{in probability, locally uniformly in $t$ and $\mfa$, as $N \to \infty$.} 
\end{equation}

To check condition (i) of Theorem \ref{thm-DD-conv-x}, we have
\begin{align*}
\big\|\bar{\sI}^N_{1,2}(t,\mfa, \cdot)\|_1 
&= \frac{1}{K^N}\sum_{k=1}^{K^N} \bigg| \frac{K^N}{N}   \sum_{j=A^N_k((t-\mfa)^+)+1}^{A^N_k(t)} \Big( {\bf1}_{\tau^N_{j,k} + \eta_{j,k} >t} - F^c(t-\tau^N_{j,k} )\Big)\bigg|\,,
\end{align*} 
and
\begin{align*}
\E \big[ \big\|\bar{\sI}^N_{1,2}(t,\mfa, \cdot)\|_1^2  \big] 
&=\E \Bigg[ \Bigg( \frac{1}{K^N}\sum_{k=1}^{K^N} \bigg| \frac{K^N}{N}  \sum_{j=A^N_k((t-\mfa)^+)+1}^{A^N_k(t)} \Big( {\bf1}_{\tau^N_{j,k} + \eta_{j,k} >t} - F^c(t-\tau^N_{j,k} )\Big)\bigg| \Bigg)^2 \Bigg] \\
& \le \E \Bigg[  \frac{1}{K^N}\sum_{k=1}^{K^N} \Bigg( \frac{K^N}{N}  \sum_{j=A^N_k((t-\mfa)^+)+1}^{A^N_k(t)} \Big( {\bf1}_{\tau^N_{j,k} + \eta_{j,k} >t} - F^c(t-\tau^N_{j,k} )\Big)  \Bigg)^2 \Bigg] \\
& =  \E \Bigg[  \frac{1}{K^N}\sum_{k=1}^{K^N}   \Big(\frac{K^N}{N}\Big)^2  \sum_{j=A^N_k((t-\mfa)^+)+1}^{A^N_k(t)} F(t-\tau^N_{j,k} ) F^c(t-\tau^N_{j,k} )  \Bigg] \\
& \le  \frac{K^N}{N} \E \left[  \int_0^1 \int_{(t-\mfa)^+}^{t} F(t-s ) F^c(t-s ) \bar{A}^N(ds,x)dx \right]\, .
\end{align*} 
By Corollary \ref{coro-conv-A} and Lemma \ref{convPort}, we obtain the convergence 
\[
\int_0^1 
\int_{(t-\mfa)^+}^{t} F(t-s ) F^c(t-s ) \bar{A}^N(ds,x)dx
\to \int_0^1 \int_{(t-\mfa)^+}^{t} F(t-s ) F^c(t-s )\bar{A}(ds,x)dx
 \]
 in probability as $N\to \infty$.
 This implies that for any $\ep>0$, 
\[
\sup_{t \in [0,T]}\sup_{\mfa\in [0,\infty']} \P \big( \|\bar{\sI}^{N}_{1,2}(t,\mfa,\cdot)\|_1> \ep \big) \to 0 \qasq N \to \infty. 
\]
Next, to check condition (ii) of Theorem \ref{thm-DD-conv-x}, we need to show that
 for any $\ep>0$, and for any $T, \bar{\mfa}'>0$, as $\delta\to0$,
\begin{align} 
& \limsup_N  \sup_{t\in [0,T]} \frac{1}{\delta}  \P \bigg(  \sup_{u \in [0,\delta]}\sup_{\mfa \in [0,\infty']} \|\bar{\sI}^{N}_{1,2}(t+u,\mfa,\cdot) - \bar{\sI}^{N}_{1,2}(t,\mfa,\cdot) \|_1 > \ep\bigg) \to 0\,,  \label{eqn-sI12-conv-u}\\
& \limsup_N  \sup_{\mfa\in [0,\infty']} \frac{1}{\delta}  \P \bigg(  \sup_{v \in [0,\delta]}\sup_{t \in [0,T]} \|\bar{\sI}^{N}_{1,2}(t,\mfa+v,\cdot) -  \bar{\sI}^{N}_{1,2}(t,\mfa,\cdot) \|_1 > \ep\bigg) \to 0\,. \label{eqn-sI12-conv-v}
\end{align}
To prove \eqref{eqn-sI12-conv-u}, we have
\begin{align*}
& \bar{\sI}^{N}_{1,2}(t+u,\mfa,x) - \bar{\sI}^{N}_{1,2}(t,\mfa,x)  \\
& = \sum_{k=1}^{K^N} \frac{K^N}{N}   \sum_{j=A^N_k((t+u-\mfa)^+)+1}^{A^N_k(t+u)} \Big( {\bf1}_{\tau^N_{j,k} + \eta_{j,k} >t+u} - F^c(t+u-\tau^N_{j,k} )\Big){\bf 1}_{\mathtt{I}_k}(x) \\
& \qquad  - \sum_{k=1}^{K^N} \frac{K^N}{N}  \sum_{j=A^N_k((t-\mfa)^+)+1}^{A^N_k(t)} \Big( {\bf1}_{\tau^N_{j,k} + \eta_{j,k} >t} - F^c(t-\tau^N_{j,k} )\Big){\bf 1}_{\mathtt{I}_k}(x) \\
&=   \sum_{k=1}^{K^N}\frac{K^N}{N}  \sum_{j=A^N_k((t-\mfa)^+)+1}^{A^N_k(t+u)} \Big( {\bf1}_{\tau^N_{j,k} + \eta_{j,k} >t+u} - F^c(t+u-\tau^N_{j,k} )\Big){\bf 1}_{\mathtt{I}_k}(x) \\
& \qquad -  \sum_{k=1}^{K^N} \frac{K^N}{N}   \sum_{j=A^N_k((t-\mfa)^+)+1}^{A^N_k((t+u-\mfa)^+)} \Big( {\bf1}_{\tau^N_{j,k} + \eta_{j,k} >t+u} - F^c(t+u-\tau^N_{j,k} )\Big){\bf 1}_{\mathtt{I}_k}(x) \\
& \qquad  - \sum_{k=1}^{K^N} \frac{K^N}{N}  \sum_{j=A^N_k((t-\mfa)^+)+1}^{A^N_k(t+u)} \Big( {\bf1}_{\tau^N_{j,k} + \eta_{j,k} >t} - F^c(t-\tau^N_{j,k} )\Big){\bf 1}_{\mathtt{I}_k}(x) \\ 
& \qquad  + \sum_{k=1}^{K^N} \frac{K^N}{N}  \sum_{j=A^N_k(t)+1}^{A^N_k(t+u)} \Big( {\bf1}_{\tau^N_{j,k} + \eta_{j,k} >t} - F^c(t-\tau^N_{j,k} )\Big){\bf 1}_{\mathtt{I}_k}(x)\\
& =  -\sum_{k=1}^{K^N} \frac{K^N}{N}  \sum_{j=A^N_k((t-\mfa)^+)+1}^{A^N_k(t+u)} \Big( {\bf1}_{t <\tau^N_{j,k} + \eta_{j,k} \le t+u} - \big(  F^c(t-\tau^N_{j,k} )-F^c(t+u-\tau^N_{j,k} )\big)\Big){\bf 1}_{\mathtt{I}_k}(x) \\
& \qquad -  \sum_{k=1}^{K^N} \frac{K^N}{N}   \sum_{j=A^N_k((t-\mfa)^+)+1}^{A^N_k((t+u-\mfa)^+\wedge t)} \Big( {\bf1}_{\tau^N_{j,k} + \eta_{j,k} >t+u} - F^c(t+u-\tau^N_{j,k} )\Big){\bf 1}_{\mathtt{I}_k}(x) \\
& \qquad  + \sum_{k=1}^{K^N} \frac{K^N}{N}  \sum_{j=A^N_k(t)+1}^{A^N_k(t+u)} \Big( {\bf1}_{\tau^N_{j,k} + \eta_{j,k} >t} - F^c(t-\tau^N_{j,k} )\Big){\bf 1}_{\mathtt{I}_k}(x)\,.
\end{align*} 
Thus we obtain 
\begin{align}\label{eqn-sI12-conv-u-p1} 
& \big\|\bar{\sI}^{N}_{1,2}(t+u,\mfa,\cdot) - \bar{\sI}^{N}_{1,2}(t,\mfa,\cdot) \big\|_1 \non \\
& \le  \frac{1}{K^N}\sum_{k=1}^{K^N} \bigg| \frac{K^N}{N}  \sum_{j=A^N_k((t-\mfa)^+)+1}^{A^N_k(t+u)} \Big( {\bf1}_{t <\tau^N_{j,k} + \eta_{j,k} \le t+u} - \big( F^c(t-\tau^N_{j,k}) - F^c(t+u-\tau^N_{j,k} )\big)\Big)\bigg|  \non \\
& \quad +  \frac{1}{K^N} \sum_{k=1}^{K^N} \bigg| \frac{K^N}{N}  \sum_{j=A^N_k((t-\mfa)^+\wedge t)+1}^{A^N_k((t+u-\mfa)^+} \Big( {\bf1}_{\tau^N_{j,k} + \eta_{j,k} >t+u} - F^c(t+u-\tau^N_{j,k} )\Big) \bigg|  \non \\
& \quad  +\frac{1}{K^N}   \sum_{k=1}^{K^N} \bigg| \frac{K^N}{N}  \sum_{j=A^N_k(t)+1}^{A^N_k((t+u)^+\wedge t)} \Big( {\bf1}_{\tau^N_{j,k} + \eta_{j,k} >t} - F^c(t-\tau^N_{j,k} )\Big)\bigg| \non \\
& \le  \frac{1}{K^N}\sum_{k=1}^{K^N} \frac{K^N}{N} \sum_{j=A^N_k((t-\mfa)^+)+1}^{A^N_k(t+u)}  {\bf1}_{t <\tau^N_{j,k} + \eta_{j,k} \le t+u}  \non  \\
& \quad+ \frac{1}{K^N}\sum_{k=1}^{K^N}\frac{K^N}{N}  \sum_{j=A^N_k((t-\mfa)^+)+1}^{A^N_k(t+u)}    \big( F^c(t-\tau^N_{j,k} ) -  F^c(t+u-\tau^N_{j,k} )\big) \non \\
& \quad+ \frac{1}{K^N}   \sum_{k=1}^{K^N}  \big( \bar{A}^N_k(t+u) - \bar{A}^N_k(t) \big)+ \frac{1}{K^N}   \sum_{k=1}^{K^N}  \big( \bar{A}^N_k((t+u-\mfa)^+) - \bar{A}^N_k( (t-\mfa)^+) \big) \,. 
\end{align} 
For the first term on the right, we have
\begin{align*}
  & \E\Bigg[  \Bigg( \sup_{u \in [0,\delta]}\sup_{\mfa \in [0,\infty']} \frac{1}{K^N}\sum_{k=1}^{K^N} \frac{K^N}{N}   \sum_{j=A^N_k((t-\mfa)^+)+1}^{A^N_k(t+u)}  {\bf1}_{t <\tau^N_{j,k} + \eta_{j,k} \le t+u} \Bigg)^2 \Bigg] \\
  & \le  \E\Bigg[  \Bigg(  \frac{1}{K^N}\sum_{k=1}^{K^N} \frac{K^N}{N}  \sum_{j=A^N_k((t-\infty')^+)+1}^{A^N_k(t+\delta)}  {\bf1}_{t <\tau^N_{j,k} + \eta_{j,k} \le t+\delta} \Bigg)^2 \Bigg] \\
  & \le  \E\Bigg[  \frac{1}{K^N}\sum_{k=1}^{K^N}  \Bigg(  \frac{K^N}{N}  \sum_{j=A^N_k((t-\infty')^+)+1}^{A^N_k(t+\delta)}  {\bf1}_{t <\tau^N_{j,k} + \eta_{j,k} \le t+\delta} \Bigg)^2 \Bigg] \\
  & \le 2  \E\Bigg[  \frac{1}{K^N}\sum_{k=1}^{K^N}  \Bigg( \frac{K^N}{N} \int_{(t-\infty')^+}^{t+\delta} \int_0^\infty \int_{t-s}^{t+\delta-s} {\bf1}_{r\le \Upsilon^N(s^-)} \overline{Q}_{k,\ell}(ds,dr,dz)   \Bigg)^2 \Bigg] \\
  & \quad + 2  \E\Bigg[  \frac{1}{K^N}\sum_{k=1}^{K^N}  \Bigg(   \frac{K^N}{N}  \int_{(t-\infty')^+}^{t+\delta} 
  (F(t+\delta-s)-F(t-s)) \Upsilon^N_k(s) ds  \Bigg)^2 \Bigg] \\
  & = 2  \frac{K^N}{N}  \E\Bigg[  \frac{1}{K^N}\sum_{k=1}^{K^N}    \int_{(t-\infty')^+}^{t+\delta} 
  (F(t+\delta-s)-F(t-s)) \bar{\Upsilon}^N_k(s) ds  \Bigg] \\
  & \quad + 2  \E\Bigg[  \frac{1}{K^N}\sum_{k=1}^{K^N}  \Bigg(   \int_{(t-\infty')^+}^{t+\delta} 
  (F(t+\delta-s)-F(t-s))  \bar{\Upsilon}^N_k(s) ds  \Bigg)^2 \Bigg] \\
  & \le 2 \lambda^*C_B C_\beta \frac{K^N}{N}     \int_{(t-\infty')^+}^{t+\delta} 
  (F(t+\delta-s)-F(t-s)) ds \\
  & \quad + 2   (\lambda^*C_BC_\beta)^2  \Bigg(   \int_{(t-\infty')^+}^{t+\delta} 
  (F(t+\delta-s)-F(t-s)) ds  \Bigg)^2\,,
\end{align*} 
where  $Q_{k,\ell}(ds,dr,dz)$ is the PRM on $\RR_+^3$ with mean measure $dsdrF(dz)$  already introduced in the proof of Lemma \ref{lem-mfN1-conv}, and  $\overline{Q}_{k,\ell}(ds,dr,dz)$ is the corresponding compensated PRM, and we have used the bound $\bar\Upsilon^N_k(t)\le \lambda^*C_BC_\beta$. 
The first term on the right goes to zero as $N\to\infty$, and the integral in the second is bounded from above by
\[ \int_{0}^{t}   \big( F(s+\delta) - F(s) \big) ds\le\delta\,,\]
as in \eqref{delta} above.
Thus we obtain that for any $\ep>0$, as $\delta\to0$,
\begin{align*}
  &  \limsup_N  \sup_{t\in [0,T]} \frac{1}{\delta}   \P\Bigg( \sup_{u \in [0,\delta]}\sup_{\mfa \in [0,\infty']} \frac{1}{K^N}\sum_{k=1}^{K^N}  \frac{K^N}{N}  \sum_{j=A^N_k((t-\mfa)^+)+1}^{A^N_k(t+u)}  {\bf1}_{t <\tau^N_{j,k} + \eta_{j,k} \le t+u} >\ep \Bigg) \to 0\,. 
  \end{align*}
For the second term on the right side of \eqref{eqn-sI12-conv-u-p1}, we have
\begin{align*}
  & \E\Bigg[  \Bigg( \sup_{u \in [0,\delta]}\sup_{\mfa \in [0,\infty']} \frac{1}{K^N}\sum_{k=1}^{K^N} \frac{1}{B^N_k}  \sum_{j=A^N_k((t-\mfa)^+)+1}^{A^N_k(t+u)}  \big( F^c(t-\tau^N_{j,k} ) -  F^c(t+u-\tau^N_{j,k} )\big)  \Bigg)^2 \Bigg] \\
  & \le  \E\Bigg[    \frac{1}{K^N}\sum_{k=1}^{K^N}  \Bigg(\frac{K^N}{N}   \sum_{j=A^N_k((t-\infty')^+)+1}^{A^N_k(t+\delta)}  \big( F^c(t-\tau^N_{j,k} ) -  F^c(t+\delta-\tau^N_{j,k} )\big)  \Bigg)^2 \Bigg]  \\
  & \le  2 \E\Bigg[   \frac{1}{K^N}\sum_{k=1}^{K^N} \Big( \frac{K^N}{N}\Big)^2 \bigg( \int_{(t-\infty')^+}^{t+\delta}   \big( F^c(t-s) -  F^c(t+\delta-s)\big) d M^N_{A,k}(s)  \bigg)^2 \Bigg] \\
  & \quad +2   \E\Bigg[   \frac{1}{K^N}\sum_{k=1}^{K^N} \bigg( \int_{(t-\infty')^+}^{t+\delta}   \big( F^c(t-s) -  F^c(t+\delta-s)\big)  \bar{\Upsilon}^N_k(s) ds  \bigg)^2 \Bigg]  \\
  & = 2 \lambda^*C_BC_\beta \frac{K^N}{N}    \int_{(t-\infty')^+}^{t+\delta}   \big( F^c(t-s) -  F^c(t+\delta-s)\big)^2  ds   \\
  & \quad +2 (\lambda^*C_BC_\beta)^2 \bigg( \int_{(t-\infty')^+}^{t+\delta}   \big( F^c(t-s) -  F^c(t+\delta-s)\big)  ds  \bigg)^2\,. 
  \end{align*} 
It is clear that the first term converge to zero locally uniformly in $t$, and  the second term can be treated in the same way above. 
The third and fourth terms on the right side of \eqref{eqn-sI12-conv-u-p1} can be also treated similarly as the last two terms in \eqref{eqn-sI11-conv-u-p1}. Thus, we have shown that \eqref{eqn-sI12-conv-u} holds.

To prove \eqref{eqn-sI12-conv-v}, we have
\begin{align*}
 \bar{\sI}^{N}_{1,2}(t,\mfa+v,x) - \bar{\sI}^{N}_{1,2}(t,\mfa,x) = \sum_{k=1}^{K^N}\frac{K^N}{N}   \sum_{j=A^N_k((t-\mfa-v)^+)+1}^{A^N_k((t-\mfa)^+)} \Big( {\bf1}_{\tau^N_{j,k} + \eta_{j,k} >t} - F^c(t-\tau^N_{j,k} )\Big){\bf 1}_{\mathtt{I}_k}(x) \,,
\end{align*}
and
\begin{align*}
\big\|\bar{\sI}^{N}_{1,2}(t,\mfa+v,\cdot) - \bar{\sI}^{N}_{1,2}(t,\mfa,\cdot)\|_1
& \le \frac{1}{K^N} \sum_{k=1}^{K^N} \bigg| \frac{K^N}{N}  \sum_{j=A^N_k((t-\mfa-v)^+)+1}^{A^N_k((t-\mfa)^+)} \Big( {\bf1}_{\tau^N_{j,k} + \eta_{j,k} >t} - F^c(t-\tau^N_{j,k} )\Big)\bigg|\\
& \le \int_0^1 \Big(\bar{A}^N((t-\mfa)^+,x) - \bar{A}^N((t-\mfa-v)^+,x) \Big)dx\,. 
\end{align*}
Then, we obtain
\begin{align*}
 & \sup_{v \in [0,\delta]}\sup_{t \in [0,T]}  \big\|\bar{\sI}^{N}_{1,2}(t,\mfa+v,\cdot) - \bar{\sI}^{N}_{1,2}(t,\mfa,\cdot)\|_1 \\
 & \qquad  \le \sup_{t \in [0,T]} \int_0^1 \Big(\bar{A}^N((t-\mfa)^+,x) - \bar{A}^N((t-\mfa-\delta)^+,x) \Big)dx\,. 
\end{align*}
Here the upper bound converges in probability to 
\[
\sup_{t \in [0,T]}\int_0^1 \Big(\bar{A}((t-\mfa)^+,x) - \bar{A}((t-\mfa-\delta)^+,x) \Big)dx
\]
which converges to zero as $\delta \to 0$, uniformly in $\mfa$.  
Indeed, the convergence of the $\sup_t$ follows from the fact that the convergence in probability
$\int_0^1\bar{A}^N(t,x)dx\to\int_0^1\bar{A}(t,x)dx$ is locally uniform in $t$, thanks to  Corollary  \ref{coro-conv-A}.
Thus we have proved \eqref{eqn-sI12-conv-v} holds, and hence, the convergence of  $\bar{\sI}^{N}_{1,2}(t,\mfa,x)$ in \eqref{eqn-bar-sI-12-conv}. 
 This completes the proof of the lemma. 
\end{proof}

By the two lemmas above, we can conclude the convergence of $\bar{\sI}^N(t,\mfa,x)$ to $\bar{\sI}(t,\mfa,x)$. 

\begin{prop}  \label{prop-sIn-conv}
Under Assumptions \ref{AS-LLN-1}, \ref{AS-LLN-2} and \ref{AS-lambda},  
\begin{equation} 
\|\bar{\sI}^N(t,\mfa,\cdot) - \bar{\sI}(t,\mfa,\cdot)\|_1 \to 0
\end{equation}
in probability, locally uniformly in $t$ and $\mfa$, as $N \to \infty$, where $ \bar{\sI}(t,\mfa,x) =  \bar{\sI}_0(t,\mfa,x) + \bar{\sI}_1(t,\mfa,x) $, 
  $\bar{\sI}_0$ and $\bar{\sI}_1$ being given respectively by \eqref{eqn-bar-sI-0} and \eqref{eqn-bar-sI-1}. 
\end{prop}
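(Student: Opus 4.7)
The plan is to exploit the natural additive decomposition already established in Section \ref{sec-proof-conv-I}. From the representation \eqref{eqn-In-k-rep} of $\sI^N_k(t,\mfa)$, and the definitions \eqref{eqn-bar-sIn-0}--\eqref{eqn-bar-sIn-1}, we have the identity
\[
\bar{\sI}^N(t,\mfa,x) = \bar{\sI}^N_0(t,\mfa,x) + \bar{\sI}^N_1(t,\mfa,x),
\qquad t,\mfa \ge 0,\ x\in[0,1].
\]
Correspondingly, setting $\bar{\sI}(t,\mfa,x) := \bar{\sI}_0(t,\mfa,x) + \bar{\sI}_1(t,\mfa,x)$ with $\bar{\sI}_0$ and $\bar{\sI}_1$ given by \eqref{eqn-bar-sI-0} and \eqref{eqn-bar-sI-1}, and using the identity $\bar{A}(t,x) = \int_0^t \bar{\Upsilon}(s,x)\,ds$ from \eqref{eqn-barA-tx}, we recover exactly the expression for $\bar{\sI}(t,\mfa,x)$ in \eqref{eqn-barsI-tax}.

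The triangle inequality in the $L^1([0,1])$ norm then gives, for every $t\ge 0$ and $\mfa\ge 0$,
\[
\|\bar{\sI}^N(t,\mfa,\cdot) - \bar{\sI}(t,\mfa,\cdot)\|_1
\le \|\bar{\sI}^N_0(t,\mfa,\cdot) - \bar{\sI}_0(t,\mfa,\cdot)\|_1
+ \|\bar{\sI}^N_1(t,\mfa,\cdot) - \bar{\sI}_1(t,\mfa,\cdot)\|_1 .
\]
By Lemma \ref{lem-sIn0-conv}, the first term on the right converges to $0$ in probability locally uniformly in $(t,\mfa)$, while by Lemma \ref{lem-sIn1-conv}, combined with Corollary \ref{coro-conv-A} which supplies the convergence of $\bar{A}^N(\cdot,\cdot)$ to $\bar{A}(\cdot,\cdot)$ needed to identify the limit $\bar{\sI}_1$, the second term also converges to $0$ in probability locally uniformly in $(t,\mfa)$. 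Summing the two convergences yields the claim.

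No essential new obstacle arises at this stage: all the analytic and probabilistic work (moment estimates, verification of the tightness-type criteria of Theorem \ref{thm-DD-conv-x}, identification of the limit using Lemma \ref{convPort} and the second Dini theorem) has been completed in Lemmas \ref{lem-sIn0-conv} and \ref{lem-sIn1-conv}, so the proof of the proposition is a direct assembly step.
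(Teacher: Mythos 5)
Your proof is correct and matches the paper's (implicit) argument: Proposition \ref{prop-sIn-conv} is stated immediately after Lemmas \ref{lem-sIn0-conv} and \ref{lem-sIn1-conv} precisely because it follows from them by the additive decomposition $\bar{\sI}^N = \bar{\sI}^N_0 + \bar{\sI}^N_1$ and the triangle inequality in $L^1([0,1])$, as you spell out. Your additional remark that $\bar{A}(t,x)=\int_0^t\bar\Upsilon(s,x)\,ds$ identifies $\bar{\sI}_0+\bar{\sI}_1$ with the expression \eqref{eqn-barsI-tax} is accurate and clarifies the consistency of the limit.
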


\medskip 

{\bf Completing the proof of Theorem \ref{thm-FLLN}}. 
Given the results in Propositions \ref{prop-conv-S-mfF} and \ref{prop-sIn-conv} and Corollary \ref{coro-conv-A}, the convergence of $\bar{R}^N(t,x)$ and $\bar{I}^N(t,x)$ can be easily established and their limits $\bar{R}(t,x)$ and $\bar{I}(t,x)$ follows directly. 
The second expression of $\bar\Upsilon(t,x)$ in \eqref{eqn-barUpsilon-tx} is obtained from $\bar{\sI}(t,\mfa,x)$ in \eqref{eqn-barsI-tax}, by noting that $\bar\sI_a(t,0,x) = \lim_{\mfa\to0} \frac{ \bar\sI(t,\mfa,x) - \bar\sI(t,0,x)}{\mfa}$.

\section*{Acknowledgement}
We thank the reviewers for the helpful comments that have led to improvements of the presentation of the paper, and the correction of one error.  G. Pang is partly funded by the NSF Grants DMS 2216765 and CMMI 2452829. 

\bigskip
\bibliographystyle{abbrv}
\bibliography{Epidemic-Age-Spatial}

\end{document}